\documentclass [12pt,a4paper, reqno,intlimits]{article}

\usepackage{amstext,amsmath,amssymb,amsthm,ascmac,cases}
\usepackage{braket}
\usepackage{amscd}
\usepackage{fancyhdr}
\usepackage[final]{pdfpages}
\usepackage{graphicx,color}

%%%%%%%%%%%
%%%%%%
%%%%%%%%%%%%%%%%%%%

  %%%%%%%%%%%%%%%%%%%%%%%%%%%

%%%%%%%%%Tikz%%%%%%%%%%%%%%%%%%%%%%%%%
%%%%This is important package and commands to draw figures, so that please don't remove the following commands.%%%%%%%%%%%%%%%%%%%%%%%%
%%%%%%%%%%%%%%%%%%%%%%%%%%%%%%%%%%%%%%
\usepackage{tikz}
\usetikzlibrary{positioning,shapes,arrows}
\usetikzlibrary{3d,calc}
\tikzstyle{na} = [baseline=-.5ex]
\usetikzlibrary{intersections,math,patterns}
\usepackage{pgfplots}
\pgfplotsset{compat=1.6}

\pgfplotsset{soldot/.style={color=blue,only marks,mark=*}}
\pgfplotsset{holdot/.style={color=blue,fill=white,only marks,mark=*}}

%%%%%%%%%%%%%%%%%%%%%%%%%%%%%%%%
%%%%%%%%%%%%%%%%%%%%%%%%%%%%%%%%

%%%%%%%%%hyperlink%%%%%%%%%%%%%%%%%%%%
%\usepackage[dvipdfmx,
%colorlinks=false,linkcolor=blue,linkbordercolor=blue, setpagesize=false]{hyperref}
%\usepackage{pxjahyper}
%\usepackage[dvipdfm,setpagesize=false]{hyperref}
 %\usepackage[setpagesize=false]{hyperref}
%\usepackage{pxjahyper}
 \usepackage[colorlinks=true,linkcolor=blue, setpagesize=false, linkbordercolor=blue]{hyperref}
    % Warning: when you first run your tex file, some errors might occur,
   % please just press enter key to end the compilation process, then it will be fine if you run your tex file again.
   % Note that it is highly recommended by AIMS to use this package.
\hypersetup{urlcolor=green, citecolor=green}
%%%%%%%%%%%%%%%%%%%%%%%%%%%%%%%%%%

\usepackage{amsgen}

%%%%%%%%%%%%
\usepackage{mathrsfs}
%%%%%%%%%%%%%%%

%---------------------Page Format----------------------
\usepackage{geometry}
\oddsidemargin=0.3truecm
\evensidemargin=0.3truecm
\textwidth=15.6truecm
\textheight=23truecm
\setlength{\topmargin}{-5mm}
%---------------------------------------------------------------

\usepackage{wrapfig}

\usepackage{enumerate}

\usepackage{dashbox}

%%%%%%%symbol%%%%%%%%%%

\DeclareMathOperator*{\esssup}{ess\,sup}

\DeclareMathOperator*{\supp}{supp}

%%%%%%%average integral symbol%%%%%%%%
\usepackage{esint}

\def\Xint#1{\mathchoice
{\XXint\displaystyle\textstyle{#1}}%
{\XXint\textstyle\scriptstyle{#1}}%
{\XXint\scriptstyle\scriptscriptstyle{#1}}%
{\XXint\scriptscriptstyle\scriptscriptstyle{#1}}%
\!\int}
\def\XXint#1#2#3{{\setbox0=\hbox{$#1{#2#3}{\int}$ }
\vcenter{\hbox{$#2#3$ }}\kern-.6\wd0}}

\def\dashint{\Xint-}

%%%%%%%%%%%%%%%%%%%%%%%%%%%%%%

%%%%%%%pagenumbering%%%%%%%%%%%%%%%%
\pagenumbering{arabic}

%%%%%%%%%%%%%%%%%%%%%%%%%%

     %%%%%%%%%
\makeatletter
  \def\thefootnote{\ifnum\c@footnote>\z@\leavevmode\lower.5ex%
      \hbox{$^{\@arabic\c@footnote)}$}\fi}
  \makeatother

%%theoremstyle%%%%%%%%%%%%%
\newtheoremstyle{mystyle}%                % Name
  {}%                                     % Space above
  {}%                                     % Space below
  {\itshape}%                                     % Body font
  {}%                                     % Indent amount
  {\bfseries}%                            % Theorem head font
  { }%                                    % Punctuation after theorem head
  { }%                                    % Space after theorem head, ' ', or \newline
  {\thmname{#1}\thmnumber{ #2}\thmnote{ (#3)}}%                                     % Theorem head spec (can be left empty, meaning `normal')

\theoremstyle{mystyle}

%%%%%%%%%%%%%%%%%%%%%%%%%%%

%%%%appendix%%%%%%
\usepackage[title, toc, titletoc]{appendix}

%%%%%%%%%%%%

%%%%%%%%displaystyle%%%%%%%%%%%%%%
\allowdisplaybreaks[4]

\newtheorem{thm}{Theorem}[section]
\newtheorem{prop}[thm]{Proposition}
\newtheorem{lem}[thm]{Lemma}
\newtheorem{dfn}[thm]{Definition}

\newtheorem{rmk}[thm]{Remark}

%%%%%%section style%%%%%%%
\usepackage{titlesec}

%%%%%%%equation number%%%%%%%%%%%%%
\makeatletter

\@addtoreset{equation}{section}
\makeatother

%%%%%figure number in section%%%%%%%%%%%%%%%%%%%%
\renewcommand{\thefigure}{\arabic{section}.\arabic{figure}}
%%%%%%%%%%%%%%%%%%%%%%%%%%%%%%%%%

%%%%%%%%%%%%%%%%%%
%%%%%%%%%%stretch%%%%%%%%

%%%%%%%%%%%%%%%%

%%%%%%%%array%%%%
\renewcommand{\to}{\longrightarrow}

%%%%%%%%%%%%%%%%
  
%%%%%%%address%%%%%%%%%%%
\usepackage{authblk}
\usepackage{blindtext}

%%%%%%%%%%%%%%%%%%%%%%%%

\begin{document}
%%%%%%%%%%%%%%%%%%%%%%%%%%%%%%%%%%%%%%%%%%%%%%%%%%%%%%%%%%%%%%%%%%%%%%
%%%%%%%%%%%%%%%%%%%%%%%%%%%%%%%%%%%%%%%%%%%%%%%%%%%%%%%%%%%%%%%%%%%%%%
\titlepage
\title{\Large \textbf{Global existence for the $p$-Sobolev flow}}

 %%%%%%Please check the following affiliation and e-mail address%%%%%%%
\author[$\star$]{Tuomo Kuusi}
 %%%%%%Please check the following affiliation and e-mail address%%%%%%%
\affil[$\star$]{\small Department of Mathematics and Statistics,
University of Helsinki, PL 68 (Pietari Kalmin katu 5)
00014, Finland
 \quad \textcolor{blue}{\texttt{tuomo.kuusi@helsinki.fi}}}

  \author[$\dagger$]{Masashi Misawa}
 \affil[$\dagger$]{\small Faculty of Advanced Science and Technology, Kumamoto University, Kumamoto 860-8555, Japan
  \quad \textcolor{blue}{\texttt{mmisawa@kumamoto-u.ac.jp}}}

  \author[$\ast$]{Kenta Nakamura}
\affil[$\ast$]{\small Headquarters for Admissions and Education, Kumamoto University, Kumamoto 860-8555, Japan%\footnote{Present address}
 \quad \textcolor{blue}{\texttt{kntkuma21@kumamoto-u.ac.jp}}}

%\date{\small \colorbox{yellow!20}{\textbf{Private Text}}\\
%\colorbox{green!20}{\textbf{Published version}: \texttt{https://doi.org/10.1016/j.jde.2021.01.018}}}
 \date{}
 \maketitle

%%%%%%%keyword%%%%%%%%%%%%%%%%%%%%%%%
\providecommand{\keywords}[1]{\textbf{Keywords---} #1}
%%%%%%%%%%%%%%%%%%%%%%%%%%%%%%%%%%%
\begin{keywords}
$p$-Sobolev flow, Nonlinear Intrinsic Scaling Transformation, Expansion of Positivity.
\end{keywords}
\medskip

%%%%%%%MSC2010codes%%%%%%%%%%%%%%%%%%%%%%%
\providecommand{\msc}[1]{\textbf{MSC2010---} #1}
%%%%%%%%%%%%%%%%%%%%%%%%%%%%%%%%%%%
\begin{msc}
Primary: 35B45, 35B65, \quad Secondary: 35D30, 35K61.
\end{msc}
 \begin{abstract}
In this paper, we study a doubly nonlinear parabolic equation arising from the gradient flow for $p$-Sobolev type inequality, referred as $p$-Sobolev flow. In the special case $p=2$ our theory includes the classical Yamabe flow on a bounded domain in Euclidean space. Our main aim is to prove the global existence of the $p$-Sobolev flow together with its qualitative properties. 
\end{abstract}

\tableofcontents

 \section{Introduction}

Let $\Omega \subset \mathbb{R}^{n}\,(n \geq 3)$ be a bounded domain with smooth boundary $\partial \Omega $. For any positive $T \leq \infty$, let $\Omega_T : = \Omega \times (0, T)$ be the space-time cylinder.
%, and let $\partial_p \Omega_T$ be the parabolic boundary defined by $(\partial \Omega \times [0, T))\cup ( \Omega \times \{t = 0\}) $. 
Throughout the paper we fix  $ p \in [2,n)$ and set $q:=p^\ast - 1$, where $p^\ast := \frac{np}{n-p}$ is the Sobolev conjugate of $p$.
We consider the following doubly nonlinear parabolic system
\begin{equation}\label{pS0}
\begin{cases}
\,\,\partial_t(|u|^{q-1}u)-\Delta_pu=\lambda(t) |u|^{q-1}u\quad &\textrm{in}\quad \Omega_\infty  \\
\,\,\displaystyle \| u(t) \|_{L^{q+1}(\Omega)} =1\quad &\textrm{for all}\,\, t >0 \\[1mm]
\,\,u=0\quad &\textrm{on}\quad \!\!\partial\Omega \times (0,\infty) \\
\,\,u(\cdot, 0)=u_0(\cdot ) \quad &\textrm{in}\quad \Omega.
\end{cases}
\end{equation}
Here the unknown function $u=u(x,t)$ is a real-valued function defined for $(x,t) \in \Omega_\infty$, and the initial data $u_0$ is assumed to be in the Sobolev space $W^{1, p}_0 (\Omega)$, positive, bounded in $\Omega$ and satisfy $ \| u_0 \|_{L^{q+1}(\Omega)}=1$, as usual, $\partial_t:= \partial / \partial t$ and $\partial_\alpha:= \partial / \partial x_\alpha, \alpha = 1, \ldots, n$ are the partial derivatives on time and space, respectively, and $\nabla:= (\partial_1,\ldots ,\partial_n)$ is the gradient on space, and $\Delta_p u:=\mathrm{div}\left(|\nabla u|^{p-2}\nabla u\right)$ is the $p$-Laplacian. The condition imposed in the second line of~\eqref{pS0} is called the \emph{volume constraint} and $\lambda (t)$ is Lagrange multiplier stemming from this volume constraint. Indeed, multiplying~\eqref{pS0} by $u$ and integrating by parts, we find by a formal computation that $\displaystyle \lambda (t)=\int_\Omega |\nabla u(x,t)|^p \,dx$ (See \cite[Proposition 5.2]{Kuusi-Misawa-Nakamura} for its proof). We call the system~\eqref{pS0} as \textit{$p$-Sobolev flow}.

\medskip

Our main result in this paper is the following theorem. 
 
 \begin{thm}\label{mainthm}
 Assume that the initial value $u_{0}$ belongs to the Sobolev space $W_{0}^{1,p}$,  positive, bounded in $\Omega$, and satisfies $\|u_0\|_{L^{q+1}(\Omega)}=1$. Then there exists a global weak solution to the equation~\eqref{pS0}, which is positive and bounded in $\Omega_\infty$ and is, together with its spatial gradient, locally H\"{o}lder continuous in $\Omega_\infty$.
 \end{thm}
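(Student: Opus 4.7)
The plan is to reduce the volume-constrained system to an unconstrained doubly nonlinear equation via a nonlinear intrinsic scaling transformation, build a global weak solution to the latter, and transport its qualitative properties back. Concretely, if $v$ solves the unconstrained problem
\[
\partial_t(|v|^{q-1}v)-\Delta_p v=0 \quad\text{in}\ \Omega_\infty,\qquad v|_{\partial\Omega}=0,\qquad v(\cdot,0)=u_0,
\]
and one sets $\phi(t):=\|v(\cdot,t)\|_{L^{q+1}(\Omega)}^{-1}$ together with the time change $\tau'(t):=\phi(t)^{q-p+1}$, then a direct computation shows that $u(x,\tau):=\phi(t)v(x,t)$ solves \eqref{pS0} with Lagrange multiplier $\lambda(\tau)=q\,\phi(t)^{p-q-2}\phi'(t)$, which is consistent with the identity $\lambda=\int_\Omega|\nabla u|^p\,dx$ stated in the introduction. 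Since $q-p+1=p^\ast-p>0$ and $\|v(t)\|_{L^{q+1}}$ decays for the unconstrained flow, the time change sends $t\to\infty$ to $\tau\to\infty$, so global existence on the $\tau$-axis reduces to global existence on the $t$-axis.

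For the unconstrained equation, the natural strategy is to regularize the singular time nonlinearity (e.g.\ replacing $|v|^{q-1}v$ by $(\varepsilon+v^2)^{(q-1)/2}v$) and employ either a Galerkin scheme or a Rothe-type time discretization. Testing with $v$ and with Steklov averages of $\partial_t(|v|^{q-1}v)$ yields uniform bounds in $L^\infty_t L^{q+1}_x\cap L^p_t W^{1,p}_{0,x}$ together with control of the time derivative in a dual space, after which Aubin-Lions compactness and the monotonicity of the $p$-Laplacian allow passage to the limit. Positivity of $v$ propagates from $u_0>0$ by the minimum principle applied to the approximate problems (testing with $v_-$), and an $L^\infty$-bound follows from De Giorgi iteration on super-level sets, using the intrinsic parabolic scaling compatible with both $\Delta_p$ and $\partial_t(|\cdot|^{q-1}\,\cdot\,)$.

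The H\"older continuity of $u$ and of $\nabla u$ is the most delicate step and is driven by the expansion-of-positivity technique announced in the keywords, adapted in the spirit of DiBenedetto--Gianazza--Vespri to the doubly nonlinear setting. The plan is first to establish a quantitative local lower bound $u\ge c>0$ on compact subsets of $\Omega_\infty$; on such subsets the coefficient $|u|^{q-1}$ is then bounded above and below, so the equation effectively reduces to a non-degenerate $p$-Laplacian evolution for which DiBenedetto--Friedman type intrinsic-scaling arguments give H\"older continuity of $\nabla u$. The main obstacle will be precisely this quantitative expansion of positivity under the doubly intrinsic geometry dictated simultaneously by $p$ and by $q$, together with the bookkeeping required to transfer all estimates uniformly through the rescaling back to the constrained variable $u$, in particular ensuring that the Lagrange multiplier $\lambda(\tau)$ remains bounded on bounded intervals of $\tau$.
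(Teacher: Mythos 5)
Your overall strategy coincides with the paper's: solve the unconstrained prototype $\partial_t(v^q)-\Delta_p v=0$, transform by $u=\phi(t)v$ with time change $\tau'(t)=\phi(t)^{q+1-p}$ (you correctly note that $q+1-p$ is the right exponent; the paper writes it as $(q+1)p/n$, which is the same number), identify the Lagrange multiplier, and then use the expansion of positivity to obtain interior lower bounds and hence H\"older regularity. Your formula $\lambda=q\,\phi^{p-q-2}\phi'$ also matches the paper's $\lambda=-q\gamma'/\gamma$ after the change of variables.

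There is, however, a genuine gap in the sentence ``the time change sends $t\to\infty$ to $\tau\to\infty$, so global existence on the $\tau$-axis reduces to global existence on the $t$-axis.'' The prototype solution $v$ \emph{extinguishes at a finite time} $S^\ast$ (a fact the paper proves by comparison with a separable Talenti-type barrier), so for $t>S^\ast$ one has $v\equiv 0$, $\phi=+\infty$, and the transformation $u=\phi v$ is a meaningless $\infty\cdot 0$. Consequently the transformation lives only on $t\in(0,S^\ast)$, and what actually has to be verified is
\[
\tau(S^\ast{-})=\int_0^{S^\ast}\|v(t)\|_{L^{q+1}(\Omega)}^{-(q+1-p)}\,dt=+\infty,
\]
which does \emph{not} follow merely from the monotone decay of $\|v(t)\|_{L^{q+1}}$: the sharp Sobolev inequality gives only a lower bound $\|v(t)\|_{q+1}\gtrsim(S^\ast-t)^{1/(q+1-p)}$, which bounds the integrand from \emph{above} by $(S^\ast-t)^{-1}$ and so is inconclusive. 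The paper circumvents this by defining the change of variables through two autonomous ODEs for $\Lambda$ and $g$: the right-hand side of the $\Lambda$-equation is bounded (so $\Lambda$ is globally Lipschitz in $\tau$) and strictly positive whenever $\Lambda$ is finite (because $\|v(s)\|_{q+1}$ is continuous and positive on $[0,S^\ast)$), which forces $\Lambda(\tau)\nearrow\infty$; composing with $g$ then gives $s(t)\nearrow S^\ast$ as $t\to\infty$, and the divergence of the integral above comes out as a corollary. You need either this ODE argument or a genuine two-sided decay-rate estimate for $\|v\|_{q+1}$ near $S^\ast$; neither is present in your sketch. A minor secondary imprecision: testing with $v_-$ gives nonnegativity of $v$, not positivity — indeed $v$ cannot stay positive past $S^\ast$ — and the strict positivity of $u$ for all time is obtained only \emph{after} the transformation, from the volume constraint $\|u(t)\|_{q+1}=1$ together with the expansion-of-positivity machinery.
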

In our forthcoming work we will proceed further with the analysis of the $p$-Sobolev flow. Especially, we will classify the limits as time tends to infinity.

\smallskip
 
The doubly nonlinear equations have been considered by Vespri \cite{Vespri1}, Porzio and Vespri \cite{Porzio-Vespri}, and Ivanov \cite{Ivanov1, Ivanov2}.  See also \cite{Gianazza-Vespri, Vespri2,Kinnunen-Kuusi,Kuusi-Siljander-Urbano}.
The regularity proofs for doubly nonlinear equations are based on the intrinsic scaling method, originally introduced by DiBenedetto, and they have to be arranged in some way depending on the particular form of the equation. In \cite{Kuusi-Misawa-Nakamura} we have already treated the very fast diffusive doubly nonlinear equation such as the $p$-Sobolev flow~\eqref{pS0}, and obtained the positivity, boundedness and regularity of weak solutions. In particular, the expansion of positivity for~\eqref{pS0} is shown by the De Giorgi's iteration based on local energy estimates in the intrinsic scaling setting (refer to \cite{DiBenedetto2}).  The solution to~\eqref{pS} remains positive for all finite times by the volume constraint. This is here applied for the global existence of the $p$-Sobolev flow~\eqref{pS0}, as explained later.

%See \cite{Nakamura-Misawa} for existence of a weak solution and \cite{Kuusi-Misawa-Nakamura} for it's regularity estimates.

\smallskip

In compact manifold setting with $p=2$, our $p$-Sobolev flow~\eqref{pS} is exactly the classical Yamabe flow equation in the Euclidean space. The classical Yamabe flow was originally introduced by Hamilton in his study of the so-called Yamabe problem (\cite{Yamabe, Aubin1, Aubin2}), asking the existence of a conformal metric of constant curvature on $n(\geq 3)$-dimensional closed Riemannian manifolds (\cite{Hamilton89}). Let $(\mathcal{M},g_{0})$ be a $n(\geq 3)$-dimensional smooth, closed Riemannian manifold with scalar curvature $R_{0}=R_{g_{0}}$. The classical Yamabe flow is given by the heat flow equation
\begin{equation}
\label{heatflow}
u_t=(s-R)u=u^{-\frac{4}{n-2}}(c_n \Delta_{g_0}u-R_0u)+su,
\end{equation}
where $u=u(t), t\geq 0$, is a positive smooth function on $\mathcal{M}$ such that $g(t)=u(t)^{\frac{4}{n-2}}g_0$ is a conformal change of a Riemannian metric $g_{0}$, with volume constraint $\displaystyle \mathrm{Vol}(\mathcal{M})=\int_\mathcal{M}\,dvol_g=\int_\mathcal{M} u^{\frac{2n}{n-2}}dvol_{g_0}=1$, having total curvature
$$\displaystyle s:=\int_\mathcal{M}(c_n |\nabla u|_{g_0}^2+R_0u^2)\,dvol_{g_0}=\int_\mathcal{M}R\,dvol_{g},\quad c_n:=\frac{4(n-1)}{n-2}.$$
Note that the condition for volume above naturally corresponds to the volume constraint in~\eqref{pS0}.
Hamilton (\cite{Hamilton89}) proved a convergence of the Yamabe flow as $t \to \infty$ under some geometric conditions. Under the assumption that $(\mathcal{M},g_0)$ is of positive scalar curvature and locally conformal flat, Ye (\cite{Ye}) showed the global existence of the Yamabe flow and its convergence as $t \to \infty$ to a metric of constant scalar curvature. Schwetlick and Struwe (\cite{Schwetlick-Struwe})
established the asymptotic convergence of the Yamabe flow for an initial positive scalar curvature in the case $3 \leq n \leq 5$, under an appropriate condition of Yamabe invariance $Y(\mathcal{M},g_0)$, which is given by infimum of Yamabe energy $E(u)=\int_\mathcal{M}(c_n |\nabla u|_{g_0}^2+R_0u^2)\,dvol_{g_0}$ among all positive smooth function $u$ on $\mathcal{M}$ with $\mathrm{Vol}(\mathcal{M})=1$. In Euclidean case, since $R_{g_{0}}=0$ their curvature assumptions are not verified. In above outstanding results concerning the Yamabe flow, the equation is equivalently transformed to the scalar curvature equation, and this is crucial for obtaining many properties for the Yamabe flow. In contrast to their methods, we are forced to take a direct approach dictated by the structure of the $p$-Laplacian leading to the degenerate or singular parabolic equation of the $p$-Sobolev flow. Let us remark that our results cover those of the classical Yamabe flow in the Euclidian setting.

\smallskip

Our global existence result for the $p$-Sobolev flow~\eqref{pS0} is established by applying a nonlinear intrinsic scaling transformation to the following prototype doubly nonlinear parabolic equation
\begin{equation}\label{bpS}
\begin{cases}
\,\,\partial_s(|v|^{q-1}v)-\Delta_pv=0\quad &\textrm{in}\quad \Omega_S  \\ 
\,\,v=0\quad &\textrm{on}\quad \!\!\partial\Omega \times (0,S) \\
\,\,v(\cdot, 0)=v_0(\cdot ) \quad &\textrm{in}\quad \Omega.
\end{cases}
\end{equation}
Here $0<S\leq \infty$, the unknown function $v=v(x,s)$ is real-valued function defined for $(x,s) \in \Omega_{S}$, and the given function $v_{0}$ is in the Sobolev space $W^{1, p}_0 (\Omega)$, nonnegative and bounded in $\Omega$. In \cite{Nakamura-Misawa} the existence result is obtained from the backward difference quotient on time and 
%reducing~\eqref{bpS} into elliptic problem, which can be solved by 
%
Galerkin's  procedure. For the global existence we crucially use the expansion of positivity for the $p$-Sobolev flow, as stated before. Based on the positivity estimates in \cite{Kuusi-Misawa-Nakamura}, here we present the refinement for the expansion of positivity for~\eqref{pS0} with its precise proof (See Appendix~\ref{Refinement of Expansion of Positivity}). Combining the nonlinear intrinsic scaling applied for~\eqref{bpS}, and the refined expansion of positivity, we establish the global existence of a regular weak solution to the $p$-Sobolev flow. 
\bigskip

The structure of this paper is as follows. In Section~\ref{Sec. Preliminaries}, we prepare some notation and give the definition of a weak solution of~\eqref{pS}. In Section~\ref{Sec. Prototype doubly nonlinear equation}, we recall the global existence and regularity estimates for~\eqref{bpS} obtained in \cite{Nakamura-Misawa, Kuusi-Misawa-Nakamura}. Starting from positive initial data, the solution of~\eqref{bpS} is positive up to a finite time, that is, the positivity expands and, furthermore, the solution vanishes at a finite time, that is verified by the comparison principle.  In Section~\ref{Sec. Nonlinear intrinsic scaling transformation}, we present the nonlinear intrinsic scaling transformation from~\eqref{bpS} to~\eqref{pS}, which is justified via mollifier argument in Appendix~\ref{Sec. rigorous argument}. In Section~\ref{Sec. Proof of mainthm} we give the proof of Theorem~\ref{mainthm}.  Here the expansion of positivity by the volume constraint for the $p$-Sobolev flow~\eqref{pS0} is crucially applied for extending the life span of the solution and yielding the global existence. In Appendix~\ref{Refinement of Expansion of Positivity}, we present the refined expansion of positivity for the $p$-Sobolev flow type equation by use of a stretching time transformation with its precise proof, and also prove the key propositions used in the proof of Theorem~\ref{mainthm}. In Appendix~\ref{Sec. Convergence result} we give the elementary convergence result with its proof, which is used in the next appendix. In Appendix~\ref{Sec. rigorous argument}, we demonstrate that the nonlinear intrinsic scaling rigorously works.

\subsection*{Acknowledgments}
The nonlinear intrinsic scaling transformation in Section~\ref{Sec. Nonlinear intrinsic scaling transformation} was kindly suggested to us by Professor J. L. Vazquez (\cite{Vazquez3}) in 2013. T. Kuusi is supported by the Academy of Finland (grant 323099) and the European Research Council (ERC) under the European Union's Horizon 2020 research and innovation programme (grant agreement No 818437). \\[2mm]
The work by M. Misawa was partially supported by the Grant-in-Aid
for Scientific Research (C) Grant number No.18K03375 at Japan Society for the Promotion of Science.\\[2mm]
K. Nakamura is supported by Foundation of Research Fellows, The Mathematical Society of Japan. %K. Nakamura's research is also partially supported by JSPS Grant-in-aid for Scientific Research (S) Grant number No.19H05597.

%%%%%%%%%%%%%%%%%%%%%%%%%%%%%%%%%%%%%%%%%%%%%%%%%%%%%%%%%%%%%%%%%%%%%%%%%%%%%%%%%%%%%%%%%%%%%%%%%%%%%%%%%%%%%%%%%%%%%%%%%%%%%%%%%%%%%%%%%%%%%%%%%%%
 \section{Preliminaries}\label{Sec. Preliminaries}
 \noindent

We prepare some notation and fundamental tools, which are used throughout this paper. 

\bigskip
\noindent

Let $\Omega \subset \mathbb{R}^n\,\,(n \geq 3)$ be a bounded domain with smooth boundary $\partial \Omega$. %For a positive $T \leq \infty$, let $\Omega_T:=\Omega \times (0,T)$ be the space-time domain. %Analogously, for $t_{1}<t_{2}$, we put $\Omega_{t_{1},t_{2}}:=\Omega \times (t_{1},t_{2})$. %
Let us define the parabolic boundary of the space-time cylinder $\Omega_T:=\Omega \times (0,T)$ by
\[ 
\partial_p \Omega_T:= \partial \Omega \times [0, T)\cup \Omega \times \{t = 0\}. 
 \]
 Let $B_R(x_0):=\{x \in \mathbb{R}^n\,:\,|x-x_0|<R\}$ denote the open ball with radius $R>0$ centered at some $x_0 \in \mathbb{R}^n$. We denote the positive part of $a \in \mathbb{R}$ by $a_+:=\max\{a,0\}$.
 \bigskip

 In what follows, we denote by $C$, $C_{1}$, $C_{2}, \cdots $ different positive constants in a given context. Relevant dependencies on parameters will be emphasized using parentheses. For instance $C=C(n,p,\Omega,\cdots )$ means that $C$ depends on $n, p, \Omega \cdots$.  As customary, the equation number $(\,\cdot\,)_n$ denotes the $n$-th line of the Eq. $(\,\cdot\,)$.
 \bigskip

We next prepare some function spaces, defined on space-time region.  For two indices  $1 \leq p,q \leq \infty$, $L^{q}(t_1,t_2\,;\,L^{p}(\Omega))$ denotes the space of measurable real-valued functions on a space-time region $\Omega \times (t_1,t_2)$ with a finite norm
\[
\|v\|_{L^{q}(t_1,t_2\,;\,L^{p}(\Omega))}:=
\begin{cases}
\displaystyle \left(\int_{t_1}^{t_2}\|v(t)\|_{L^{p}(\Omega)}^{q}\,dt \right)^{1/q}\quad &(1 \leq q<\infty) \\
\displaystyle \esssup_{t_1 \leq t \leq t_2}\|v(t)\|_{L^{p}(\Omega)}\quad &(q=\infty),
\end{cases}\notag
\]
where 
\[
\|v(t)\|_{p}=\|v(t)\|_{L^{p}(\Omega)}:=
\begin{cases}
\left(\displaystyle \int_{\Omega}|v(x,t)|^{p}\,dx \right)^{1/p}\quad &(1 \leq p <\infty )\\
\esssup \limits_{x \in \Omega}|v(x,t)|\quad &(p=\infty).
\end{cases}
\]
When $p=q$, we write $L^p(\Omega \times (t_1,t_2))=L^{p}(t_1,t_2\,;\,L^{p}(\Omega))$ for brevity. For $1 \leq p <\infty$ the Sobolev space $W^{1,p}(\Omega)$ consists of measurable real-valued functions  in $\Omega$ which are weakly differentiable and of which weak derivatives are $p$-th integrable on $\Omega$, with the norm 
\[
\|v\|_{W^{1,p}(\Omega)}:=\left(\int_{\Omega}|v|^{p}+|\nabla v|^{p}\,dx \right)^{1/p}
\]
and let $W_{0}^{1,p}(\Omega)$ be the closure of $C_{0}^{\infty}(\Omega)$, the smooth functions with a compact support,  with respect to the norm $\|\cdot\|_{W^{1,p}}$.  The space $L^{q}(t_1,t_2\,;\,W_{0}^{1,p}(\Omega))$ comprises all measurable real-valued functions on space-time domain with a finite norm
\[
\|v\|_{L^{q}(t_1,t_2\,;\,W_{0}^{1,p}(\Omega))}:=\left(\int_{t_1}^{t_2}\|v(t)\|_{W^{1,p}(\Omega)}^{q} \,dt\right)^{1/q}.
\] 
Additionally, for an interval $I \subset \mathbb{R}$, the space $C(I ; L^{q}(\Omega))$  consists of all continuous functions $I \ni t \mapsto u (t) \in L^q (\Omega)$. The function space $C(I; W_{0}^{1,p}(\Omega))$ is defined analogously to the above. 
\smallskip

%
%Finally we set the notation concerning integration. Let $z=(x,t) \in \mathbb{R}^{n}\times \mathbb{R}$ be a space-time variable and  $dz=dxdt$ be the space-time volume element.  For a measurable set $X \subset \mathbb{R}^{n+1}$ and a function $f \in L^1(X)$, let us define the average integral of $f$ over $X$ by 
%
%\[
%(f)_{X}\equiv \dashint_{X} f dz:=\frac{1}{|X|}\int_{X} f\,dz,
%\]
%
%where $|X|$ denotes the $(n+1)$-dimensional Lebesgue measure of $X$.

%For an open set $\Xi \subset \mathbb{R}^{n}$ and a function $f \in L^1(\Xi)$ we will write the integral $\displaystyle \int_{\Xi}f(x,t)dx$ simply $\displaystyle \int_{\Xi}f(t)dx$ when no confusion can arise.
%
\medskip

We next need the following fundamental algebraic inequality, associated with the $p$ -Laplace operator (see \cite{Barrett-Liu, DiBenedetto1}).
\begin{lem}[Algebraic inequality]\label{Barrett-Liu}
For every $p \in (1,\infty)$ there exist positive constants $C_{1}(p,n)$ and $C_{2}(p,n)$ such that for all $\xi,\,\eta \in \mathbb{R}^{n}$
\begin{equation}\label{BL1}
||\xi|^{p-2}\xi-|\eta|^{p-2}\eta| \leq C_{1}(|\xi|+|\eta|)^{p-2}|\xi-\eta|
\end{equation}
and
\begin{equation}\label{BL2pre}
(|\xi|^{p-2}\xi-|\eta|^{p-2}\eta)\cdot (\xi-\eta) \geq C_2 (|\xi|+|\eta|)^{p-2}|\xi-\eta|^2,
\end{equation}
where dot $\cdot $ denotes the inner product in $\mathbb{R}^{n}$.  In particular, if $p\geq 2$, then 
\begin{equation}\label{BL2}
(|\xi|^{p-2}\xi-|\eta|^{p-2}\eta)\cdot (\xi-\eta) \geq C_2 |\xi-\eta|^p. 
\end{equation}
\end{lem}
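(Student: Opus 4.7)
My strategy is to reduce both inequalities to pointwise spectral information about the Jacobian of the vector field $V : \mathbb{R}^n \to \mathbb{R}^n$ given by $V(x) = |x|^{p-2}x$. A direct computation yields
\[
DV(x) = |x|^{p-2}\Bigl(I + (p-2)\frac{x \otimes x}{|x|^2}\Bigr),
\]
whose eigenvalues are $(p-1)|x|^{p-2}$ along the direction of $x$ and $|x|^{p-2}$ on the orthogonal $(n-1)$-dimensional subspace. In particular $|DV(x)| = \max(1,p-1)|x|^{p-2}$ and $\langle DV(x)\zeta,\zeta\rangle \geq \min(1,p-1)|x|^{p-2}|\zeta|^2$. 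Setting $\eta_t := \eta + t(\xi - \eta)$, the fundamental theorem of calculus supplies the key identity
\[
V(\xi) - V(\eta) \;=\; \biggl(\int_0^1 DV(\eta_t)\,dt\biggr)(\xi - \eta).
\]

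From this representation, \eqref{BL2pre} follows by pairing with $\xi - \eta$ and applying the lower eigenvalue bound; the problem then reduces to the auxiliary inequality
\[
\int_0^1 |\eta_t|^{p-2}\,dt \;\geq\; c(p)\,(|\xi|+|\eta|)^{p-2},
\]
which I would prove by noting that $t \mapsto |\eta_t|$ is a convex scalar function on $[0,1]$ with boundary values $|\eta|$ and $|\xi|$, then analyzing the quadratic $|\eta_t|^2 = |\eta|^2 + 2t\,\eta\cdot(\xi-\eta) + t^2|\xi-\eta|^2$ to extract a subinterval of uniform length on which $|\eta_t|$ is comparable to $|\xi|+|\eta|$. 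The inequality \eqref{BL2} then follows from \eqref{BL2pre} for $p\ge 2$ via the triangle inequality $|\xi-\eta|\le|\xi|+|\eta|$, since for nonnegative exponent $p-2$ this yields $(|\xi|+|\eta|)^{p-2}|\xi-\eta|^2 \ge |\xi-\eta|^p$.

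For \eqref{BL1} the strategy splits by the sign of $p-2$. When $p\ge 2$, taking absolute values inside the integrand and using $|\eta_t| \le |\xi|+|\eta|$ together with $|DV(\eta_t)| \le (p-1)|\eta_t|^{p-2}$ gives \eqref{BL1} immediately with constant $(p-1)$. When $1<p<2$ the integrand $|\eta_t|^{p-2}$ becomes singular wherever the segment approaches the origin, so I would instead run a case split: if $\min(|\xi|,|\eta|)\le \tfrac{1}{2}\max(|\xi|,|\eta|)$ then $|\xi-\eta|$ is comparable to $\max(|\xi|,|\eta|)\sim|\xi|+|\eta|$ and the crude bound $|V(\xi)-V(\eta)|\le|\xi|^{p-1}+|\eta|^{p-1}$ suffices; otherwise the two norms are comparable, $|\eta_t|$ remains uniformly comparable to $|\xi|+|\eta|$ along the whole chord, and the FTC argument goes through. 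The \emph{main obstacle} is exactly this singular regime $1<p<2$: the pointwise bound on $|DV|$ need not be integrable along arbitrary chords and a geometric case split is forced. Fortunately, since the paper assumes $p\in[2,n)$ throughout, only the clean $p\ge 2$ branches are strictly needed for the applications that follow.
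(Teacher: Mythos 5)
The paper offers no proof of this lemma at all --- it simply cites \cite{Barrett-Liu, DiBenedetto1} --- so your argument cannot be compared against an in-paper proof; it can only be judged on its own terms. Your strategy (the Jacobian $DV(x)=|x|^{p-2}\bigl(I+(p-2)\tfrac{x\otimes x}{|x|^2}\bigr)$, its eigenvalues $(p-1)|x|^{p-2}$ and $|x|^{p-2}$, and the chord representation $V(\xi)-V(\eta)=\bigl(\int_0^1 DV(\eta_t)\,dt\bigr)(\xi-\eta)$) is the standard route, and it works cleanly for everything the paper actually uses: for $p\ge 2$ both \eqref{BL1} and \eqref{BL2pre} follow as you describe (for the lower bound, e.g.\ with $|\xi|\ge|\eta|$ one has $|\eta_t|\ge|\xi|-(1-t)|\xi-\eta|\ge\tfrac14(|\xi|+|\eta|)$ for $t\in[3/4,1]$), and \eqref{BL2} is indeed immediate from \eqref{BL2pre}. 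For $1<p<2$ the auxiliary lower bound in \eqref{BL2pre} is even easier than you suggest, since $|\eta_t|\le|\xi|+|\eta|$ and the exponent $p-2$ is negative give $|\eta_t|^{p-2}\ge(|\xi|+|\eta|)^{p-2}$ pointwise.

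There is, however, a genuine flaw in your case split for \eqref{BL1} when $1<p<2$. You claim that if $\min(|\xi|,|\eta|)>\tfrac12\max(|\xi|,|\eta|)$ then ``$|\eta_t|$ remains uniformly comparable to $|\xi|+|\eta|$ along the whole chord.'' This is false: take $\xi=e_1$ and $\eta=-0.9\,e_1$, which satisfy your comparability hypothesis, yet the segment passes through the origin and $\int_0^1|\eta_t|^{p-2}\,dt$ is not controlled pointwise by $(|\xi|+|\eta|)^{p-2}$ in the way your argument assumes. The correct dichotomy is on $|\xi-\eta|$ versus $\max(|\xi|,|\eta|)$: if $|\xi-\eta|\ge\tfrac12\max(|\xi|,|\eta|)$ then $|\xi-\eta|\sim|\xi|+|\eta|$ and the crude bound $|V(\xi)-V(\eta)|\le|\xi|^{p-1}+|\eta|^{p-1}$ suffices (this absorbs both your first case and the antipodal configurations above); otherwise $|\eta_t|\ge\max(|\xi|,|\eta|)-|\xi-\eta|\ge\tfrac12\max(|\xi|,|\eta|)$ for all $t$, the chord genuinely avoids the origin, and the FTC argument closes. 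Alternatively one invokes the standard estimate $\int_0^1|\eta_t|^{p-2}\,dt\le\tfrac{C}{p-1}\max(|\xi|,|\eta|)^{p-2}$ for $p-2\in(-1,0)$, proved by writing $|\eta_t|\ge|t-t_0|\,|\xi-\eta|$ about the minimizing parameter $t_0$. Since the paper fixes $p\in[2,n)$, this defect does not affect anything downstream, but as a proof of the lemma as stated for all $p\in(1,\infty)$ it must be repaired.
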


%%%

Following \cite[Definition 3.2]{Kuusi-Misawa-Nakamura}, we present the definition of a weak solution to the $p$-Sobolev flow equation~\eqref{pS0}.

\begin{dfn}[weak solution of the $p$-Sobolev flow]\label{def of weak sol.}\normalfont 
%Suppose that $\lambda \in L^{\infty}(0,T)$.  %
Let $0<T \leq \infty$. A measurable function $u$ defined on $\Omega_{T}$ is called a \textit{weak solution} of~\eqref{pS0}
%$$\partial_tu^{q}-\Delta_pu=\lambda(t) u^q\quad \textrm{in}\quad \Omega_\infty $$
 if the following (D1)-(D4) are satisfied.
 \begin{enumerate}[(D1)]
 \item $u \in L^{\infty}(0,T\,;\,W^{1,p}(\Omega))$; \,\, $\partial_{t}(|u|^{q-1}u) \in L^{2}(\Omega_{T}).$
 
 \item There exists a function $\lambda (t) \in L^1_{\mathrm{loc}} (0, T)$ such that, for every $\varphi \in C^{\infty}_{0}(\Omega_T) $, 
 \begin{equation*}-\int_{\Omega_T}|u|^{q-1}u\varphi_{t}\,dz+\int_{\Omega_T}|\nabla u|^{p-2}\nabla u\cdot \nabla \varphi\,dz = \int_{\Omega_T} \lambda(t) |u|^{q-1}u\varphi dz.
 \end{equation*}

 \item $\|u (t)\|_{L^{q +1} (\Omega)} = 1$ for all positive $t<T$.

 \item $u = 0$ on $\partial \Omega \times (0,T)$
and $u (0) = u_0$ in $\Omega$ in the trace sense: \\
 $u(t) \in W^{1, p}_0 (\Omega)$ for almost every $t \in (0, T)$;
\[
\|u (t) - u_0\|_{W^{1,p} (\Omega)}\rightarrow 0 \quad \textrm{as}\quad  t\searrow 0.
\]
\end{enumerate}
\end{dfn}
\begin{prop}\label{lambda equality}
Let $u$ be a weak solution of~\eqref{pS0}.  If the initial value $u_{0}$ is nonnegative and bounded in $\Omega$ then so is $u$. Furthermore, the local $L^{1}$-function $\lambda(t)$ in the first line of~\eqref{pS0} is given by 
\[
\displaystyle \lambda(t)=\int_{\Omega}|\nabla u(t)|^{p}\,dx.
\]
\end{prop}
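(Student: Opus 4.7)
The proposition splits naturally into the qualitative claims (nonnegativity and boundedness of $u$) and the explicit formula for $\lambda(t)$. The qualitative statements are established in \cite{Kuusi-Misawa-Nakamura} and I would simply invoke them; the underlying ideas are as follows. Nonnegativity comes from testing the (Steklov-regularized) weak formulation with the negative part $u_-$, using $|u|^{q-1}u\cdot u_- = -u_-^{q+1}$ and $|\nabla u|^{p-2}\nabla u\cdot\nabla u_- = -|\nabla u_-|^p$, which yields
\begin{equation*}
\frac{q}{q+1}\,\frac{d}{dt}\|u_-(t)\|_{L^{q+1}(\Omega)}^{q+1} + \int_\Omega |\nabla u_-(t)|^p\,dx = \lambda(t)\,\|u_-(t)\|_{L^{q+1}(\Omega)}^{q+1};
\end{equation*}
since $\lambda\in L^1_{\mathrm{loc}}$ and $u_-(0)\equiv 0$, Gronwall's inequality forces $u_-\equiv 0$. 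Boundedness follows by comparison with the spatially constant supersolution $M(t):=\|u_0\|_{L^\infty(\Omega)}\exp\bigl((1/q)\int_0^t\lambda(s)\,ds\bigr)$, which solves $\partial_t M^q-\Delta_p M=\lambda(t)M^q$ because $\Delta_p M=0$, combined with a standard truncation test against $(u-M(t))_+$.

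The main content is the formula for $\lambda(t)$, for which the formal argument is to test the PDE in~\eqref{pS0} with $\varphi=u$ itself. Using the pointwise chain-rule identity
\begin{equation*}
u\cdot\partial_t(|u|^{q-1}u) \;=\; \frac{q}{q+1}\,\partial_t|u|^{q+1},
\end{equation*}
spatial integration yields
\begin{equation*}
\frac{q}{q+1}\,\frac{d}{dt}\|u(t)\|_{L^{q+1}(\Omega)}^{q+1} + \int_\Omega|\nabla u(t)|^p\,dx = \lambda(t)\,\|u(t)\|_{L^{q+1}(\Omega)}^{q+1}.
\end{equation*}
The volume constraint (D3) kills the time derivative and normalizes the right-hand side factor to $1$, giving the claimed identity.

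The rigorous justification is the main obstacle, since $u\notin C_0^\infty(\Omega_T)$. I would employ Steklov averaging: insert $\varphi(x,t):=[u]_h(x,t)\,\eta(t)$ with $\eta\in C_0^\infty(t_1,t_2)$ into the weak form and let $h\to 0^+$. The strong convergence $[u]_h\to u$ in $L^p(t_1,t_2;W_0^{1,p}(\Omega))$ (from (D1)) handles the diffusion term, while $\partial_t(|u|^{q-1}u)\in L^2(\Omega_T)$ combined with the $C^1$-regularity of the map $\xi\mapsto|\xi|^{(q+1)/q}$ (valid because $(q+1)/q>1$) handles the parabolic term through the distributional chain rule $\partial_t|v|^{(q+1)/q}=\frac{q+1}{q}\,u\,\partial_t v$ for $v:=|u|^{q-1}u$. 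This chain rule is the only non-routine ingredient; once available, arbitrariness of $\eta$ and of the sub-interval $(t_1,t_2)$ yields the pointwise identity $\lambda(t)=\int_\Omega|\nabla u(t)|^p\,dx$ for almost every $t$.
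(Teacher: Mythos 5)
Your proposal is correct and follows essentially the same route as the paper, which for this proposition simply cites \cite[Propositions 5.1, 5.2]{Kuusi-Misawa-Nakamura}: your derivation of the formula by testing with $u$, using $u\,\partial_t(|u|^{q-1}u)=\tfrac{q}{q+1}\partial_t|u|^{q+1}$ and the volume constraint, is exactly the formal computation the paper records in its introduction, and Steklov averaging is the standard rigorous implementation of it. One cosmetic remark: the pointwise identities you list (e.g.\ $|u|^{q-1}u\cdot u_-=-u_-^{q+1}$) correspond to the test function $+u_-$, whereas the displayed energy identity is the one obtained with $-u_-$ (equivalently, after multiplying through by $-1$); this does not affect the Gronwall step.
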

\begin{proof}
See \cite[Proposition 5.1, Proposition 5.2]{Kuusi-Misawa-Nakamura} for more details.
\end{proof}
In what follows, under the assumption that the initial data $u_0$ is in the Sobolev space $W^{1,p}_0 (\Omega)$, positive and bounded in $\Omega$ and satisfies $ \|u_0\|_{L^{q+1}(\Omega)}=1$, we address the following equation~\eqref{pS} in place of~\eqref{pS0}:

\begin{equation}\label{pS}
\begin{cases}
\,\,\partial_t(u^{q})-\Delta_pu=\lambda(t) u^{q}\quad &\textrm{in}\quad \Omega_\infty  \\
\,\,\displaystyle \| u(t) \|_{L^{q+1}(\Omega)} =1\quad &\textrm{for all}\,\, t \geq 0\\[1mm]
\,\,u=0\quad &\textrm{on}\quad \!\!\partial\Omega \times (0,\infty) \\
\,\,u(\cdot, 0)=u_0(\cdot ) \quad &\textrm{in}\quad \Omega.

\end{cases}
\end{equation}
%

%%%%%%%%%%%%%%%%%%%%%%%%%%%%%%%%%%%%%%%%%%%
\section{Prototype Doubly Nonlinear Equation}\label{Sec. Prototype doubly nonlinear equation}
%%%%%%%%%%%%%%%%%%%%%%%%%%%%%%%%%%%%%%%%%%%%%
In this section, we study the nonlinear scaling for the following doubly nonlinear equation~\eqref{bpS}.
%We call the system~\eqref{bpS} as \textit{basic $p$-Sobolev flow} hereafter. 
Firstly, we recall the definition and the global existence result of weak solutions to~\eqref{bpS}:
\begin{dfn}\label{def of v}\normalfont 
Let $0 < S \le \infty$. A measurable function $v$, defined on $\Omega_{S}:=\Omega \times (0,S)$,  is a \textit{weak supersolution (subsolution)} of~\eqref{bpS} if the following conditions are satisfied:
\begin{enumerate}[(i)]
\item $v \in L^{\infty}(0,S\,;\,W^{1,p}(\Omega))$,\,\,\,$\partial_{s} (|v|^{q-1}v) \in L^{2}(\Omega_{S})$.
\item For every nonnegative $\eta \in C^{\infty}_{0}(\Omega_{S})$
\[
-\int_{\Omega_{S}}|v|^{q-1}v \cdot \partial_{s}\eta\,dxds+\int_{\Omega_{S}}|\nabla v|^{p-2}\nabla v\cdot \nabla \eta\,dxds \geq (\leq)0.
\] 
\item $v = 0 $ on $\partial \Omega \times (0,S)$ and $v(0) = v_0$ in $\Omega$ in the trace sense: \\
 $v(s) \in W^{1, p}_0 (\Omega)$ for almost every $s \in (0, S)$;
\[
\|v(s) - v_0\|_{W^{1,p} (\Omega)}\rightarrow 0 \quad \textrm{as}\quad  s\searrow 0.
\]

\end{enumerate}
\smallskip
Furthermore, a measurable function $v$ defined on $\Omega_{S}$ is called a \emph{weak solution } to~\eqref{bpS} if it is simultaneously a weak super and subsolution, i.e., 
\[
-\int_{\Omega_{S}}|v|^{q-1}v \cdot \partial_{s}\eta\,dxds+\int_{\Omega_{S}}|\nabla v|^{p-2}\nabla v\cdot \nabla \eta\,dxds=0.
\] 
holds for every $\eta \in C^{\infty}_{0}(\Omega_{S})$.
\end{dfn}
\medskip
%
%Let the initial data $v_0$ be in $W^{1, p}_0 (\Omega)$, nonnegative and bounded in $\Omega$. 
%We first claim that any solution to~\eqref{bpS} must be nonnegative provided $u_{0} \geq 0$ in $\Omega$. See \cite[Proposition 3.4]{Kuusi-Misawa-Nakamura} for detail and its proof.

%
In \cite[Theorem 1.1]{Nakamura-Misawa} and  \cite{Kuusi-Misawa-Nakamura},  we proved the global existence of a weak solution to~\eqref{bpS} and it's regularity estimates as follows:
\begin{thm}
[Global existence of~\eqref{bpS} \cite{Kuusi-Misawa-Nakamura, Nakamura-Misawa}]\label{our theorem}
Assume the initial value $v_0$ be in $W^{1, p}_0 (\Omega)$, nonnegative and bounded in $\Omega$. 
Then there exists a global in time weak solution  $v$ of~\eqref{bpS}, which is nonnegative and bounded in $\Omega_\infty$ that is, 
\begin{equation}\label{MP}
0 \leq v\leq \|v_{0}\|_{\infty} \quad \textrm{in}\,\,\Omega_\infty.
\end{equation}
% and, %together with its gradient, is locally H\"{o}lder continuous in $\Omega_\infty$ and, %
In addition, $v$ satisfies the following energy equality, for $0\leq s_{1}<s_{2}< \infty$,
\begin{equation}\label{energy eq1 of v}
\|v(s_{2})\|_{q+1}^{q+1}+\frac{q+1}{q}\int_{s_{1}}^{s_{2}}\|\nabla v(s)\|_{p}^{p}\,ds=\|v(s_{1})\|_{q+1}^{q+1}
\end{equation}
and, the integral inequalities hold true, for any nonnegative $s<\infty$, %
\begin{equation}\label{energy ineq1 of v}
\|v(s)\|_{q+1} \leq \|v_{0}\|_{q+1},
\end{equation}
\begin{equation}\label{energy ineq2 of v}
\|\nabla v(s)\|_{p} \leq \|\nabla v_{0}\|_{p},
\end{equation}
\begin{equation}\label{energy ineq3 of v}
\|\partial_s v^q\|_2^2
\le C \|v_0\|_\infty^{q-1} \|\nabla v_0\|_p^p,
\end{equation}
where $C=C(n,p)$ is a positive constant, and the $L^p$-norm on space of $v$ is denoted by $\|v\|_{p}:=\|v\|_{L^{p}(\Omega)}$ for brevity.
\end{thm}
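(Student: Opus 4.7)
The plan is to construct a global weak solution by the implicit-Euler semi-discretization in time, solving each elliptic step variationally, and then passing to the limit $h\to 0$ via uniform energy estimates and the monotonicity of the $p$-Laplacian. First, for a step $h>0$ and given $v_{k-1}\in W^{1,p}_0(\Omega)$ with $0\le v_{k-1}\le \|v_0\|_\infty$, I would obtain $v_k\in W^{1,p}_0(\Omega)$ as the unique minimizer of the strictly convex coercive functional
\begin{equation*}
J_k(w)=\frac{1}{p}\int_\Omega|\nabla w|^p\,dx+\frac{1}{h(q+1)}\int_\Omega|w|^{q+1}\,dx-\frac{1}{h}\int_\Omega v_{k-1}^q\,w\,dx,
\end{equation*}
whose Euler--Lagrange equation is exactly the backward-Euler scheme for~\eqref{bpS}. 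Testing this equation against $(-v_k)_+$ and $(v_k-\|v_0\|_\infty)_+$ propagates $0\le v_k\le\|v_0\|_\infty$ inductively, which yields~\eqref{MP} in the limit.

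Next I would establish three uniform a priori estimates on the piecewise-constant time interpolant $v^{(h)}$. Testing the discrete equation with $v_k$ and using the convexity of $x\mapsto x^{(q+1)/q}$ to produce
\begin{equation*}
\int_\Omega v_k(v_k^q-v_{k-1}^q)\,dx\ge \frac{q}{q+1}\bigl(\|v_k\|_{q+1}^{q+1}-\|v_{k-1}\|_{q+1}^{q+1}\bigr)
\end{equation*}
yields the discrete analogue of~\eqref{energy eq1 of v} and hence~\eqref{energy ineq1 of v}. Testing instead with $v_k-v_{k-1}$ and invoking the convexity of $|\cdot|^p/p$ gives
\begin{equation*}
\frac{1}{h}\int_\Omega(v_k^q-v_{k-1}^q)(v_k-v_{k-1})\,dx+\frac{1}{p}\bigl(\|\nabla v_k\|_p^p-\|\nabla v_{k-1}\|_p^p\bigr)\le 0,
\end{equation*}
whence the nonincreasing behaviour of $\|\nabla v^{(h)}(s)\|_p$, i.e.\ \eqref{energy ineq2 of v}, and, after invoking the elementary inequality $(a-b)(a^q-b^q)\ge c(q)(\max\{a,b\})^{1-q}(a^q-b^q)^2$ together with $v_k\le\|v_0\|_\infty$, the bound~\eqref{energy ineq3 of v} on $\partial_s((v^{(h)})^q)$.

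With these bounds in hand I would pass to the limit $h\to 0$. The uniform control of $\partial_s((v^{(h)})^q)$ in $L^2(\Omega_S)$, together with the $L^p(0,S;W^{1,p}_0(\Omega))$ bound on $v^{(h)}$ and the $L^\infty$ bound, enables an Aubin--Lions type compactness argument on $(v^{(h)})^q$, giving strong $L^2(\Omega_S)$ convergence; the $L^\infty$ bound upgrades this to strong $L^r(\Omega_S)$ convergence of $v^{(h)}$ for any finite $r$. Weak $L^p$ convergence of $\nabla v^{(h)}$ is standard, and identification of the weak limit of $|\nabla v^{(h)}|^{p-2}\nabla v^{(h)}$ with $|\nabla v|^{p-2}\nabla v$ follows from the Minty--Browder monotonicity method based on~\eqref{BL2pre}. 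Passing to the limit in the discrete energy identities, supplemented by the lower semicontinuity of the Dirichlet integral, yields the continuous relations~\eqref{energy eq1 of v}--\eqref{energy ineq3 of v}; the initial trace in $W^{1,p}_0(\Omega)$ is recovered from the monotonicity~\eqref{energy ineq2 of v} together with the time-continuity afforded by~\eqref{energy ineq3 of v}.

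The main obstacle is the rigorous justification of the discrete test function $v_k-v_{k-1}$ used to derive~\eqref{energy ineq3 of v}: although admissible at the variational level, its continuous analogue $\partial_s v$ is not known a priori to belong to an acceptable class for the limiting equation, so one is forced either to carry the entire scheme through a Galerkin approximation in space (where the test function lies in the finite-dimensional subspace and all operations are legal) or to combine a Steklov-type mollification with the discrete bound before letting $h\to 0$. Once~\eqref{energy ineq3 of v} is secured, the remaining steps, including preservation of the pointwise bound $0\le v\le\|v_0\|_\infty$ by Fatou and a.e.\ convergence along a subsequence, are routine.
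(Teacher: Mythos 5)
Your proposal follows essentially the same route as the paper: the existence proof it cites from \cite{Nakamura-Misawa, Kuusi-Misawa-Nakamura} is precisely a backward time discretization (combined with a Galerkin step to legitimize the test functions, as you anticipate), with truncation arguments for~\eqref{MP}, discrete convexity inequalities for~\eqref{energy eq1 of v}--\eqref{energy ineq3 of v}, and a monotonicity/compactness passage to the limit. The one point to watch is that lower semicontinuity of the Dirichlet integral only yields the energy \emph{inequality} in~\eqref{energy eq1 of v}; the reverse inequality, hence equality, requires testing the limit equation with $v$ itself, justified by a time mollification using $\partial_s v^q\in L^2(\Omega_S)$ and $v\in L^\infty(\Omega_S)$ --- which is exactly the argument the paper defers to Appendix~B of \cite{Kuusi-Misawa-Nakamura}.
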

\begin{proof}
Eq.~\eqref{MP} follows from~\cite[Propositions 3.4, 3.5]{Kuusi-Misawa-Nakamura}. Using a similar argument to~\cite[Appendix B]{Kuusi-Misawa-Nakamura}, one can prove~\eqref{energy eq1 of v} and from this,~\eqref{energy ineq1 of v} immediately follows.
By the same way as in~\cite[Lemma 3.2, (3.7); Lemma 4.1; Proof of Theorem 1.1]{Nakamura-Misawa},~\eqref{energy ineq2 of v}  is actually verified. Finally,~\eqref{energy ineq3 of v} is proved via~\cite[Lemmas 3.4, 4.1]{Kuusi-Misawa-Nakamura}.
\end{proof}

\begin{rmk}\normalfont
The existence of a weak solution $v$ to~\eqref{bpS} in Theorem~\ref{our theorem} is proved by a time discretization and weak convergence of bounded approximating solutions in a reflexive Banach space. In our preceding work \cite{Nakamura-Misawa}, we showed the existence on $[0, S]$ of a weak solution for any positive $S<\infty$. As seen from the proof~\cite[Section 5, pp.167--168]{Nakamura-Misawa}, we can choose $S=+\infty$.
\end{rmk}
%

%%%%%%%%%%%%%%%%%%%%%%%%%%%%%%%%%%
\subsection{Extinction of Solutions}
%%%%%%%%%%%%%%%%%%%%%%%%%%%%%%%%%%
%%%%%%%%%%%%%%%%%%%%%%%%%%%%%%%%%%

We study the finite-time extinction of a solution to~\eqref{bpS}. Firstly, the extinction time is defined as follows:
\begin{dfn}\label{def. of the  extinction time}\normalfont
Let $v$ be a nonnegative weak solution to~\eqref{bpS} in $\Omega_{\infty}$. We call a positive number $S^{\ast}$ as the \emph{extinction time} of $v$ provided that the following conditions hold:
\begin{enumerate}[(i)]
\item $v(x,s)$ is nonnegative and not identically zero on $\Omega \times (0,S^{\ast})$ 
\item $v(x,s)=0$ for any $x \in \overline{\Omega}$ and all $s \geq S^{\ast}$.
\end{enumerate}
\end{dfn}
Now we will show a finite time extinction of a solution of~\eqref{bpS}. 
For this purpose we apply the comparison theorem \cite[Theorem 3.6]{Kuusi-Misawa-Nakamura}, which is originally provided by Alt-Luckhaus (\cite[Theorem 2.2, p.325]{Alt-Luckhaus}).
\begin{thm}[Comparison theorem]\label{Comparison theorem}
Let $0 < S \leq \infty$ and, let $v_{1}$ and $v_{2}$ be a weak supersolution and subsolution to~\eqref{bpS} in $\Omega_S$, respectively. %If $u \geq v$ in the sense of (D3')  on $\partial_p \Omega_T$, then it holds true that\
Suppose that $v_{1} \geq v_{2}$ on $\partial_{p}\Omega_{S}$.
Then 
\begin{equation*}
v_{1} \geq v_{2}\quad in \quad \Omega_S.
\end{equation*}
\end{thm}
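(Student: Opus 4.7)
The plan is to invoke the classical Alt--Luckhaus comparison scheme for doubly nonlinear parabolic equations, as realized in \cite[Theorem 3.6]{Kuusi-Misawa-Nakamura}. Fix $t_0 \in (0, S)$, set $w := v_2 - v_1$, and note that the boundary and initial hypotheses guarantee $w_+(\cdot, s) \in W^{1,p}_0(\Omega)$ for almost every $s$ and $w_+(\cdot, 0) = 0$. My first step is to subtract the super- and subsolution inequalities and test the resulting combined inequality against an admissible mollification of $\chi_{(0, t_0)}(s) \, w_+(x, s)$. This yields $I_{\mathrm{time}} + I_{\mathrm{space}} \geq 0$, where
\begin{align*}
I_{\mathrm{time}} & := \int_0^{t_0} \! \! \int_\Omega \partial_s\!\bigl(|v_1|^{q-1} v_1 - |v_2|^{q-1} v_2\bigr) \, w_+ \, dx \, ds, \\
I_{\mathrm{space}} & := \int_0^{t_0} \! \! \int_\Omega \bigl(|\nabla v_1|^{p-2} \nabla v_1 - |\nabla v_2|^{p-2} \nabla v_2\bigr) \cdot \nabla w_+ \, dx \, ds.
\end{align*}

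Next I would analyze each term separately. On $\{w > 0\}$ one has $\nabla w_+ = \nabla v_2 - \nabla v_1$, so by the monotonicity inequality~\eqref{BL2pre} of Lemma~\ref{Barrett-Liu},
\begin{align*}
I_{\mathrm{space}} = -\int_0^{t_0} \! \! \int_{\{w > 0\}} \bigl(|\nabla v_2|^{p-2} \nabla v_2 - |\nabla v_1|^{p-2} \nabla v_1\bigr) \cdot (\nabla v_2 - \nabla v_1) \, dx \, ds \le 0.
\end{align*}
For $I_{\mathrm{time}}$ I would introduce Steklov averages $v_{i,h}$ in $s$, read off the pointwise-in-$s$ inequalities satisfied by the mollified quantities, and apply the classical chain rule to identify, in the limit $h \downarrow 0$,
\begin{align*}
I_{\mathrm{time}} = \int_\Omega \Psi(v_1, v_2)(x, t_0) \, dx,
\end{align*}
where $\Psi$ is the Alt--Luckhaus entropy associated with the convex primitive of $r \mapsto |r|^{q-1} r$, so that $\Psi \ge 0$ with equality exactly when $v_2 \le v_1$. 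Since the initial comparison forces $\Psi(v_1, v_2)(\cdot, 0) \equiv 0$, combining the two estimates yields $\int_\Omega \Psi(v_1, v_2)(\cdot, t_0) \, dx \le 0$; nonnegativity of $\Psi$ then gives $v_2 \le v_1$ a.e.\ on $\Omega \times \{t_0\}$, and arbitrariness of $t_0$ finishes the proof.

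The hard part is the chain-rule identification of $I_{\mathrm{time}}$: the time derivatives $\partial_s(|v_i|^{q-1} v_i)$ live only in $L^2(\Omega_S)$, and the test function $w_+$ has no classical time derivative, so direct integration by parts is not available. The Steklov averaging produces approximants that are smooth in $s$ and satisfy the analogue of~\eqref{bpS} up to a vanishing error, on which the chain rule holds pointwise; the limit $h \downarrow 0$ is then taken using strong $L^2$ convergence of the mollified time derivatives together with weak $L^p$ convergence of the mollified gradients. These are precisely the mechanics developed in~\cite[Theorem 2.2]{Alt-Luckhaus} and adapted to the present setting in~\cite[Theorem 3.6]{Kuusi-Misawa-Nakamura}, so in practice I would conclude by invoking those results directly.
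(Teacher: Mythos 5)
Your proposal follows essentially the same route as the paper, which states this theorem without a proof and simply points to \cite[Theorem 3.6]{Kuusi-Misawa-Nakamura} (itself adapted from Alt--Luckhaus \cite[Theorem 2.2]{Alt-Luckhaus}); you sketch the same mechanism before deferring to the same references. One small sign slip worth noting: with your conventions $I_{\mathrm{time}}+I_{\mathrm{space}}\ge 0$ and $I_{\mathrm{space}}\le 0$ force $I_{\mathrm{time}}\ge 0$, so the chain-rule identification should read $I_{\mathrm{time}}=-\int_\Omega\Psi(v_1,v_2)(\cdot,t_0)\,dx$ (using $\Psi(\cdot,0)\equiv 0$) in order to reach the inequality $\int_\Omega\Psi(\cdot,t_0)\,dx\le 0$ that you correctly state.
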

For the construction of an appropriate comparison function we use a special solution to the elliptic type equation associated with~\eqref{bpS}. This special solution is called \emph{Talenti function} \cite{Talenti}, defined as
\begin{equation}\label{e.Talenti}
Y_{a,b,y}(x):= \left(a + b |x-y|^{\frac{p}{p-1}} \right)^{-\frac{n-p}{p}}, \quad x, y \in \mathbb{R}^{n},
\end{equation}
where $a$ and $b$ are positive numbers.
In his seminal paper~\cite{Talenti}, Talenti showed that this function realizes the best constant in the Sobolev inequality. Moreover, a direct computation shows that $Y_{a,b}$ solves the equation
\begin{equation*} % \label{e.}
-\Delta_p Y_{a,b,z} = n\left(\tfrac{n-p}{p-1} \right)^{p-1} a b^{p-1} Y_{a,b,z}^{q} \quad \mbox{in } \mathbb{R}^n.
\end{equation*}
As Sciunzi showed in~\cite{Sciunzi}, there is a one-parameter family of functions classifying the solutions, up to translations, of $-\Delta_p Y_\lambda = Y_\lambda^{q}$. Indeed, one chooses $a$ and $b$ so that $n\left(\tfrac{n-p}{p-1} \right)^{p-1} a b^{p-1} = 1$ and then, by \cite{Sciunzi}, the solution of $-\Delta_p Y = Y^{q}$ is necessarily of the form
\begin{equation}  \label{e.Talentispecial}
Y(x) = Y_{\lambda, y}(x) := \frac1{\lambda} \left( n \left( \frac{n-p}{p-1} \right)^{p-1} \right)^{\frac 1p}  \left( 1 + \left( \frac{|x-y|}{\lambda}\right)^{ \frac{p}{p-1}}
\right)^{-\frac{n-p}{p}}
\end{equation}
with a parameter $\lambda>0$.
\bigskip
\noindent
We  show that a solution of~\eqref{bpS} vanishes in finite time.
\begin{prop}[Finite time extinction of solutions]\label{Finite time extinction of solutions}
Let $v$ be a nonnegative  weak solution to~\eqref{bpS} in $\Omega_{\infty}$. Then there exists a extinction time $S^{\ast} > 0$ for $v$, which is bounded as follows :
\begin{equation*}
S^{\ast} \leq \frac{q}{q+1-p} \Bigg( \frac{\max\limits_\Omega u_0}{\min\limits_\Omega Y}\Bigg)^{q+1-p},
\end{equation*}
where $Y$ is Talenti's function defined by~\eqref{e.Talentispecial}. 
\end{prop}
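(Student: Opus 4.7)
The plan is to build a self-similar supersolution of \eqref{bpS} from the Talenti function and invoke the comparison principle (Theorem~\ref{Comparison theorem}). More precisely, I would take an ansatz of separated-variable form
\[
V(x,s) := g(s)\,Y(x),
\]
where $Y$ is the Talenti function from \eqref{e.Talentispecial} normalized so that $-\Delta_p Y = Y^{q}$ in $\mathbb{R}^{n}$, and $g(s)\ge0$ is a scalar profile to be chosen. Direct computation yields $\partial_s(V^{q})-\Delta_p V = Y^{q}\bigl(q g^{q-1}g' + g^{p-1}\bigr)$, so $V$ solves \eqref{bpS}$_{1}$ classically provided $g$ satisfies the ODE $q g^{q-p}g' = -1$, which must only hold in the region where $g>0$.

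The ODE is separable and integrates to
\[
g(s) = \left( g_{0}^{\,q+1-p} - \frac{q+1-p}{q}\,s \right)_{+}^{1/(q+1-p)},
\]
where $g_{0}=g(0)$. (Recall $q+1-p = p^{2}/(n-p) > 0$.) Hence $g$ vanishes identically from the time
\[
S_{g_{0}}^{\ast} = \frac{q}{q+1-p}\, g_{0}^{\,q+1-p}.
\]
Checking regularity, $V^{q}\propto(S_{g_0}^\ast-s)^{q/(q+1-p)}Y^{q}$ has a bounded $s$-derivative up to extinction, so $V$ is a legitimate weak supersolution on $\Omega\times(0,S_{g_0}^\ast)$ in the sense of Definition~\ref{def of v}.

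Next, I would choose the free parameter $g_{0} = \max_{\Omega}v_{0}/\min_{\Omega}Y$, which is finite and positive since $\Omega$ is bounded and $Y>0$ is continuous. With this choice, $V(\cdot,0) = g_{0}Y \ge v_{0}$ on $\Omega$, and on the lateral boundary $V = g(s)Y > 0 = v$. Thus $V\ge v$ on $\partial_{p}(\Omega\times(0,S_{g_0}^\ast))$, and Theorem~\ref{Comparison theorem} yields $v(x,s)\le V(x,s)$ throughout $\Omega\times(0,S_{g_{0}}^{\ast})$. Sending $s\uparrow S_{g_{0}}^{\ast}$, the nonnegativity of $v$ forces $v(\cdot,S_{g_{0}}^{\ast})\equiv 0$; propagating this fact with the maximum principle \eqref{MP} applied on $[S_{g_0}^\ast,\infty)$ (or reapplying comparison with $V\equiv 0$) shows $v\equiv0$ for all $s\ge S_{g_{0}}^{\ast}$. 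Finally, since $v_{0}\not\equiv 0$ implies by the energy equality \eqref{energy eq1 of v} that $\|v(s)\|_{q+1}>0$ on a maximal interval $(0,S^\ast)$, the extinction time $S^{\ast}$ of Definition~\ref{def. of the  extinction time} exists and satisfies $S^{\ast}\le S_{g_{0}}^{\ast}$, which is exactly the asserted bound.

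No single step is truly delicate: the only points requiring care are verifying that $V$ meets the integrability requirements in Definition~\ref{def of v} near the extinction instant (handled by the explicit exponent computation above), and noting that $V>v$ on $\partial\Omega$ in the trace sense despite $V$ being strictly positive there, so that comparison applies cleanly. The Talenti normalization is what makes the separation of variables collapse into a scalar ODE, and that is the conceptual crux of the argument.
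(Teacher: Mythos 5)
Your proposal is correct and follows essentially the same route as the paper: both construct a separable comparison function $g(s)Y(x)$ from the Talenti profile, reduce to the scalar ODE for $g$, pick $g_0=\max_\Omega u_0/\min_\Omega Y$, and conclude via Theorem~\ref{Comparison theorem}. The only cosmetic difference is that the paper carries an explicit separation constant $\mu$ before normalizing, whereas you normalize $Y$ from the outset.
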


%%%
\begin{proof} By translation, we may assume the origin $0 \in \Omega$. Let $v=v(x,s)$ be a nonnegative weak solution to~\eqref{bpS}. By the nonnegativity $v$ is a weak solution of\begin{equation*}
\partial_{s}v^{q}-\Delta_{p}v = 0 \quad \textrm{in} \quad \Omega_{\infty}.
\end{equation*}
Next, let $W(x,s) = X(x)Z(s)$ be a nonnegative separable solution of %%
\begin{equation*}
\partial_{s} W^{q}-\Delta_{p} W =0\quad \textrm{in}\,\,\mathbb{R}^n \times (0,\infty).
\end{equation*}
Then $X(x)Z(s)$ satisfies
\begin{eqnarray}\label{vanishing time eq1}
\begin{cases}
(Z(s)^q)'=\mu Z(s)^{p-1}\quad &\textrm{in}\,\, (0,\infty) \\
\Delta_pX=\mu X^q \quad &\textrm{in}\,\,\mathbb{R}^n,
\end{cases}
\end{eqnarray}
where $\mu$ is a separation constant. By an integration by parts we see that $\mu<0$. Set~$X:=(-\mu)^{-\frac{1}{q+1-p}}Y$ to obtain
\begin{equation}\label{vanishing time eq2}
-\Delta_p Y=Y^q \quad \textrm{in}\,\,\mathbb{R}^n.
\end{equation}
As discussed before the proof, an energy-finite solution to (\ref{vanishing time eq2}) is given by~\eqref{e.Talentispecial}.
By a straightforward computation, we find that
% %
\begin{equation*}
Z(s)=Z(0) \bigg(1+\mu\frac{q+1-p}{q}Z(0)^{p-(q+1)}s \bigg)_+^\frac{1}{q+1-p}
\end{equation*}
solves the first equation in (\ref{vanishing time eq1}), where $Z(0)$ is the initial data. Thus the vanishing time $Z_0$ of $Z(s)$ is given by
\begin{equation*}
Z_0=(-\mu)^{-1}\frac{q}{q+1-p}Z(0)^{q+1-p}.
\end{equation*}
Let $V(x,s)$ be $(-\mu)^{-\frac{1}{q+1-p}}Y(x)Z(s)$. Then
\begin{equation*}
0=v(x,s) \leq V(x,s) \quad \textrm{on}\,\,\,\partial \Omega \times [0, \infty)
\end{equation*}
We choose the initial data for the ODE in~\eqref{vanishing time eq1} as 
\begin{equation}\label{choice of T(0)1}
Z(0)=\frac{\max\limits_\Omega u_0}{\min\limits_\Omega Y}(-\mu)^\frac{1}{q+1-p}
\end{equation}
and therefore, we find that
\begin{equation*}
u_0(x) \leq V(x,0)\quad \textrm{in}\,\,\,\Omega.
\end{equation*}
According to the comparison theorem \cite[Theorem 3.6]{Kuusi-Misawa-Nakamura}, we have
\begin{equation*}
v(x,s) \leq V(x,s) \quad \textrm{in}\,\,\,\Omega_S\,\,\,\textrm{for any positive}\,\,\,S < \infty
\end{equation*}
and thus, the vanishing time $S^{\ast}$ of $v(x,s)$ is estimated as
\begin{equation*}
S^{\ast} \leq S_0=\frac{q}{q+1-p} \Bigg( \frac{\max\limits_\Omega u_0}{\min\limits_\Omega Y}\Bigg)^{q+1-p},
\end{equation*}
where (\ref{choice of T(0)1}) is used. The proof is complete.
\end{proof}
%
%

%%%%%%%%%%%%%%%%%%%%%%%%%%%%%%%%%%%
%%%%%%%%%%%%%%%%%%%%%%%%%%%%%%%%%%%%%
\section{Nonlinear Intrinsic Scaling Transformation}\label{Sec. Nonlinear intrinsic scaling transformation}
%%%%%%%%%%%%%%%%%%%%%%%%%%%%%%%%%%%%
%%%%%%%%%%%%%%%%%%%%%%%%%%%%%%%%%%%%

In this section we will introduce a scaling transformation, which transforms the prototype equation~\eqref{bpS} into the $p$-Sobolev equation~\eqref{pS}. 
Hereafter we choose the initial data $v_0$ in~\eqref{bpS} as $u_0$ in~\eqref{pS}. We suppose that the initial data $u_0$ is in $W^{1,p}_0 (\Omega)$, positive and bounded in $\Omega$, and $\|u_0\|_{q + 1}=1$.  As in Theorem \ref{our theorem}, by \cite[Proposition 3.4]{Kuusi-Misawa-Nakamura}, the solution $v$ of~\eqref{bpS} must be nonnegative and thus, we can consider~\eqref{bpS} as %
\begin{equation}\label{bpS+}
\begin{cases}
\,\,\partial_s(v^{q})-\Delta_pv=0\quad &\textrm{in}\quad \Omega_S  \\ 
\,\,v=0\quad &\textrm{on}\quad \!\!\partial\Omega \times (0,S) \\
\,\,v(\cdot, 0)=u_0(\cdot ) \quad &\textrm{in}\quad \Omega.
\end{cases}
\end{equation}
From now on, we will mainly consider~\eqref{bpS+} instead of~\eqref{bpS}.
\medskip

%
%Let us define the extinction time of a solution $v$ to~\eqref{bpS+} by $S^{\ast}$,
%
%\[
%v (x,s) = 0 \,\,\textrm{for all} \,\,s \geq S^{\ast} \,\,\textrm{and any} \,\, x \in \Omega.
%\]
%
Let us consider the following nonlinear intrinsic scaling transforming~\eqref{bpS+} to~\eqref{pS}.
\begin{prop}[Nonlinear intrinsic scaling]\label{Nonlinear intrinsic scaling}
Let $v$ be a nonnegative weak solution to the equation~\eqref{bpS+} in $\Omega_{\infty}$ and let $S^\ast<+\infty$ be a finite extinction time of $v$. 
There exist unique $\Lambda \in C^1[0,\infty)$ solving
\begin{equation}\label{def. of Lambda}
\begin{cases}
\Lambda^\prime(\tau) = (S^\ast)^{-1}\left( \,\displaystyle \int_{\Omega} v^{q+1}\left(x,S^\ast\left(1-e^{-\Lambda(\tau)} \right)\right) \, dx \right)^{\frac{p}{n}}\\[5mm]
\Lambda(0) = 0
\end{cases}
\end{equation}
and, subsequently, $g \in C^1[0,\infty)$ solving
\begin{equation}\label{def. of g}
\begin{cases}
g^\prime(t) = e^{\Lambda(g(t))} \\
g(0) = 0
\end{cases}
\end{equation}
such that the following is valid: Let 
\begin{equation}\label{def. of t}
s(t)=S^\ast \left(1-e^{-\Lambda(g(t))}\right)
\end{equation}
%For every $t \geq 0$ satisfying 
%
%\begin{equation}\label{def. of t}
%
%\Lambda(g(t))=\log\left(\frac{S^\ast}{S^\ast-s} \right), \quad 0<s<S^\ast
%\end{equation}
%
and set
\begin{equation}\label{def. of u}
u(x,t) := \frac{v\left(x,s(t)\right)}{\gamma(t)}, \quad \gamma(t) := \left(\,\displaystyle \int_{\Omega} v^{q+1}\left(x,s(t)\right) \, dx \right)^{\frac{1}{q+1}}.
\end{equation}
Then $u$ is a nonnegative weak solution of the $p$-Sobolev flow~\eqref{pS} on $\Omega_\infty$. More precisely, $u$ satisfies the conditions (D1)--(D4) of Definition~\ref{def of weak sol.}, where
$\displaystyle \lambda(t) := -q \frac{\gamma'(t)}{\gamma(t)}=\int_{\Omega}|\nabla u(x,t)|^{p}\,dx$.
\end{prop}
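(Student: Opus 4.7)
My plan is to construct $\Lambda$, $g$, and then $u$, and to verify conditions (D1)--(D4) of Definition~\ref{def of weak sol.} by pushing the regularity of $v$ established in Theorem~\ref{our theorem} through the scaling.

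First, I would solve the ODEs globally on $[0,\infty)$. By the energy estimate~\eqref{energy ineq1 of v} one has $\|v(s)\|_{q+1}\leq \|u_0\|_{q+1}=1$, so the right-hand side of the $\Lambda$-equation is uniformly bounded by $(S^\ast)^{-1}$; continuity and monotonicity of $s\mapsto \|v(s)\|_{q+1}^{q+1}$, coming from the energy equality~\eqref{energy eq1 of v}, then yield, via a Cauchy--Lipschitz argument, a unique nondecreasing $\Lambda\in C^1[0,\infty)$. Since $s(t)=S^\ast(1-e^{-\Lambda(g(t))})\in[0,S^\ast)$, the strict positivity of $v$ before extinction (inherited from the positivity of $u_0$) ensures $\gamma(t)>0$; global existence of $g\in C^1[0,\infty)$ then follows because $\Lambda\circ g$ is locally bounded.

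The central algebraic identity the construction is designed to produce is
\[
s'(t) \;=\; S^\ast e^{-\Lambda(g(t))}\Lambda'(g(t))\,g'(t) \;=\; S^\ast \Lambda'(g(t)) \;=\; \gamma(t)^{p(q+1)/n} \;=\; \gamma(t)^{q+1-p},
\]
where the last equality uses $q+1=p^\ast$, so that $p(q+1)/n=p^2/(n-p)=q+1-p$. Using this identity, together with $\Delta_p u=\gamma^{-(p-1)}\Delta_p v$ and the chain rule at a formal level, gives
\[
\partial_t(u^q)-\Delta_p u \;=\; \gamma^{-(p-1)}\bigl(\partial_s(v^q)-\Delta_p v\bigr) \;-\; q\frac{\gamma'(t)}{\gamma(t)}\,u^q \;=\; -q\frac{\gamma'(t)}{\gamma(t)}\,u^q,
\]
since $v$ solves~\eqref{bpS+}. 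At this formal level $u$ already satisfies the $p$-Sobolev equation with $\lambda(t)=-q\gamma'(t)/\gamma(t)$.

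Making this rigorous at the level of the weak formulation (D2) is where I expect the main obstacle. Since $\partial_t(u^q)$ is only an $L^2$-function and $\gamma$ is not a priori differentiable in a classical sense, the chain rule cannot be applied pointwise; I would handle this by mollifying $v^q$ in time, performing the computation against a test function $\varphi\in C_0^\infty(\Omega_\infty)$ at the mollified level, and passing to the limit after changing variables $t\mapsto s$. This is precisely the technique deferred to Appendix~\ref{Sec. rigorous argument}. The remaining items are comparatively routine: (D1) combines $\gamma(t)>0$ with the regularity of $v$ in Theorem~\ref{our theorem}; (D3) is built into the definition of $\gamma$; (D4) follows from $s(0)=0$, $\gamma(0)=\|u_0\|_{q+1}=1$, and the boundary values of $v$; and the identification $\lambda(t)=\int_\Omega|\nabla u(x,t)|^p\,dx$ is recovered either from Proposition~\ref{lambda equality} or directly by testing the equation with $u$ and invoking $\tfrac{d}{dt}\|u(t)\|_{q+1}^{q+1}=0$.
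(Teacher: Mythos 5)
Your proposal follows essentially the same route as the paper's own proof: the same ODE construction, the same key identity $s_t=\gamma^{(q+1)\frac{p}{n}}=\gamma^{q+1-p}$, the same formal chain-rule computation reducing \eqref{pS}$_1$ to \eqref{bpS+}$_1$ with $\lambda(t)=-q\gamma'/\gamma$, and the same strategy of deferring the rigorous chain rule to a time-mollification argument while checking (D1)--(D4) from the energy estimates of Theorem~\ref{our theorem} and the lower bound $\gamma\ge c_0$. The paper likewise presents this step as a formal computation with the rigorous justification postponed to Appendix~\ref{Sec. rigorous argument}, so your level of detail matches its proof.
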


%
%Let $S^{\ast}$ be the extinction time of the solution~$v$ of~\eqref{bpS+} and set $T^{\ast} =\Lambda^{-1}(S^{\ast})$. Here we notice that $\Lambda (t)$ is increasing on $[0, T^{\ast}]$ by the ODE~\eqref{def. of lambda original}. Let $t_0<T^{\ast}$ be a positive number arbitrarily taken and fixed. Put $s_0=\Lambda(t_0)$.
%
%
\begin{proof}[\normalfont \textbf{Proof of Proposition~\ref{Nonlinear intrinsic scaling}}]
Here, we will make a formal computation and show the relevance of intrinsic scaling above to the $p$-Sobolev flow. The rigorous argument will be given in Appendix~\ref{Sec. rigorous argument}.
\smallskip

Firstly, let us verify that $u$ satisfies~\eqref{pS}$_1$.
Noticing~\eqref{def. of t}
\[
s(t)=S^\ast(1-e^{-\Lambda(g(t))}) \quad \iff \quad \Lambda(g(t))=\log\left(\frac{S^\ast}{S^\ast-s(t)} \right) ,
\]
we compute as
\begin{align*}
\frac{d}{dt}\Lambda(g(t))&=\Lambda^\prime(g(t))g^\prime(t) \\[2mm]
&=(S^\ast)^{-1}e^{\Lambda(g(t))}\left(\,\,\displaystyle \int_{\Omega} v^{q+1}(x,s(t)) \, dx \right)^{\frac{p}{n}}\\[2mm]
&=(S^\ast)^{-1}e^{\Lambda(g(t))}\gamma(t)^{(q+1)\frac{p}{n}}
\end{align*}
and thus,
\begin{equation}\label{nis eq.0}
s_t=\frac{ds}{dt}=S^\ast e^{-\Lambda(g(t))}\frac{d}{dt}\Lambda(g(t))=\gamma(t)^{(q+1)\frac{p}{n}}.
\end{equation}
By~\eqref{energy eq1 of v} and~\eqref{energy ineq2 of v} in Theorem~\ref{our theorem}, $\displaystyle s \mapsto \left(\int_\Omega v^{q+1}(x,s)\,dx\right)^{\frac{1}{q+1}}$ is Lipschitz continuous.
This together with~\eqref{nis eq.0} provides 
\begin{align}\label{nis eq.1}
\partial_{t}u^q&=\partial_sv^qs_t\gamma^{-q}+v^q(-q)\gamma^{-q-1}\gamma^\prime(t) \notag\\
&=\partial_sv^{q}\gamma^{-q+(q+1)\frac{p}{n}}-qu^{q}\gamma^{-1}\gamma^\prime(t). 
\end{align}
Multiplying~\eqref{pS}$_1$ by $v$ and integration by parts give 
\begin{equation}\label{nis eq.2}
\frac{q}{q+1}\frac{d}{ds}\int_{\Omega} v^{q+1}(s)\,dx+\int_{\Omega}|\nabla v(s)|^{p}\,dx=0,
\end{equation}
that is the same reasoning as~\eqref{energy eq1 of v} in Theorem~\ref{our theorem}.
Furthermore
\begin{align} \label{nis eq.3}
\gamma^\prime(t)&=\dfrac{1}{q+1} \left(\,\,\displaystyle \int_{\Omega} v^{q+1}(x,s(t)) \, dx \right)^{\frac{1}{q+1}-1}\frac{d}{ds} \int_\Omega v^{q+1}(s)\,dx \Bigg|_{s=s(t)}s_t \notag \\[2mm]
&\!\!\stackrel{\eqref{nis eq.2}}{=}-\frac{1}{q}\gamma^{1-(q+1)}\int_\Omega|\nabla v(s(t))|^{p}\,dx\cdot s_t\notag\\[2mm]
&=-\frac{1}{q}\gamma\int_\Omega|\nabla u(t)|^{p}\,dx
\end{align}
and
\begin{equation}\label{nis eq.4}
\Delta_pu=\gamma^{1-p}\Delta_pv.
\end{equation}
From~\eqref{nis eq.1} and~\eqref{nis eq.3} it follows that
\begin{align}\label{nis eq.5}
\partial_tu^q&=\partial_sv^{q}\gamma^{1-p}+\left(\,\,\int_{\Omega}|\nabla u(t)|^{p}\,dx\right)u^q \\
&=\Bigg[\partial_sv^q+\left(\,\,\int_{\Omega}|\nabla v(s(t))|^{p}\,dx \right)v^q\gamma^{-(q+1)} \Bigg]\gamma^{1-p}.\label{nis eq.5'}
\end{align}
Eq.~\eqref{nis eq.5} together with~\eqref{nis eq.4} and Eq.~\eqref{pS}$_1$ yield that 
\begin{equation*}
\partial_tu^q-\Delta_pu=\left(\,\,\int_{\Omega}|\nabla u(t)|^p\,dx\right)u^q,
\end{equation*}
which is exatcly~\eqref{pS}$_1$.
\medskip

We will verify that $u$ satisfies the condition (D1) in Definition~\ref{def of weak sol.}. Let $t_0<\infty$ be any positive number  and set $s_0=S^\ast(1-e^{-\Lambda(g(t_0))})$. We shall notice the fact : As shown later in~\eqref{c0} in Lemma~\ref{bounded from below lem}, we find that there is a positive number $c_0$ such that $c_0:=\min \limits_{0 \leq s \leq s_0}\|v(s)\|_{q+1}>0$. 
From $\gamma (t) \geq c_0 > 0$ and~\eqref{energy ineq2 of v}, it follows that 
\begin{align}\label{lambda bounds}
\int_{\Omega}|\nabla u(t)|^p\,dx&=\frac{1}{\gamma(t)^p}\int_{\Omega}|\nabla v(s(t))|^p\,dx \notag\\
& \leq c_0^{-p}\|\nabla u_{0}\|_p^p <\infty
\end{align}
and thus, $u \in L^{\infty}(0,t_0\,;\,W^{1,p}(\Omega))$. By changing of variable $s=S^\ast(1-e^{-\Lambda(g(t))})$ and~\eqref{nis eq.0},
and merging~\eqref{nis eq.5'},\eqref{c0} in Lemma~\ref{bounded from below lem},~\eqref{MP},~\eqref{energy ineq1 of v},~\eqref{energy ineq2 of v} and~\eqref{energy ineq3 of v} we get
\begin{align*}
&\int_0^{t_0}\!\!\!\int_\Omega(\partial_t u^q)^2\,dxdt=\int_0^{s_0}\!\!\! \int_\Omega(\partial_t u^q)^2\gamma^{-(q+1)\frac{p}{n}}\,dxds\notag\\
&\leq 2\int_0^{s_0}\!\!\!\int_{\Omega}\Bigg[\left(\partial_sv^q\right)^2+\left\|\nabla v(s(t))\right\|_p^{2p}v^{2q}\gamma^{-2(q+1)} \Bigg]\gamma^{2(1-p)} \cdot\gamma^{-(q+1)\frac{p}{n}}\,dxds\notag \\
&\leq 2\|u_0\|_{q+1}^{(q+1)\frac{p}{n}} c_0^{-2q} \int_{0}^{s_0}\!\!\!\int_{\Omega}(\partial_s v^q)^{2}\,dxds \notag\\
&\quad \quad +2\|u_{0}\|_{q+1}^{(q+1)\frac{p}{n}} c_0^{-4q-2} \|u_{0}\|_{\infty}^{2q}\|\nabla u_{0}\|_{p}^{2p}|\Omega|s_0<\infty,
\end{align*}
which yields (D1) for any positive $T < \infty$.
\smallskip

By the very definition of $u$ as in~\eqref{def. of u}, 
$\displaystyle \int_{\Omega} u^{q+1}(x,t)\, dx = 1$ for any $t \in [0,\infty)$, that is  (D3) with $T=\infty$. 

\smallskip

Since  $v(s) \in W_{0}^{1,p}(\Omega)$ for a.e. $s>0$, $u(t)=v(s(t))/\gamma(t) \in W_{0}^{1,p}(\Omega)$ for a.e. $t \in [0,t_0]$.
In addition, 
\begin{align}\label{nis eq.6}
\|u(t)-u_0\|_{W^{1,p}(\Omega)}&=\left\| \frac{v(s(t))}{\gamma(t)} -u_0\right\|_{W^{1,p}(\Omega)} \notag\\
&\leq \frac{1}{\gamma(t)}\bigg\{\left\|v(s(t)) -u_{0}\right\|_{W^{1,p}(\Omega)}+\|u_0\|_{W^{1,p}(\Omega)} |\gamma(t)-\gamma(0)|\bigg\},
\end{align}
where $\gamma(0)=\|u_0\|_{q+1}=1$. Remark that $s(t)=S^\ast(1-e^{-\Lambda(g(t))}) \searrow 0 \iff t \searrow 0$ because $\Lambda(\tau)$, $0 \leq \tau <\infty$ and $g(t)$, $0 \leq t <\infty$, are monotone increasing and $\Lambda (0)=g(0)=0$ by~\eqref{def. of g} and~\eqref{def. of Lambda}. By the Minkowski and Sobolev inequalities, we get, as $t \searrow 0$,
\begin{align}\label{nis eq.7}
|\gamma(t)-\gamma(0)| &\leq \bigg|\left\|v(s(t))\right\|_{q+1}-\|u(0)\|_{q+1}\bigg| \notag\\
&\leq C\left\|v(s(t))-u_{0}\right\|_{W^{1,p}(\Omega)} \to 0
\end{align}
since $\|v(s)-u_{0}\|_{W^{1,p}(\Omega)} \to 0$ as $s \searrow 0$. Merging $\gamma (t) \geq c_0 > 0$,~\eqref{nis eq.6} and~\eqref{nis eq.7}, we obtain $\|u(t)-u_{0}\|_{W^{1,p}(\Omega)}$ as $t \searrow 0$, that gives (D4). 
\medskip

Therefore we finish the proof.
\end{proof}

%

%%%%%%%%%%%%%%%%%%%%%%%%
%%%%%%%%%%%%%%%%%%%%%%%%
\section{Proof of Theorem~\ref{mainthm}} \label{Sec. Proof of mainthm}
%%%%%%%%%%%%%%%%%%%%%%%%
%%%%%%%%%%%%%%%%%%%%%%%%
In this section, we shall prove our main theorem,Theorem~\ref{mainthm}. 
\medskip

The scheme of our proof is the following: Firstly, by Theorem~\ref{our theorem}, we will solve the prototype equation~\eqref{bpS} with the initial data $u_0$ and then, by Proposition~\ref{Nonlinear intrinsic scaling}, we transform the solution $v$ to the desired solution $u$ of the $p$-Sobolev flow~\eqref{pS}, that can be possible up to any finite time, since the extinction time of solution $v$ is converted to the infinity. 
Here, the expansion of positivity of the solution of~\eqref{pS} on the domain is used. In particular, the time-length of expansion of positivity is estimated only by the volume, $\|u_0\|_{L^{q+1}(\Omega)}=1$, the boundedness, and the positive lower bound of the initial data $u_0$  in the interior of domain.
See Proposition~\ref{Interior positivity by the volume constraint} for details.
The solution $u$ is actually bounded at any finite time as in~\eqref{eq. Boundedness of pS} of Proposition~\ref{Boundedness of pS}. In this way, we have the global existence of solution of the $p$-Sobolev flow~\eqref{pS}.
\bigskip

We have the boundedness of weak solutions of $p$-Sobolev flow~\eqref{pS}. Here we use by the fact that by Proposition~\ref{lambda equality}  we have that $\lambda (t) = \|\nabla u (t)\|_{p}^p$ in~\eqref{pS}.
\begin{prop}[Boundedness of the $p$-Sobolev flow]\label{Boundedness of pS}
Let $u$ be a nonnegative weak solution of~\eqref{pS} in $\Omega_{T}$. Then $u$ is bounded from above in $\Omega_{T}$ and 
\begin{equation}\label{eq. Boundedness of pS}
\|u(t)\|_{\infty} \leq e^{\frac{1}{q}\int_0^T\|\nabla u(t)\|_{p}^{p}\,dt}\|u_{0}\|_{\infty} \quad \textrm{for every} \quad 0\leq t < T.
\end{equation}
\end{prop}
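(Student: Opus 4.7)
The plan is to reduce~\eqref{pS} to the prototype doubly nonlinear equation~\eqref{bpS+}, for which the maximum principle~\eqref{MP} of Theorem~\ref{our theorem} is already available. The reduction combines an exponential rescaling of the unknown with a strictly monotone time change.

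By Proposition~\ref{lambda equality}, $\lambda(t) = \|\nabla u(t)\|_p^p \geq 0$ is locally integrable, so I would introduce
\[
\phi(t) := \frac{1}{q}\int_0^t \|\nabla u(\tau)\|_p^p\,d\tau, \qquad \tilde u(x,t) := e^{-\phi(t)}\,u(x,t).
\]
From $\partial_t(u^q) = \lambda(t) u^q + e^{q\phi(t)} \partial_t(\tilde u^q)$ and $\Delta_p u = e^{(p-1)\phi(t)} \Delta_p \tilde u$, the equation~\eqref{pS}$_1$ becomes
\[
\partial_t(\tilde u^q) - e^{-(q+1-p)\phi(t)}\,\Delta_p \tilde u = 0 \quad \text{in } \Omega_T,
\]
with unchanged lateral boundary and initial data. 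Note that $q+1-p > 0$, since $q = p^\ast - 1 > p-1$ for $p \in [2,n)$, so the coefficient in front of $\Delta_p \tilde u$ is positive and $\leq 1$.

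To absorb this time-dependent coefficient I would perform the change of time variable $\tau(t) := \int_0^t e^{-(q+1-p)\phi(s)}\,ds$, which is $C^1$ and strictly increasing with $\tau(0)=0$. In the $\tau$-variable, $\tilde u$ satisfies
\[
\partial_\tau(\tilde u^q) - \Delta_p \tilde u = 0,
\]
that is, exactly~\eqref{bpS+} on $\Omega \times (0,\tau(T))$ with initial datum $u_0$ and zero Dirichlet condition on $\partial\Omega$. The maximum principle~\eqref{MP} from Theorem~\ref{our theorem} then gives $0 \leq \tilde u \leq \|u_0\|_\infty$, so that
\[
u(x,t) = e^{\phi(t)}\,\tilde u(x,t) \leq e^{\phi(t)}\|u_0\|_\infty \leq e^{\frac{1}{q}\int_0^T \|\nabla u(s)\|_p^p\,ds}\|u_0\|_\infty,
\]
which is precisely~\eqref{eq. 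Boundedness of pS}.

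The main technical, though not conceptual, obstacle is to transport the weak formulation of~\eqref{pS} through the rescaling and the time change so as to produce a bona fide weak solution of~\eqref{bpS+} in the sense of Definition~\ref{def of v}. Since the spatial rescaling factor $e^{\pm \phi(t)}$ is $C^1$ in $t$ and the time change is $C^1$ and strictly monotone, this follows by the same mollification-and-substitution procedure used for the nonlinear intrinsic scaling in Proposition~\ref{Nonlinear intrinsic scaling} and rigorously justified in Appendix~\ref{Sec. rigorous argument}.
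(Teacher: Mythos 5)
Your argument is correct, and it is in substance the inverse of the machinery the paper itself builds: in Proposition~\ref{Nonlinear intrinsic scaling} one has $\gamma'(t)=-\tfrac1q\gamma(t)\lambda(t)$ with $\gamma(0)=1$, i.e.\ $\gamma(t)=e^{-\frac1q\int_0^t\lambda}$, so your substitution $\tilde u=e^{-\phi(t)}u$ together with the time change $\tau(t)=\int_0^t e^{-(q+1-p)\phi}$ is exactly the map $u\mapsto v$ undoing the nonlinear intrinsic scaling; the paper itself does not prove Proposition~\ref{Boundedness of pS} here but cites \cite{Kuusi-Misawa-Nakamura}, where the bound is obtained by essentially the equivalent device of comparing $u$ with the spatially constant supersolution $\|u_0\|_\infty e^{\phi(t)}$. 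Your computations check out: $q+1-p=p^2/(n-p)>0$, $\lambda\in L^\infty(0,T)$ by (D1) so $\phi$ is Lipschitz and $\tau$ is bi-Lipschitz on $[0,T]$, and $\partial_\tau(\tilde u^q)\in L^2$ is inherited. The one citation I would adjust: \eqref{MP} in Theorem~\ref{our theorem} is stated as a property of the \emph{constructed} global solution of~\eqref{bpS}, whereas your transformed function is an a priori arbitrary weak solution of~\eqref{bpS+} on $\Omega\times(0,\tau(T))$; to conclude $\tilde u\le\|u_0\|_\infty$ you should instead invoke the comparison principle (Theorem~\ref{Comparison theorem}) with the constant function $\|u_0\|_\infty$, which is trivially a weak supersolution of~\eqref{bpS} and dominates $\tilde u$ on $\partial_p\Omega_{\tau(T)}$ (or the maximum principle of \cite[Propositions 3.4, 3.5]{Kuusi-Misawa-Nakamura}, which applies to arbitrary weak solutions). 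With that substitution, and the Appendix~\ref{Sec. rigorous argument}-type justification of the change of variables that you already flag, the proof is complete.
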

%

%Let the initial data $v_0$ be positive in $\Omega$. Then, the solution of~\eqref{bpS} is positive in $\Omega$ in some time interval. %
%This is called the \emph{expansion of positivity}.
\bigskip

In \cite{Kuusi-Misawa-Nakamura} we proved \emph{the expansion of positivity} of a solution of the doubly nonlinear equation such as~\eqref{pS} and~\eqref{bpS}. In particular, the convexity of domain is not needed by virtue of the so-called Harnack chain argument. See \cite[Theorem 4.7, Corollary 4.8]{Kuusi-Misawa-Nakamura} for detail and its proof.  We are going to deduce the refined assertion of them.
\medskip

Before stating, we set the notation as below. Let $\Omega^{\prime}$ be a subdomain contained compactly in $\Omega$. Let $\rho$ be any positive number satisfying $\rho \leq \frac{1}{16}\mathrm{dist} (\Omega^{\prime}, \partial \Omega)$. 
\begin{figure}
\begin{center}
\begin{tikzpicture}[scale=0.50]
            \draw  plot[draw=black, smooth, tension=.8] coordinates {(-4.0,0.55) (-3.3,2.8) (-1.2,4.0) (1.8,3.6) (4.4,3.8) (5.2,2.8) (5.3,0.6) (2.8,-2.5) (0,-1.0) (-3.3,-1.5) (-4.0,0.6)};
            \filldraw[pattern=north east lines, pattern color=blue, thick]  plot[smooth, tension=.8] coordinates {(-3.5,0.5) (-3,2.5) (-1,3.5) (1.5,3) (4,3.5) (5,2.5) (5,0.5) (2.5,-2) (0,-0.5) (-3,-1.0) (-3.5,0.5)};
            \draw[<->] (0,3.9)--(0,3.4);
    \draw (0.05,3.42) node[right]{\footnotesize $16\rho$};
            \draw plot[smooth, tension=.7] coordinates {(-3.5,0.5) (-3,2.5) (-1,3.5) (1.5,3) (4,3.5) (5,2.5) (5,0.5) (2.5,-2) (0,-0.5) (-3,-1.0) (-3.5,0.5)};
           \draw (1,1.5)node[below]{\small $\Omega'$};
           \draw (5.7,1.5)node[right]{\small $\Omega$};
          \end{tikzpicture}
 \end{center}
 \caption{Domain and subdomain}
 \end{figure}
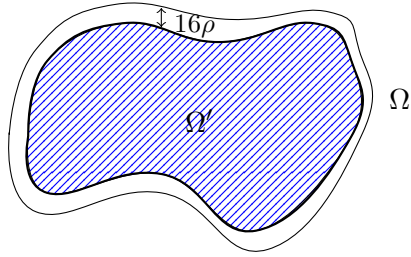

Now, we state the refinement of expansion of positivity with a waiting time (cf. \cite[Theorem 4.9]{Kuusi-Misawa-Nakamura}).  

\begin{thm}[Expansion of positivity with a waiting time]\label{ex. thm prime}
Let $u$ be a nonnegative weak solution of~\eqref{pS} in $\Omega_T$. Let $\Omega^\prime$ be a subdomain contained compactly in $\Omega$. Let $\rho$ be any positive number satisfying $\rho \leq \mathrm{dist} (\Omega^\prime, \partial \Omega)/16$.  Let $t_0 \in (0, T]$. Suppose that
\begin{equation}\label{ex. propassumption prime}
\big|\Omega^\prime \cap \{u(t_0) \geq L\} \big| \geq \alpha |\Omega^\prime|
\end{equation}
holds for some $L>0$ and $\alpha \in (0,1]$. Then there exist positive integer $N=N(\Omega^\prime, \rho)$, positive real number families $\delta_{0}, \delta_{N},\,\eta_{N}, \eta_{N+1}, \sigma_{N} \in (0,1),\,J_{N},\,I_{N}\in \mathbb{N}$ depending on $\alpha, N, n, p$ and independent of $L$, and a time $t_N>t_0$ such that
\begin{equation*}
u \geq \eta_{N+1} L
\end{equation*}
almost everywhere in
\begin{equation*} % \label{e.}
\Omega^{\prime}\times \left(t_{N}+ (1-\sigma_N) \delta_N (\eta_N L)^{q+1-p} \rho^p,\,t_{N}+\delta_N (\eta_N L)^{q+1-p} \rho^p \right), 
\end{equation*}
where $\sigma_N=e^{-(\tau_N + 2e^{\tau_N})}$, $e^{\tau_N} = C 2^{I_N+J_N}$
with $C=C(n,p)>0$, and $t_{N}$ is written as
\begin{equation*}
t_N=t_0+(\delta_0-\delta_N\eta_N^{q+1-p})L^{q+1-p}\rho^p
\end{equation*}
and thus, the terminal time of the time interval above is
\begin{equation*}
t_{0}+\delta_{0}L^{q+1-p}\rho^p.
\end{equation*}
\end{thm}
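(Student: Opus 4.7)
The plan is to deduce the refined waiting-time statement from the ball-based expansion of positivity for the $p$-Sobolev flow (Theorem 4.7 and Corollary 4.8 of~\cite{Kuusi-Misawa-Nakamura}) by a finite Harnack chain argument, and then to reshuffle the resulting time intervals through a stretching time transformation so as to pin the terminal time to the clean value $t_0+\delta_0 L^{q+1-p}\rho^p$ stated in the conclusion.

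First I would localize the measure-theoretic hypothesis on $\Omega'$ to a single ball. Since $\Omega'$ is compactly contained in $\Omega$ and $\rho\le\mathrm{dist}(\Omega',\partial\Omega)/16$, covering $\Omega'$ by finitely many balls of radius $\rho$ and invoking a pigeonhole argument produces at least one ball $B_\rho(x_\ast)$ with $x_\ast\in\Omega'$ satisfying a density condition of the form $|B_\rho(x_\ast)\cap\{u(t_0)\ge L\}|\ge\alpha'|B_\rho|$, for some $\alpha'=\alpha'(\alpha,\Omega',\rho)>0$. Applying the ball-based expansion of positivity on $B_\rho(x_\ast)$ yields $u\ge\eta_0 L$ almost everywhere on $B_{2\rho}(x_\ast)$ throughout an intrinsic time interval ending at $t_0+\delta_0 L^{q+1-p}\rho^p$, with universal constants $\eta_0,\delta_0\in(0,1)$ depending only on $\alpha',n,p$.

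Next I would propagate the pointwise positivity across $\Omega'$ by a Harnack chain. Choose a chain $B_\rho(x_0),B_\rho(x_1),\dots,B_\rho(x_N)$, with $N=N(\Omega',\rho)$, joining $x_\ast$ to an arbitrary point of $\Omega'$ and arranged so that $B_\rho(x_{i+1})\subset B_{2\rho}(x_i)$. Iterating the ball-based expansion from step to step produces decreasing sequences $\eta_0>\eta_1>\cdots>\eta_{N+1}$ (with $\eta_{i+1}=\bar\eta\,\eta_i$ for a constant $\bar\eta=\bar\eta(n,p)\in(0,1)$) together with accompanying time windows whose lengths scale intrinsically like $(\eta_i L)^{q+1-p}\rho^p$. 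After the final step this delivers pointwise positivity $u\ge\eta_{N+1}L$ on all of $\Omega'$ over some time interval contained in $[t_0,t_0+\delta_0 L^{q+1-p}\rho^p]$.

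The delicate point, which I expect to be the main obstacle, is the precise placement of the endpoints of this interval. A naive concatenation would produce a terminal time depending on the chain length $N$ through compound factors $\eta_i^{q+1-p}$, not the clean value claimed. To realign, I would apply a stretching time transformation analogous to the inverse of the one in Proposition~\ref{Nonlinear intrinsic scaling}: passing to a new time coordinate that absorbs the Lagrange multiplier $\lambda(t)=\|\nabla u(t)\|_p^p$ reduces the $p$-Sobolev flow locally to the prototype equation~\eqref{bpS+}, for which each ball expansion has a clean intrinsic time window; after transforming back, the windows rearrange precisely into
\[
\bigl(t_N+(1-\sigma_N)\delta_N(\eta_N L)^{q+1-p}\rho^p,\ t_N+\delta_N(\eta_N L)^{q+1-p}\rho^p\bigr)
\]
with the prescribed terminal time $t_0+\delta_0 L^{q+1-p}\rho^p$ and with $t_N=t_0+(\delta_0-\delta_N\eta_N^{q+1-p})L^{q+1-p}\rho^p$. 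The factor $\sigma_N=e^{-(\tau_N+2e^{\tau_N})}$ arises from the exponential distortion of the time variable under this change of coordinates, while $e^{\tau_N}=C2^{I_N+J_N}$ records the accumulated contribution of the De Giorgi level indices $I_N,J_N$ that are required at each reduction step to pass from measure-theoretic to pointwise positivity. The technical work consists in checking that this bookkeeping is consistent through all $N$ iterations and that the dependencies on $\alpha,N,n,p$ stay uniform in $L$ as claimed.
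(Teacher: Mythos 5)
Your skeleton --- pigeonhole to localize the measure hypothesis in one ball, then a Harnack chain of $N=N(\Omega',\rho)$ overlapping balls producing a decreasing sequence $\eta_1>\dots>\eta_{N+1}$ --- is exactly the paper's architecture, and you correctly identify the real difficulty as the placement of the time windows. But the device you propose to resolve it does not work and is not what is needed. You suggest inverting the nonlinear intrinsic scaling of Proposition~\ref{Nonlinear intrinsic scaling} to ``absorb'' $\lambda(t)$ and reduce to the prototype equation~\eqref{bpS+}. That transformation is not available here: the theorem is stated for an arbitrary nonnegative weak solution of~\eqref{pS}, not one known to arise from a solution of~\eqref{bpS+}; the scaling is global in space and divides $u$ by $\gamma(t)$, which would distort the level $L$ in the hypothesis~\eqref{ex. propassumption prime}; and, most importantly, no reduction is needed at all, since $\lambda(t)u^q\geq 0$ makes $u$ a weak supersolution of~\eqref{pST} with $c=0$, to which the whole positivity machinery applies directly. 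Even granting such a reduction, a single change of time coordinate cannot realign $N$ compounded windows whose lengths involve the factors $\eta_m^{q+1-p}$; that alignment is not a coordinate change but a choice of starting times.

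The paper's actual mechanism has two separate ingredients. First, a single-ball result (Theorem~\ref{Expansion of positivity until end time}) upgrading Proposition~\ref{ex. prop} so that the pointwise bound $u\geq\eta L$ holds on the \emph{terminal} portion $\bigl(t_0+(1-\sigma)\delta L^{q+1-p}\rho^p,\,t_0+\delta L^{q+1-p}\rho^p\bigr)$ of the intrinsic window; this is where the exponential stretching $-e^{-\tau}=\frac{t-\delta L^{q+1-p}\rho^p}{\delta L^{q+1-p}\rho^p}$ enters (it is DiBenedetto's stretching of the intrinsic window, not an inverse of the intrinsic scaling), and it is the source of $\sigma=e^{-(\tau_0+2e^{\tau_0})}$ with $e^{\tau_0}=C2^{I+J}$. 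Second, in the chain one chooses each new starting time as
\begin{equation*}
t_m:=t_0+\bigl(\delta_0-\delta_m\eta_m^{q+1-p}\bigr)L^{q+1-p}\rho^p,
\end{equation*}
which forces every window $\mathcal{I}_m$ to have the same right endpoint $t_0+\delta_0L^{q+1-p}\rho^p$; one must then verify $t_m\in\mathcal{I}_{m-1}$, which holds because $\delta_m$ may be taken small relative to $\sigma_{m-1}$. Since the $\eta_m$ decrease and all windows share the terminal time, intersecting over the chain gives $u\geq\eta_{N+1}L$ on $\Omega'\times\mathcal{I}_N$. Without the end-time version of the single-ball expansion and the explicit nested choice of the $t_m$, your argument only yields positivity on some interval strictly inside $[t_0,\,t_0+\delta_0L^{q+1-p}\rho^p]$ with endpoints depending on the compounded factors, which is weaker than the stated conclusion.
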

\begin{proof}
The proof of this theorem is postponed, and will be given in Appendix~\ref{two proofs}.

\end{proof}
We also state the refined expansion of positivity without a waiting time (cf. \cite[Corollary 4.10]{Kuusi-Misawa-Nakamura}).
\begin{prop}[Expansion of positivity without a waiting time]\label{ex. cor prime}
Let $u$ be a nonnegative weak solution of~\eqref{pS} in $\Omega_T$. Let $\Omega^\prime$ be a subdomain contained compactly in $\Omega$. Let $\rho$ be any positive number satisfying $\rho \leq \mathrm{dist} (\Omega^\prime, \partial \Omega)/16$. Suppose that $u(t_0)>0$ in $\Omega$ for some  $t_0 \in [0,T)$. Then there exist positive numbers $\eta_0$ and  $\tau_0$ such that
\begin{equation*}
u \geq \eta_0 \quad \textrm{a.e.}\quad  \textrm{in}\quad  \Omega^\prime \times (t_0, t_0+\tau_0).
\end{equation*}
%
%where $\eta_{0}$ and $\tau_{0}$ is precisely chosen in the proof.
%
\end{prop}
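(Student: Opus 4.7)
The plan is to derive Proposition~\ref{ex. cor prime} as a direct corollary of Theorem~\ref{ex. thm prime} (the refined expansion of positivity with a waiting time), by verifying its measure-theoretic hypothesis at $t_0$ and then exploiting the freedom in the parameter $\rho$ to sweep out the full time interval $(t_0, t_0+\tau_0)$.

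First, I turn the pointwise positivity $u(t_0)>0$ in $\Omega$ into the measure-theoretic condition~\eqref{ex. propassumption prime}. Since the decreasing sets $\{u(t_0) < 1/k\}\cap \Omega'$ have empty intersection modulo null sets, continuity of measure from above yields $|\{u(t_0) < 1/k\} \cap \Omega'| \to 0$ as $k\to\infty$. Consequently, fixing any $\alpha \in (0,1)$ (say $\alpha = 1/2$), there exists $L>0$ with $|\{u(t_0) \ge L\} \cap \Omega'| \ge \alpha |\Omega'|$, which is exactly the hypothesis of Theorem~\ref{ex. thm prime}.

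I then invoke Theorem~\ref{ex. thm prime} with this $L$, this $\alpha$, and a parameter $\rho\in(0,\rho_*]$, where $\rho_* := \mathrm{dist}(\Omega',\partial\Omega)/16$. For each admissible $\rho$, the theorem delivers a positivity lower bound $\eta_{N+1} L$ on $\Omega'$ valid throughout the time interval
\[
J_\rho := \bigl(\, t_0 + (\delta_0 - \sigma_N \delta_N \eta_N^{q+1-p})\,L^{q+1-p}\rho^p,\ \ t_0 + \delta_0 L^{q+1-p}\rho^p\,\bigr).
\]
Writing this as $J_\rho=(t_0+c_1\rho^p, t_0+c_2\rho^p)$ with fixed $0<c_1<c_2$, an elementary check shows that as $\rho$ ranges over $(0,\rho_*]$ one has $\bigcup_\rho J_\rho = (t_0, t_0+c_2\rho_*^p)$; hence setting $\tau_0 := \delta_0 L^{q+1-p}\rho_*^p$, every instant in $(t_0,t_0+\tau_0)$ lies in some $J_\rho$ and is thus reached by a suitable application of Theorem~\ref{ex. thm prime}.

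The hard part will be extracting a single positive constant $\eta_0$ uniformly over $(t_0,t_0+\tau_0)$, since the integer $N=N(\Omega',\rho)$ may blow up as $\rho \searrow 0$ and consequently $\eta_{N+1} L$ could degenerate. To handle this, I plan to restrict $\rho$ to a compact sub-range $[\rho_1,\rho_*]$, where $N$ is bounded and hence the constants $\eta_N,\eta_{N+1},\delta_N,\sigma_N$ are uniform; this produces uniform positivity $u\ge \eta_0$ on a sub-interval of the form $(t_0+\varepsilon,t_0+\tau_0)$. Then, since at any time $t_1\in(t_0+\varepsilon,t_0+\tau_0)$ one has $u(t_1)\ge \eta_0$ a.e.\ on $\Omega'$, so that the measure-theoretic hypothesis holds with $\alpha=1$, I iterate Theorem~\ref{ex. thm prime} started at suitably chosen earlier slices to chain the uniform positivity backward toward $t_0$, at the cost of possibly shrinking both $\eta_0$ and $\tau_0$.
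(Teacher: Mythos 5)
Your first two steps are sound (passing from $u(t_0)>0$ to the measure condition~\eqref{ex. propassumption prime}, and identifying the positivity window $J_\rho$ produced by Theorem~\ref{ex. thm prime}), but the strategy cannot close, because Theorem~\ref{ex. thm prime} intrinsically carries a waiting time and nothing in your argument removes it near $t_0$. Two concrete problems. First, the covering claim is not as stated: in $J_\rho=(t_0+c_1\rho^p,\,t_0+c_2\rho^p)$ only $c_2=\delta_0L^{q+1-p}$ is independent of $\rho$, while $c_1=(\delta_0-\sigma_N\delta_N\eta_N^{q+1-p})L^{q+1-p}$ depends on $\rho$ through $N(\Omega^\prime,\rho)$ and approaches $c_2$ as $\rho\searrow0$ (the windows shrink relative to $\rho^p$), so $\bigcup_\rho J_\rho$ need not be the full interval; you partially acknowledge this by restricting to $[\rho_1,\rho_*]$, which is fine but only yields positivity on $(t_0+\varepsilon,\,t_0+\tau_0)$ with $\varepsilon>0$. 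Second, and decisively, the proposed repair --- ``chain the uniform positivity backward toward $t_0$'' --- is not available: the expansion of positivity is a forward-in-time mechanism, and every application of Theorem~\ref{ex. thm prime}, no matter where it is started and with what $\alpha$ (even $\alpha=1$), produces a lower bound only after a strictly positive waiting time. On $(t_0,t_0+\varepsilon)$ there is no quantitative slice to start from except $t_0$ itself, and pushing the waiting time to $0$ by shrinking $\rho$ or $L$ simultaneously degenerates the constant $\eta_{N+1}L$, so no uniform $\eta_0$ on $(t_0,t_0+\varepsilon]$ comes out.

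The missing ingredient is the separate no-waiting-time result on a single ball, Proposition~\ref{key cor} (i.e.\ \cite[Corollary 4.6]{Kuusi-Misawa-Nakamura}): when $u(t_0)>0$ throughout $B_{4\rho}(x_0)$ one takes $L=\inf_{B_{4\rho}(x_0)}u(t_0)$, so the hypothesis of Proposition~\ref{ex. prop} holds with $\alpha=1$; Proposition~\ref{ex. prop} then gives the measure estimate $|B_\rho\cap\{u(t)\ge\varepsilon L\}|\ge\tfrac12|B_\rho|$ for \emph{all} $t\in[t_0,\,t_0+\delta L^{q+1-p}\rho^p]$, including times immediately after $t_0$, and a De~Giorgi iteration on cylinders anchored at $t_0$ yields $u\ge\eta_0$ on $B_{2\rho}(x_0)\times(t_0,t_0+\tau_0)$ with no waiting time. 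The paper's proof simply covers $\overline{\Omega^\prime}$ by finitely many balls $B_\rho(x_j)$ with $B_{16\rho}(x_j)\subset\Omega$, applies Proposition~\ref{key cor} on each with $L_j=\inf_{4B_j}u(t_0)$, and takes the minimum of the finitely many resulting constants, using $L:=\inf_{\Omega^{\prime\prime}}u(t_0)>0$ as a common lower bound. You should base the argument on Proposition~\ref{key cor} rather than on Theorem~\ref{ex. thm prime}; as written, your proof does not reach the times in $(t_0,t_0+\varepsilon]$.
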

We also give the proof of this proposition in Appendix~\ref{two proofs}.
\medskip

Applying Theorem~\ref{ex. thm prime} and Proposition~\ref{ex. cor prime} with $t_0 = 0$, we have another refinement of the interior positivity by the volume constraint (cf. \cite[Proposition 5.4]{Kuusi-Misawa-Nakamura}). 
\begin{prop}[Interior positivity by the volume constraint]\label{Interior positivity by the volume constraint}
Let the initial data $u_0 \in W^{1,p}_{0}(\Omega)$ be positive, bounded in $\Omega$ and satisfy $\|u_{0}\|_{q+1}=1$. Let $u$ be a nonnegative weak solution of~\eqref{pS}  in $\Omega_T$ with $T>0$. Put $M:=e^{\frac{1}{q}\int_0^T\|\nabla u(t)\|_{p}^{p}\,dt} \|u_{0}\|_{\infty}$ and let $\Omega^{\prime}$ be a subdomain compactly contained in $\Omega$ satisfying $|\Omega \setminus \Omega^{\prime}| \leq \frac{1}{4M^{q+1}}$. Then there exists a positive constant $\eta$ such that %
\[
u(x,t)\geq \eta L \quad \textrm{in}\quad \Omega^{\prime} \times [0,T].
\]
Here $0<L\leq \min \left\{ \left(\frac{1}{4|\Omega^{\prime}|}\right)^{\frac{1}{q+1}}, \inf \limits_{\Omega^{\prime\prime}} u_{0} \right\}$, 
  where $\Omega^{\prime\prime}$ is compactly contained in $\Omega$ and compactly containing $\Omega^\prime$, and  the positive constant $\eta$ depends only on $p, n, \Omega^{\prime}, M$ and $N$, where $N$ is the number of chain balls of $\Omega^{\prime}$. The constant $\eta$ is given as a non-increasing positive function in $T$ and $\|u_{0}\|_{\infty}$.
\end{prop}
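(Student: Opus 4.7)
The plan is to combine the upper bound $u\le M$ from Proposition~\ref{Boundedness of pS} with the volume constraint $\|u(t)\|_{q+1}=1$ to derive, for every $t\in[0,T]$, a uniform lower bound on the density of the super-level set $\Omega'\cap\{u(t)\ge L\}$. With this density bound in hand, Theorem~\ref{ex. thm prime} (refined expansion of positivity) will be applied at each base time to promote it to a pointwise lower bound on $\Omega'$, and the resulting positivity intervals will be chained together to cover $[0,T]$, with the initial segment handled by Proposition~\ref{ex. cor prime}.

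\textbf{Measure estimate.} Fix $t\in[0,T]$. I would split $\int_\Omega u^{q+1}(t)\,dx=1$ over $\Omega'$ and $\Omega\setminus\Omega'$: using $u(t)\le M$ and $|\Omega\setminus\Omega'|\le 1/(4M^{q+1})$, the exterior part is at most $1/4$, so $\int_{\Omega'}u^{q+1}(t)\,dx\ge 3/4$. Splitting again across $\{u(t)\ge L\}$ versus $\{u(t)<L\}$ and using $L^{q+1}|\Omega'|\le 1/4$, the sub-level contribution is at most $1/4$, yielding $\int_{\Omega'\cap\{u(t)\ge L\}}u^{q+1}(t)\,dx\ge 1/2$. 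Bounding the integrand by $M^{q+1}$ produces
\begin{equation*}
|\Omega'\cap\{u(t)\ge L\}|\ge \frac{1}{2M^{q+1}}=\alpha|\Omega'|,\qquad \alpha:=\frac{1}{2M^{q+1}|\Omega'|},
\end{equation*}
which is the uniform density estimate needed as input to Theorem~\ref{ex. thm prime}.

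\textbf{Expansion and chaining.} Fix $\rho=\mathrm{dist}(\Omega',\partial\Omega)/16$. At each admissible base time $t_0\in[0,T-\delta_0 L^{q+1-p}\rho^p]$, the measure estimate is exactly the hypothesis of Theorem~\ref{ex. thm prime}, so I would obtain $u\ge\eta_{N+1}L$ on $\Omega'\times J_{t_0}$, where $J_{t_0}$ is an interval of length $\sigma_N\delta_N(\eta_N L)^{q+1-p}\rho^p$ terminating at $t_0+\delta_0 L^{q+1-p}\rho^p$. Crucially, the constants $\delta_0,\delta_N,\eta_N,\sigma_N,\eta_{N+1}$ depend only on $n,p,\alpha,N$, hence only on $n,p,\Omega',M,N$, and are uniform in $t_0$. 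Varying $t_0$ continuously, the intervals $J_{t_0}$ overlap and cover a terminal block of $[0,T]$. For the initial segment not reached in this way, I would use $L\le\inf_{\Omega''}u_0$ together with $\Omega'\Subset\Omega''$ to conclude $u_0\ge L$ on $\Omega'$---giving full density $\alpha=1$ at $t_0=0$ so that Theorem~\ref{ex. thm prime} applies there as well---and simultaneously invoke Proposition~\ref{ex. cor prime} at $t_0=0$ (valid since $u_0>0$ in $\Omega$) to obtain $u\ge\eta_0$ on $\Omega'\times(0,\tau_0)$. One or the other of these two covers the remaining initial gap. Setting $\eta:=\min\{\eta_{N+1},\eta_0/L\}$ yields the claimed $u\ge\eta L$ on $\Omega'\times[0,T]$; monotonicity of $\eta$ in $T$ and $\|u_0\|_\infty$ is inherited from the monotone dependence of $M$ on these quantities.

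\textbf{Main obstacle.} The principal technical difficulty will be the time-chaining bookkeeping: verifying that the positivity-slab length $\sigma_N\delta_N\eta_N^{q+1-p}L^{q+1-p}\rho^p$ provided by Theorem~\ref{ex. thm prime} is large enough, relative to the forward advance $\delta_0 L^{q+1-p}\rho^p$, for two consecutive applications to produce overlapping intervals, and that the initial gap is genuinely tiled by the combined Theorem~\ref{ex. thm prime} at $t_0=0$ (with $\alpha=1$) and Proposition~\ref{ex. cor prime}. If a single iteration is insufficient, a finite number of further applications---each controlled by constants that depend only on $n,p,\Omega',M,N$---produces the uniform tiling, and tracking these constants carefully gives the asserted dependence $\eta=\eta(n,p,\Omega',M,N)$.
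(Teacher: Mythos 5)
Your proposal is correct and follows essentially the same route as the paper: the identical $\tfrac14+\tfrac14$ splitting of the volume constraint against the bound $u\le M$ to get the density $\alpha=\tfrac{1}{2M^{q+1}|\Omega'|}$ of $\Omega'\cap\{u(t)\ge L\}$, followed by Theorem~\ref{ex. thm prime} applied at a sliding base time so that the terminal times $t_0+\delta_0L^{q+1-p}\rho^p$ sweep out $[\,\delta_0L^{q+1-p}\rho^p,T]$, with Proposition~\ref{ex. cor prime} at $t_0=0$ covering the initial segment and $\eta$ taken as the minimum of the two resulting constants. The chaining bookkeeping you flag as the main obstacle is handled in the paper exactly as you describe (shrinking $\delta_0$ near $t=T$ and letting $t_0$ range over $[0,T]$), so no further idea is missing.
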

We present the proof of this proposition in Appendix~\ref{another proof}.
\bigskip

Finally, we give the positivity near the boundary for the solutions of $p$-Sobolev flow~\eqref{pS}.  See \cite[Propositions 5.5, 4.9]{Kuusi-Misawa-Nakamura} for details.

\begin{prop}[Positivity around the boundary]\label{positivity around the boundary}
Suppose that $u_0>0$ in $\Omega$. Let $u$ be a nonnegative weak solution $u$ to~\eqref{pS} in $\Omega_T$. Then $u$ is positive in $\Omega_T$ near the boundary.
\end{prop}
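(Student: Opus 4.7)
My plan is to combine the already--established interior positivity of Proposition~\ref{Interior positivity by the volume constraint} with a finite chain of expansion--of--positivity steps reaching into any prescribed inner neighborhood of $\partial\Omega$. The relevant tools apply because Proposition~\ref{lambda equality} gives $\lambda(t) = \|\nabla u(t)\|_p^p \geq 0$, so $u$ is a weak supersolution of the prototype equation~\eqref{bpS+} $\partial_s (w^q) - \Delta_p w = 0$. Consequently, both the Alt--Luckhaus comparison theorem (Theorem~\ref{Comparison theorem}) and the refined expansion of positivity (Theorem~\ref{ex. thm prime} and Proposition~\ref{ex. cor prime}) apply to $u$ itself.

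Fix a terminal time $T \in (0, \infty)$. First I would choose an inner subdomain $\Omega^\prime \Subset \Omega$ satisfying the smallness condition $|\Omega \setminus \Omega^\prime| \leq 1/(4 M^{q+1})$ required by Proposition~\ref{Interior positivity by the volume constraint}, obtaining $u \geq \eta L > 0$ on $\Omega^\prime \times [0, T]$. Next I would fix any one--sided inner neighborhood $\mathcal{N}$ of $\partial\Omega$ in $\Omega$, and for each $x_\ast \in \overline{\mathcal{N}}$ pick a radius $\rho$ with $\rho \leq \mathrm{dist}(x_\ast, \partial\Omega)/16$ together with a finite chain of balls $B_\rho(y_1), \ldots, B_\rho(y_K)$ such that $y_1 \in \Omega^\prime$, $y_K = x_\ast$, $|y_{j+1} - y_j| \leq \rho$, and $B_{16\rho}(y_j) \subset \Omega$ for each $j$ (such a chain exists by compactness of $\overline{\mathcal{N}}$ and the fact that $\mathcal{N}$ is a sufficiently thin strip). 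Applying Theorem~\ref{ex. thm prime} link by link along the chain, starting from the bound $u \geq \eta L$ on $B_\rho(y_1) \subset \Omega^\prime$, I would propagate a quantitative lower bound step by step, arriving after $K$ applications at some lower bound $u \geq \eta_\ast L$ on $B_\rho(x_\ast)$ valid on a short time window of length $\tau = \tau(\rho, L, K, n, p) > 0$.

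To upgrade this to the full interval $[0, T]$ I would iterate the previous chain argument over consecutive time windows of length $\tau$: at every iteration, the interior positivity $u \geq \eta L$ on $\Omega^\prime$ persists on the whole $[0, T]$ by Step~1, so the starting link of the chain is always in place, and finitely many applications cover $[0, T]$. Varying $x_\ast$ over $\overline{\mathcal{N}}$ and using compactness of $\partial\Omega$ to reduce to finitely many configurations of chains, I conclude $u > 0$ on $\mathcal{N} \times (0, T)$, i.e., $u$ is positive near the boundary within $\Omega_T$, which is the desired conclusion.

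The main obstacle is monitoring the multiplicative degradation of the positivity constants $\eta_{N+1}$ along the chain, together with the dependence of the elementary time step $\tau = \delta_N (\eta_N L)^{q+1-p} \rho^p$ on $\rho$ and $L$, both of which shrink as the target point approaches $\partial\Omega$. For any fixed $x_\ast \in \overline{\mathcal{N}}$ with $\mathrm{dist}(x_\ast, \partial\Omega) > 0$, however, $K$ is finite and $\tau$ is strictly positive, so the bookkeeping remains finite at each such point and the argument delivers a strict, quantitative positive lower bound for $u$ on a full neighborhood of that point up to time $T$; as an alternative one could replace the chain by a local Talenti--type subsolution $W(x,s) = (-\mu)^{-1/(q+1-p)} Y_{\lambda, x_0}(x) Z(s)$ from the proof of Proposition~\ref{Finite time extinction of solutions}, placed in a ball $B_{2r}(x_0) \Subset \Omega$ whose boundary lies in $\Omega^\prime$, and compare via Theorem~\ref{Comparison theorem}, selecting $Z(0)$ small and $|\mu|$ small so that the extinction time of $Z$ exceeds $T$.
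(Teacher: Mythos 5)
The paper offers no proof of this proposition; it simply defers to \cite[Propositions 5.5, 4.9]{Kuusi-Misawa-Nakamura}, and the tools invoked there (the expansion of positivity with a waiting time and its no-waiting-time variant, seeded by the interior positivity coming from the volume constraint) indicate that the intended argument is essentially the propagation scheme of your primary route: start from $u\geq \eta L$ on $\Omega'\times[0,T]$ and push positivity outward into the boundary strip by finitely many expansion-of-positivity steps. So your main plan is the right one. Two steps in your write-up, however, would fail as stated. The reduction ``using compactness of $\partial\Omega$ to reduce to finitely many configurations of chains'' is impossible: every ball entering Theorem~\ref{ex. thm prime} must satisfy $B_{16\rho}(y_j)\subset\Omega$, so $\rho\leq \mathrm{dist}(y_j,\partial\Omega)/16\to 0$ as the chain approaches $\partial\Omega$, the number of links $K$ blows up, and the resulting lower bound $\eta_\ast L$ degenerates; no finite family of admissible chains covers a one-sided neighborhood of $\partial\Omega$. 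What the argument actually delivers is pointwise (locally uniform) positivity with constants depending on $\mathrm{dist}(x_\ast,\partial\Omega)$ --- which is all the proposition asserts and all that Step 2 of the proof of Theorem~\ref{mainthm} uses --- but it cannot be packaged as a uniform bound on the strip. Relatedly, each application of Theorem~\ref{ex. thm prime} carries a waiting time, so the chain yields positivity at $x_\ast$ only for $t\geq t_{\mathrm{wait}}>0$; the initial layer $(0,t_{\mathrm{wait}})$ must be covered by Proposition~\ref{ex. cor prime} with $t_0=0$ on a neighborhood of $x_\ast$, using $u_0>0$ there. You list this tool in your toolbox but never deploy it in the execution.

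Your alternative barrier route is mis-configured. A Talenti profile $Y_{\lambda,x_0}$ is strictly positive on $\partial B_{2r}(x_0)$, and you require the comparison ball to be compactly contained in $\Omega$ with its boundary inside $\Omega'$; such a comparison therefore only reproduces positivity on sets already handled by Proposition~\ref{Interior positivity by the volume constraint} and never reaches the boundary strip. A genuine boundary barrier would have to be run on the strip $\Omega\setminus\overline{\Omega'}$, whose parabolic boundary contains $\partial\Omega\times[0,T]$ where $u=0$; the comparison subsolution must then itself vanish (or be nonpositive) on $\partial\Omega$, which a strictly positive separable Talenti function cannot do.
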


Under the above preliminaries, we now prove Theorem~\ref{mainthm}.

\begin{proof}[\normalfont \textbf{Proof of Theorem~\ref{mainthm}}]
We divide the proof  in three steps.
\bigskip

\textbf{Step 1:}
\quad We choose the initial data $v_0$ as $u_0$, that of the $p$-Sobolev flow~\eqref{pS} and solve the prototype equation~\eqref{bpS} with the initial data $v_0 = u_0$. Let $v$ be a nonnegative weak solution of~\eqref{bpS} in $\Omega_{\infty}$ with the initial data $v_0 = u_0$. Let $S^{\ast}$ be the extinction time of $v$. 
%%%%%
%
\bigskip

\textbf{Step 2:}
\quad Here we will verify the positivity for the $p$-Sobolev flow~\eqref{pS}. From Propositions~\ref{Interior positivity c0 by the volume constraint} and~\ref{positivity around the boundary}, for any positive $T <\infty$
\begin{equation}\label{note1}
u >0 \quad \mathrm{in}\quad \Omega\times [0,T].
\end{equation}

From the global existence result for the prototype doubly nonlinear equation~\eqref{bpS} in Theorem~\ref{our theorem} and the nonlinear intrinsic scaling transformation in Proposition~\ref{Nonlinear intrinsic scaling},  we plainly get the global existence for the $p$-Sobolev  flow~\eqref{pS}.
\bigskip

\textbf{Step 3:}
\quad Finally, we will show the local H\"{o}lder regularity for the $p$-Sobolev flow~\eqref{pS}.
\medskip

Following \cite[Section 5.2]{Kuusi-Misawa-Nakamura}, we recall the result of H\"older and gradient H\"{o}lder continuity of the solution to $p$-Sobolev flow~\eqref{pS} with respect to space-time variable.
\medskip

Suppose the initial value $u_0 >0$ in $\Omega$. Then by Propositions~\ref{Interior positivity by the volume constraint} and~\ref{Boundedness of pS}, for any $\Omega^\prime$ compactly contained in $\Omega$ and $T \in (0,\infty)$,
we can take a positive constant $\tilde{c}$ such that
\begin{equation}\label{bounded from above and below1}
0<\tilde{c} \leq u \leq M=:e^{\frac{1}{q}\int_0^T\|\nabla u(t)\|_p^p\,dt}\|u_{0}\|_{\infty} \quad \textrm{in}\quad  \Omega^\prime \times [0, T].
\end{equation}
As discussed in \cite[Section 5.2]{Kuusi-Misawa-Nakamura}, by~\eqref{bounded from above and below1}, we can rewrite the first equation of~\eqref{pS} as follows : Set $v:=u^q$, which is equivalent to $u=v^\frac{1}{q}$ and put $g:=\frac{1}{q} v^{1/q-1}$ and then, it is easily seen that the first equation of~\eqref{pS} is equivalent to
\begin{equation}\label{eq. of v}
\partial_tv-\mathrm{div} \big(|\nabla v|^{p-2}g^{p-1}\nabla v \big)=\lambda(t)v \quad \textrm{in}\quad \Omega^\prime \times [0,T]
\end{equation}
and thus, $v$ is a positive and bounded weak solution of the evolutionary $p$-Laplacian equation~\eqref{eq. of v}. By \eqref{bounded from above and below1} $g$ is uniformly elliptic and bounded in $\Omega'_T$. 
\medskip

The following H\"{o}lder continuity is proved via the local energy inequality for a local weak solution $v$ to~\eqref{eq. of v} (\cite[Lemma C.1]{Kuusi-Misawa-Nakamura}) and standard iterative real analysis methods. See also\cite[Chapter III]{DiBenedetto1} or \cite[Section 4.4, pp.44--47]{Urbano} for more details.

\begin{thm}[H\"{o}lder continuity {\cite[Theorem 5.6]{Kuusi-Misawa-Nakamura}}]\label{Holder continuity}
Let $v$ be a positive and bounded weak solution to (\ref{eq. of v}). Then $v$ is locally H\"{o}lder continuous in $\Omega^\prime_T$ with a H\"older exponent $\beta \in (0, 1)$ on a parabolic metric $|x|+|t|^{1/p}$.
\end{thm}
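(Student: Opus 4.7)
The plan is to view \eqref{eq. of v} as an evolutionary $p$-Laplacian equation with bounded measurable coefficients plus a bounded lower-order term, and then apply DiBenedetto's intrinsic scaling machinery (\cite[Chapter III]{DiBenedetto1}, \cite[Section 4.4]{Urbano}). The two-sided bound \eqref{bounded from above and below1} forces $g = \tfrac{1}{q} v^{1/q-1}$ to be pinched between positive constants $g_* \le g \le g^*$ depending only on $\tilde c$, $M$ and $q$; hence the principal part is uniformly $p$-elliptic with bounded measurable coefficient. Moreover, $\lambda(t) = \|\nabla u(t)\|_p^p$ lies in $L^\infty_{\mathrm{loc}}(0,\infty)$ by Proposition~\ref{lambda equality} together with \eqref{energy ineq2 of v}, so the forcing $\lambda(t) v$ is locally bounded.

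First I would record Caccioppoli-type local energy inequalities for the truncations $(v-k)_\pm$ via the energy estimate in \cite[Lemma C.1]{Kuusi-Misawa-Nakamura}, tested against spatial cutoffs on intrinsic cylinders $Q_\rho(\theta) := B_\rho(x_0) \times (t_0 - \theta \rho^p, t_0)$. The time scale is chosen intrinsically, $\theta \sim (\mathrm{osc}_{Q_\rho} v)^{2-p}$, so that the rescaled equation is quantitatively comparable to a nondegenerate parabolic problem. Within these cylinders one then runs DiBenedetto's alternative: either, on some time slice, the upper level set $\{v > \sup_{Q_\rho}v - \omega/2\}$ occupies only a small fraction of the slice, in which case a De Giorgi iteration forces a reduction of the supremum on a smaller intrinsic cylinder; or the complementary measure bound triggers an expansion-of-positivity argument (of the same flavor as Theorem~\ref{ex. thm prime}) forcing a reduction of the infimum. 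Either way one obtains $\mathrm{osc}_{Q_{\rho/2}(\theta')} v \le \eta\,\mathrm{osc}_{Q_\rho(\theta)} v$ for a universal $\eta \in (0,1)$ depending only on $n$, $p$, $g_*$, $g^*$ and $M$.

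Iterating this decay on a geometrically shrinking sequence of intrinsic cylinders yields $\mathrm{osc}_{Q_\rho} v \lesssim \rho^\beta$ for some $\beta \in (0,1)$; a standard comparison between intrinsic and ordinary parabolic cylinders (possibly shrinking $\beta$) converts this into Hölder continuity with respect to the metric $|x|+|t|^{1/p}$. The main obstacle will be the intrinsic scaling bookkeeping: because $\theta$ depends on the local oscillation, one must verify inductively that the successive cylinders remain nested inside $\Omega'_T$ and that the constants do not deteriorate along the iteration. The lower-order term $\lambda(t) v$ contributes only a bounded perturbation in the Caccioppoli inequality which, thanks to its $L^\infty_{\mathrm{loc}}$ bound and the boundedness of $v$, is absorbed in each iteration step exactly as in the classical treatment.
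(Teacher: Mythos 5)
Your proposal is correct and follows essentially the same route as the paper: the paper also reduces \eqref{eq. of v} to a uniformly elliptic, bounded-coefficient evolutionary $p$-Laplacian equation via the two-sided bound \eqref{bounded from above and below1} and then invokes the local energy (Caccioppoli) inequality of \cite[Lemma C.1]{Kuusi-Misawa-Nakamura} together with DiBenedetto's intrinsic-scaling oscillation-reduction iteration as in \cite[Chapter III]{DiBenedetto1} and \cite[Section 4.4]{Urbano}. The details you flag (intrinsic time scale $\theta\sim\omega^{2-p}$, the two alternatives, absorption of the bounded lower-order term $\lambda(t)v$) are exactly the ingredients the cited references supply, so there is nothing further to add.
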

By a positivity and boundedness as in (\ref{bounded from above and below1}) and a H\"older continuity in Theorem~\ref{Holder continuity}, we see that the coefficient $g^{p-1}$ is H\"older continuous and thus, obtain a H\"older continuity of its spatial gradient.

\begin{thm}[Gradient H\"{o}lder continuity {\cite[Theorem 5.7]{Kuusi-Misawa-Nakamura}}]\label{Gradient Holder continuity}
Let $v$ be a positive and bounded weak solution to (\ref{eq. of v}). Then, there exist a positive constant $C$ depending only on $n, p, \tilde{c}, M, \lambda(0), \beta, \|\nabla v\|_{L^p(\Omega'_T)},  [g]_{\beta, \Omega'_T}$ and $[v]_{\beta, \Omega^\prime_T}$ and a positive exponent $\alpha<1$ depending only on $n,p$ and $\beta$ such that $\nabla v$ is locally H\"{o}lder continuous in $\Omega'_T$ with an exponent $\alpha$ on the usual parabolic metric. Furthermore, its H\"{o}lder constant is bounded above by $C$, where $[f]_\beta$ denote the H\"older semi-norm of a H\"older continuous function $f$ with a H\"older exponent~$\beta$.
\end{thm}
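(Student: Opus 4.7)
The plan is to treat~\eqref{eq. of v} as a small H\"older perturbation of the homogeneous evolutionary $p$-Laplacian with frozen coefficient, and apply the classical Campanato-decay scheme of DiBenedetto--Friedman for the latter.

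First I would observe that, since $0 < \tilde c \le u \le M$ by~\eqref{bounded from above and below1}, the coefficient $g^{p-1} = q^{1-p} v^{(1-q)(p-1)/q}$ is trapped between two positive constants, and since $z \mapsto z^{(1-q)(p-1)/q}$ is smooth on $[\tilde c^{\,q}, M^{q}]$, Theorem~\ref{Holder continuity} transfers the H\"older continuity of $v$ to $g^{p-1}$, with seminorm controlled by $[v]_{\beta, \Omega'_T}$, $\tilde c$, $M$, $p$, $q$. The forcing term $\lambda(t)\,v$ is uniformly bounded by $\lambda(0) M$ in view of the energy monotonicity and Proposition~\ref{lambda equality}.

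Next, fixing $z_0 = (x_0, t_0) \in \Omega'_T$, I would work on intrinsic parabolic cylinders $Q_\rho^{(\theta)}(z_0) := B_\rho(x_0) \times (t_0 - \theta \rho^p, t_0)$ with $\theta$ chosen so that the intrinsic scaling $\theta\,|\nabla v|^{p-2} \sim 1$ holds on average, and compare $v$ with the solution $w$ of the frozen homogeneous problem
\begin{equation*}
\partial_t w - g(z_0)^{p-1} \Delta_p w = 0 \text{ in } Q_\rho^{(\theta)}(z_0), \qquad w = v \text{ on } \partial_p Q_\rho^{(\theta)}(z_0).
\end{equation*}
The DiBenedetto--Friedman $C^{1,\alpha_0}$-theory for the homogeneous $p$-Laplacian then yields the Campanato-type decay
\begin{equation*}
\dashint_{Q_r^{(\theta)}} \bigl|\nabla w - (\nabla w)_{Q_r^{(\theta)}}\bigr|^p \, dz \le C \Bigl(\frac{r}{\rho}\Bigr)^{\alpha_0 p} \dashint_{Q_\rho^{(\theta)}} |\nabla w|^p \, dz, \qquad r \le \rho,
\end{equation*}
with $\alpha_0 = \alpha_0(n,p) \in (0,1)$. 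Testing the $(v-w)$-equation with $v-w$ and invoking the monotonicity~\eqref{BL2}, the coefficient oscillation bound $|g(z)^{p-1} - g(z_0)^{p-1}| \le [g^{p-1}]_\beta \rho^\beta$, and the $L^\infty$ bound on $\lambda v$ produces a perturbation estimate of the form
\begin{equation*}
\dashint_{Q_\rho^{(\theta)}} |\nabla (v-w)|^p \, dz \le C \rho^{\mu p} \dashint_{Q_\rho^{(\theta)}} \bigl(1 + |\nabla v|^p\bigr) \, dz
\end{equation*}
for some $\mu = \mu(\beta, p) > 0$.

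Combining these two ingredients through the triangle inequality gives the hybrid excess-decay
\begin{equation*}
\dashint_{Q_r^{(\theta)}} \bigl|\nabla v - (\nabla v)_{Q_r^{(\theta)}}\bigr|^p \, dz \le C \Bigl[\Bigl(\tfrac{r}{\rho}\Bigr)^{\alpha_0 p} + \Bigl(\tfrac{\rho}{r}\Bigr)^{n+p} \rho^{\mu p}\Bigr] \dashint_{Q_\rho^{(\theta)}} \bigl(1 + |\nabla v|^p\bigr) \, dz,
\end{equation*}
and a standard geometric iteration over a shrinking sequence of intrinsic cylinders produces $\dashint_{Q_r^{(\theta)}} |\nabla v - (\nabla v)_{Q_r^{(\theta)}}|^p \, dz \le C\, r^{\alpha p}$ with $\alpha = \alpha(n,p,\beta) \in (0,1)$. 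Campanato's isomorphism then yields H\"older continuity of $\nabla v$ in the intrinsic metric, which is converted to the usual parabolic metric $|x| + |t|^{1/p}$ via the two-sided bounds on $\theta$ afforded by $\tilde c \le u \le M$ and $\|\nabla v\|_{L^p(\Omega'_T)}$; all the constants listed in the theorem enter the final H\"older bound at this conversion. The main obstacle is the geometric stability of the intrinsic cylinders under iteration in the degenerate range $p > 2$: at each scale $\rho_k$ the parameter $\theta_k$ depends on the local $L^p$-average of $\nabla v$, and one must verify that $\theta_k$ and $\theta_{k+1}$ remain comparable so that the iteration closes without the bad factor $(\rho/r)^{n+p}$ swallowing the gain $(r/\rho)^{\alpha_0 p}$; the a priori H\"older regularity of $v$ together with the uniform lower bound $u \ge \tilde c$ is precisely what makes this comparability robust.
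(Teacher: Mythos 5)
Your proposal is correct and follows essentially the same route as the paper: the theorem is recorded there as a citation of \cite[Theorem 5.7]{Kuusi-Misawa-Nakamura}, whose proof is precisely the freezing-coefficient comparison on intrinsic cylinders with the DiBenedetto--Friedman decay for the homogeneous $p$-Laplacian that you outline, the H\"older continuity of the coefficient $g^{p-1}$ being supplied by Theorem~\ref{Holder continuity} together with the two-sided bounds $\tilde{c}\le u\le M$. The only slip is cosmetic: since $v=u^{q}$, the forcing term $\lambda(t)v$ is bounded by $\lambda(0)M^{q}$ rather than $\lambda(0)M$, while the coefficient indeed varies over $[\tilde{c}^{\,q},M^{q}]$ exactly as you note.
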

\medskip

By using an elementary algebraic estimate and a interior positivity, boundedness
and a H\"older regularity of $v$ and its gradient $\nabla v$ in Theorems~\ref{Holder continuity} and~\ref{Gradient Holder continuity},
we also obtain a local H\"{o}lder regularity of the weak solution $u$ to~\eqref{pS} and its gradient $\nabla u$, which gives our final assertion in Theorem~\ref{mainthm}. 

\begin{thm} [H\"{o}lder regularity for the $p$-Sobolev flow {\cite[Theorem 5.7]{Kuusi-Misawa-Nakamura}}]
Let $u$ be a positive and bounded weak solution to~\eqref{pS}. Then, there exist a positive exponent $\gamma<1$ depending only on $n,p,\beta, \alpha$ and a positive constant $C$ depending only on $n, p, \tilde{c}, M, \lambda(0), \beta, \alpha,\|\nabla u\|_{L^p(\Omega'_T)},  [g]_{\beta, \Omega'_T}$ and $[v]_{\beta, \Omega'_T}$ such that, both $u$ and $\nabla u$ are locally H\"{o}lder continuous in $\Omega'_T$ with an exponent $\gamma$ on a parabolic metric $|x|+|t|^{1/p}$ and on the parabolic one, respectively. The H\"{o}lder constants are bounded above by $C$, where $[f]_\beta$ denote the H\"older semi-norm of a H\"older continuous function $f$ with a H\"{o}lder exponent $\beta$.

\end{thm}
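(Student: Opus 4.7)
The plan is to reduce the claimed regularity of $u$ and $\nabla u$ to the already-established regularity of $v:=u^{q}$ and $\nabla v$ given by Theorems~\ref{Holder continuity} and~\ref{Gradient Holder continuity}, by exploiting the strict positivity and boundedness of $u$ together with the pointwise identity $u=v^{1/q}$. No new PDE analysis is needed at this stage; everything reduces to composition and product estimates for H\"older functions on a set where $v$ stays bounded away from zero.

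First I would combine Proposition~\ref{Interior positivity by the volume constraint} with the bound~\eqref{eq. Boundedness of pS} to get the two-sided bound $0<\tilde c\le u\le M$ on $\Omega'\times[0,T]$, which immediately gives $\tilde c^{q}\le v\le M^{q}$ on the same set. This keeps $v$ uniformly away from zero, so the coefficient $g=\tfrac{1}{q}v^{1/q-1}$ appearing in~\eqref{eq. of v} is uniformly bounded above and below, and equation~\eqref{eq. of v} is uniformly parabolic of $p$-Laplace type on $\Omega'_T$ with a bounded lower-order term $\lambda(t)v$. Theorem~\ref{Holder continuity} then delivers $v\in C^{\beta}_{\mathrm{loc}}$ on the metric $|x|+|t|^{1/p}$, and Theorem~\ref{Gradient Holder continuity} delivers $\nabla v\in C^{\alpha}_{\mathrm{loc}}$ on the usual parabolic metric, with H\"older seminorms controlled by the quantities listed in the final statement.

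Next I would transfer the regularity to $u$ by elementary algebra. Since $v$ takes values in the compact interval $[\tilde c^{q},M^{q}]$, the map $\Phi(s)=s^{1/q}$ is $C^{\infty}$ there, in particular Lipschitz with a constant $L_{0}=L_{0}(\tilde c,q)$, hence
\[
|u(z_{1})-u(z_{2})|\le L_{0}\,|v(z_{1})-v(z_{2})|,
\]
and $u$ inherits the H\"older exponent $\beta$ on the metric $|x|+|t|^{1/p}$ with constant $L_{0}[v]_{\beta,\Omega'_{T}}$. For the gradient, the chain rule yields
\[
\nabla u=\tfrac{1}{q}\,v^{1/q-1}\,\nabla v,
\]
a product of the smooth composition $\Psi(v):=\tfrac{1}{q}v^{1/q-1}$, which is H\"older with exponent $\beta$ on $\Omega'_{T}$ because $\Psi\in C^{\infty}([\tilde c^{q},M^{q}])$, and of $\nabla v$, which is H\"older with exponent $\alpha$. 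The standard Leibniz-type inequality
\[
|fh(z_{1})-fh(z_{2})|\le\|f\|_{\infty}|h(z_{1})-h(z_{2})|+\|h\|_{\infty}|f(z_{1})-f(z_{2})|
\]
then makes $\nabla u$ H\"older on the parabolic metric with exponent $\gamma:=\min(\alpha,\beta)$, with constant polynomial in $\|v\|_{\infty}$, $\|\nabla v\|_{\infty}$, $\tilde c$, $[v]_{\beta}$, and $[\nabla v]_{\alpha}$, i.e.\ controlled by the quantities listed in the theorem.

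The only bookkeeping point is to choose a single exponent serving both $u$ and $\nabla u$. I would take $\gamma:=\min(\alpha,\beta)\in(0,1)$, which depends only on $n,p,\beta,\alpha$ as required, and note that on the bounded set $\Omega'\times[0,T]$ the metrics $|x|+|t|^{1/p}$ and the usual parabolic metric are equivalent up to constants depending only on $n,p,\tilde c,M,T$, so H\"older continuity on one of them transfers to H\"older continuity on the other without any loss of exponent. I do not expect any real obstacle: the argument is pure pointwise algebra on top of Theorems~\ref{Holder continuity} and~\ref{Gradient Holder continuity}, and the only mild care needed is in chasing the explicit dependencies of the constants through the composition $u=v^{1/q}$ and the product expansion of $\nabla u$.
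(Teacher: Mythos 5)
Your proposal is correct and follows essentially the same route as the paper, which likewise deduces the statement from Theorems~\ref{Holder continuity} and~\ref{Gradient Holder continuity} via the two-sided bound \eqref{bounded from above and below1} and elementary algebraic (composition and product) estimates for $u=v^{1/q}$ and $\nabla u=\tfrac1q v^{1/q-1}\nabla v$. The only imprecision is your closing remark that the two parabolic metrics are equivalent ``without any loss of exponent'' (they are only mutually comparable up to a power on bounded sets), but this is harmless since the theorem, like Theorems~\ref{Holder continuity} and~\ref{Gradient Holder continuity}, measures $u$ and $\nabla u$ in their respective metrics and no transfer between metrics is actually needed.
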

\medskip

From Steps 1 to 3, the proof of Theorem~\ref{mainthm} is concluded.

\end{proof}
%%%%%%%%%%%%%%%%%%%%%%%%%%%%%%%%%%%%%%%%%%%%%%%%%%%%%%%%%%%%%%%%%%%%%%%%%%
%%%%%%%%%%%%%%%%%%%%%%%%%5
%%%%%%%%Appendix%%%%%%%%%%%%%%%%%

\begin{appendices}
%%%%%%%%%%%%%%%%%%%%%%%%%%%%%%%%%%%%
%%%%%%%%%%%%%%%%%%%%%%%%%%%%%%%%%%%%
%%%%%%%%%%%%%%%%%%%%%%%%%%%%%

%%%%%figure number in appandix%%%%%%%%%%%%%%%%%%%%
\makeatletter
\renewcommand{\thefigure}{\Alph{section}.\arabic{figure}}
\@addtoreset{figure}{section}
\makeatother
%%%%%%%%%%%%%%%%%%%%%%%%%%%%%%%%%

\section{Refined Expansion of Positivity}\label{Refinement of Expansion of Positivity}
This section is devoted to the refinement of the expansion of positivity which is proven in \cite[Section 4]{Kuusi-Misawa-Nakamura}. Firstly, we give the transformation stretching the time-interval, which is needed for the proof of Theorem~\ref{ex. thm prime} and Proposition~\ref{ex. cor prime}.
\medskip

In this section, following \cite[Sections 3 and 4]{Kuusi-Misawa-Nakamura}, we consider the \emph{doubly nonlinear equations of $p$-Sobolev flow type}:
\begin{equation}\label{pST}
\begin{cases}
\,\partial_tu^q-\Delta_pu=cu^q\quad &\mathrm{in}\,\,\Omega_T\\
\, 0 \leq u \leq M \quad &\mathrm{on}\,\,\partial_{p}\Omega_T,  \tag{$p$ST}
\end{cases}
\end{equation}
where $T \in (0,\infty)$, $u=u(x,t):\Omega_{T} \to \mathbb{R}$ be a nonnegative real valued function, and $c$ and $M$ are nonnegative constant and positive one, respectively. Here the initial value $u_{0}$ is in the Sobolev space $W_{0}^{1,p}(\Omega)$, positive and bounded in $\Omega$.
\smallskip

As mentioned in \cite[Remark 3.3]{Kuusi-Misawa-Nakamura}, a nonnegative weak solution of $p$-Sobolev flow~\eqref{pS} is %a weak subsolution of~\eqref{pST} with $c=\|\nabla u_0\|_{L^p(\Omega)}^p$ and 
a weak supersolution of~\eqref{pST} with $c=0$.
\medskip

Here we recall the fundamental positivity results, proved in \cite[Section 4]{Kuusi-Misawa-Nakamura}, which are referred later. 
\begin{prop}[{\cite[Proposition 4.1]{Kuusi-Misawa-Nakamura}}]
\label{ex. prop}Let $u$ be a nonnegative weak supersolution of~\eqref{pST}. Let $B_\rho (x_0) \subset \Omega$ with center $x_0 \in \Omega$ and radius $\rho > 0$, and $t_0 \in (0, T]$. Suppose that
\begin{equation}\label{ex. propassumption}
\big|B_\rho(x_0) \cap \{u(t_0) \geq L\} \big| \geq \alpha |B_\rho|
\end{equation}
holds for some $L>0$ and $\alpha \in (0,1]$. Then there exist positive numbers $\delta_{0},\,\varepsilon_{0} \in (0,1)$ depending only on $p,n$ and $\alpha$ and independent of $L$ such that
\begin{equation}\label{ex. propconc}
\big|B_\rho(x_0) \cap \{u(t) \geq \varepsilon L\}\big| \geq \frac{\alpha}{2} |B_\rho|
\end{equation}
holds for any positive $\delta \leq \delta_0$, any positive $\varepsilon \leq \varepsilon_0$ and all $t \in [t_0,\,t_0+\delta L^{q+1-p}\rho^p ]$. Here, if $t_0$ is very close to $T$, then $\delta> 0$ is chosen so small that $\delta L^{q+1-p} \rho^p = T-t_0$.
\end{prop}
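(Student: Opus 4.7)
The plan is to run the DiBenedetto-style logarithmic expansion-of-positivity scheme, carefully adapted to the intrinsic time scaling $L^{q+1-p}\rho^{p}$ dictated by the doubly nonlinear structure of~\eqref{pST}.

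\textbf{Reduction to $L=\rho=1$.} I first exploit the scaling covariance of~\eqref{pST}. Setting
\[
\tilde u(y,\tau):=L^{-1}\,u\bigl(x_{0}+\rho y,\;t_{0}+L^{p-q-1}\rho^{p}\tau\bigr),\qquad (y,\tau)\in B_{1}\times(0,\delta_{0}),
\]
the supersolution inequality transforms into $\partial_{\tau}\tilde u^{q}-\Delta_{p}\tilde u\ge cL^{q+1-p}\tilde u^{q}\ge 0$ on $B_{1}\times(0,\delta_{0})$, while~\eqref{ex. propassumption} becomes $|B_{1}\cap\{\tilde u(0)\ge 1\}|\ge \alpha|B_{1}|$. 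Thereby~\eqref{ex. propconc} is reduced to
\[
\bigl|B_{1}\cap\{\tilde u(\tau)\ge\varepsilon\}\bigr|\ge \tfrac{\alpha}{2}|B_{1}|\qquad\text{for every }\tau\in[0,\delta],
\]
with $\delta_{0},\varepsilon_{0}\in(0,1)$ to be chosen depending only on $n,p,\alpha$.

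\textbf{Logarithmic energy estimate.} I would test the weak supersolution inequality (with $\partial_{\tau}\tilde u^{q}$ replaced by its Steklov average) against $\zeta(y)^{p}\,(\Psi^{2})'(\tilde u)$, where $\zeta\in C_{0}^{\infty}(B_{1})$ is a cut-off with $\zeta\equiv 1$ on $B_{1-\sigma}$ and $\Psi$ is the standard DiBenedetto logarithmic truncation
\[
\Psi(\xi):=\left[\log\frac{1+\varepsilon}{\xi+\varepsilon}\right]_{+},
\]
which vanishes on $\{\xi\ge 1\}$, is bounded above by $H:=\log((1+\varepsilon)/\varepsilon)$ and satisfies $\Psi(\xi)\ge H-\log 2$ on $\{\xi\le\varepsilon\}$. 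The chain rule (valid for Steklov averages and then passed to the limit) expresses the time term as $\frac{d}{d\tau}\int\zeta^{p}\mathcal H(\tilde u)\,dy$ with $\mathcal H'(\xi)=q\xi^{q-1}(\Psi^{2})'(\xi)$, while the $p$-Laplacian contributes a nonnegative principal part $\int\zeta^{p}|\nabla\tilde u|^{p}(\Psi^{2})''(\tilde u)\,dy$ (positive because $\Psi''=(\Psi')^{2}$ yields $(\Psi^{2})''=2(\Psi')^{2}(1+\Psi)\ge 0$) minus a cut-off cross term $C\int\zeta^{p-1}|\nabla\zeta||\nabla\tilde u|^{p-1}|(\Psi^{2})'(\tilde u)|\,dy$ that is absorbed by Young's inequality; the nonnegative $cL^{q+1-p}\tilde u^{q}$ term only aids the supersolution inequality and is discarded. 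Using the a priori upper bound of $\tilde u$ to control the weight $q\tilde u^{q-1}$ appearing in $\mathcal H$, integration in $\tau$ yields
\[
\sup_{0\le\tau\le\delta}\int_{B_{1-\sigma}}\Psi^{2}(\tilde u(\tau))\,dy\;\le\; \int_{B_{1}}\Psi^{2}(\tilde u(0))\,dy+\frac{C\delta}{\sigma^{p}}|B_{1}|H.
\]

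\textbf{Measure conclusion and main obstacle.} The initial integral is at most $(1-\alpha)H^{2}|B_{1}|$, since $\Psi(\tilde u(0))=0$ on the set $\{\tilde u(0)\ge 1\}$. On the other hand, on $\{\tilde u(\tau)<\varepsilon\}\cap B_{1-\sigma}$ the integrand is at least $(H-\log 2)^{2}$, so Chebyshev's inequality gives
\[
\bigl|\{\tilde u(\tau)<\varepsilon\}\cap B_{1-\sigma}\bigr|\;\le\; \frac{(1-\alpha)H^{2}|B_{1}|+C\delta\sigma^{-p}|B_{1}|H}{(H-\log 2)^{2}}.
\]
Choosing first $\varepsilon_{0}=\varepsilon_{0}(n,p,\alpha)$ so small (hence $H$ large) that the first quotient is $\le(1-\tfrac{3\alpha}{4})|B_{1}|$, then $\sigma=\sigma(\alpha)$ so that $|B_{1}\setminus B_{1-\sigma}|\le \tfrac{\alpha}{8}|B_{1}|$, and finally $\delta_{0}=\delta_{0}(n,p,\alpha)$ so that the last term is $\le \tfrac{\alpha}{8}|B_{1}|$, yields~\eqref{ex. propconc} after unscaling. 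The principal obstacle is the rigorous Steklov-average manipulation of the nonlinear time term $\partial_{\tau}\tilde u^{q}$ paired with a $\tilde u$-dependent logarithmic test function: the $+\varepsilon$ shift inside the logarithm is precisely what keeps $\Psi'$ bounded near $\{\tilde u=0\}$, while the assumption $q>1$ (following from $p<n$ and $q=p^{\ast}-1$) and the a priori upper bound of $\tilde u$ together tame the weight $q\tilde u^{q-1}$ in the chain-rule identity and make the whole computation rigorous.
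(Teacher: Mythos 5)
Your route differs from the paper's: the paper (deferring to \cite[Proposition 4.1]{Kuusi-Misawa-Nakamura}) obtains \eqref{ex. propconc} from the Caccioppoli estimate \eqref{local energy ineq} for the truncations $(k-u)_+$ at the levels $k=L$ and $k=\varepsilon L$ together with De Giorgi's method, whereas you propose the DiBenedetto logarithmic lemma. Before addressing the substance, note a sign slip in your rescaling: the intrinsic time factor must be $L^{q+1-p}\rho^{p}$, not $L^{p-q-1}\rho^{p}$, both to make $\partial_{\tau}\tilde u^{q}$ and $\Delta_{p}\tilde u$ carry the same prefactor $L^{1-p}\rho^{p}$ and to map $[t_{0},t_{0}+\delta L^{q+1-p}\rho^{p}]$ onto $[0,\delta]$.

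The genuine gap is in the logarithmic energy estimate itself. Testing with $\zeta^{p}\bigl(-(\Psi^{2})'(\tilde u)\bigr)$, the time term integrates to $\frac{d}{d\tau}\int\zeta^{p}\mathcal H(\tilde u)\,dy$ with $\mathcal H'(\xi)=q\xi^{q-1}(\Psi^{2})'(\xi)$, exactly as you say; but you then replace $\mathcal H$ by $\Psi^{2}$ on \emph{both} sides of the resulting inequality, and this is not legitimate. An upper bound on $\tilde u$ only gives $\mathcal H\le C\,\Psi^{2}$, which goes the wrong way for the supremum term on the left; a lower bound $\mathcal H\ge c\,\Psi^{2}$ would require $q\xi^{q-1}$ to be bounded \emph{below}, which fails precisely on the set $\{\tilde u<\varepsilon\}$ you need to measure. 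Quantitatively, since $q>1$ one has $\mathcal H(0)=\int_{0}^{1}2q s^{q-1}\Psi(s)(s+\varepsilon)^{-1}\,ds\le \tfrac{2q}{q-1}H\cdot\varepsilon^{0}$ and in fact $\mathcal H(0)\to \tfrac{2q}{(q-1)^{2}}$ as $\varepsilon\searrow0$: the weight $q s^{q-1}$ kills the logarithmic blow-up, so $\mathcal H$ stays \emph{bounded} while $\Psi^{2}(0)=H^{2}\to\infty$. Consequently your displayed estimate for $\sup_{\tau}\int\Psi^{2}(\tilde u(\tau))$ does not follow from the computation, and the Chebyshev step based on the bounds $(1-\alpha)H^{2}$ and $(H-\log 2)^{2}$ is unjustified. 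The argument is repairable: run the Chebyshev step with $\mathcal H$ itself, using $\mathcal H(\tilde u(0))=0$ on $\{\tilde u(0)\ge1\}$, $\mathcal H(\varepsilon)\ge\mathcal H(0)-2H\varepsilon^{q-1}$, and hence $\mathcal H(0)/\mathcal H(\varepsilon)\to1$ as $\varepsilon\searrow0$; choosing $\varepsilon_{0}$, then $\sigma$, then $\delta_{0}$ in that order recovers \eqref{ex. propconc} (for $\varepsilon=\varepsilon_{0}$ and $\delta=\delta_{0}$, from which the general case follows by inclusion). But as written, the key inequality and the constants driving the conclusion refer to the wrong functional.
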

\begin{proof}
This proposition is proved by combination of the De Giorgi iteration method and the following Caccioppoli type estimate. See \cite[Proposition 4.1]{Kuusi-Misawa-Nakamura} for detailed proof.
\end{proof}

\begin{prop}[{\cite[Proposition 3.8]{Kuusi-Misawa-Nakamura}}]
Let $k\geq 0$. Let $u$ be a nonnegative weak supersolution of~\eqref{pST}.  Let $K$ be a subset compactly contained in $\Omega$, and $0< t_1 < t_2 \leq T$. Here we use the notation $K_{t_1, t_2} = K \times (t_1, t_2)$. Let $\zeta$ be a Lipschitz function such that $\zeta=0$  outside $K_{t_1, t_2}$. Then, there exists a positive constant $C$ depending only on $p,n$ such that
\begin{align}\label{local energy ineq}
\lefteqn{
\esssup_{t_{1}<t<t_{2}}\int_{K \times \{t\}}(k-u)_{+}^{q+1}\zeta^{p}\,dx+\int_{K_{t_1,t_2}}|\nabla (k-u)_{+}\zeta |^{p}dxdt
} \qquad &
\notag \\
&\leq C\int_{K \times \{t_{1}\}}k^{q-1}(k-u)_{+}^{2}\zeta^{p}\,dx+C\int_{K_{t_1,t_2}}(k-u)_{+}^{p}|\nabla \zeta|^{p}\,dxdt \notag \\
&\qquad +C\int_{K_{t_1,t_2}}k^{q-1}(k-u)_{+}^{2}|\zeta_{t}|\,dxdt.
\end{align}
\end{prop}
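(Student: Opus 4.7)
The plan is to apply the standard De Giorgi--DiBenedetto strategy, adapted to the doubly nonlinear structure of \eqref{pST}. I would test the weak supersolution formulation with the nonnegative Lipschitz function $\varphi = (k-u)_+\zeta^p$, which vanishes outside $K_{t_1,t_2}$ by the support assumption on $\zeta$; since $\partial_t u^q$ is not a priori locally integrable for a supersolution, the whole computation is carried out on Steklov averages $[u^q]_h$ in $t$ and the limit $h \to 0^+$ is taken at the end. The $cu^q(k-u)_+\zeta^p$ contribution arising from the right-hand side of \eqref{pST}$_1$ is in our favor: as $c \geq 0$ and $\varphi \geq 0$, it can simply be discarded from the supersolution inequality, so the constant $C$ depends only on $p,n$.

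The key device for controlling the parabolic term is the auxiliary primitive
\[
F(u,k) := \int_u^k q\sigma^{q-1}(k-\sigma)\,d\sigma = \frac{k^{q+1}}{q+1} - k u^q + \frac{q}{q+1} u^{q+1}, \quad u \leq k,
\]
extended by zero for $u > k$. Its defining identity $\partial_u F(u,k) = -q u^{q-1}(k-u)_+$ yields, in Steklov-averaged form, $(\partial_t u^q)(k-u)_+ = -\partial_t F(u,k)$ a.e., and it satisfies the elementary two-sided bound
\[
c(q)(k-u)_+^{q+1} \leq F(u,k) \leq \tfrac{q}{2}\, k^{q-1}(k-u)_+^2,
\]
obtained by writing $F(u,k) = \frac{k^{q+1}}{q+1}\, g(u/k)$ with $g(\rho) = q(q+1)\int_\rho^1 s^{q-1}(1-s)\,ds$, bounding $g(\rho) \leq \frac{q(q+1)}{2}(1-\rho)^2$ directly from $s^{q-1} \leq 1$, and deducing $g(\rho) \geq c(q)(1-\rho)^{q+1}$ by treating $\rho \leq 1/2$ and $\rho \geq 1/2$ separately. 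After integrating $(\partial_t u^q)(k-u)_+\zeta^p = -\partial_t F(u,k)\zeta^p$ from $t_1$ to $\tau\in (t_1,t_2)$ and shifting the time derivative onto $\zeta^p$, the terminal endpoint combined with the lower bound on $F$ delivers $\int_{K\times\{\tau\}}(k-u)_+^{q+1}\zeta^p\,dx$ on the left, while the initial endpoint and the remainder $\int F(u)\,p\zeta^{p-1}\zeta_t$ are each controlled, via the upper bound on $F$, by the first and third terms on the right-hand side of the claimed inequality.

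The diffusion term is routine: expanding $|\nabla u|^{p-2}\nabla u \cdot \nabla\varphi$ produces $-\chi_{\{u<k\}}|\nabla u|^p\zeta^p + p(k-u)_+\zeta^{p-1}|\nabla u|^{p-2}\nabla u \cdot \nabla\zeta$. Young's inequality splits the cross term into a small multiple of $|\nabla(k-u)_+|^p\zeta^p$ (absorbed on the left via $|\nabla u|\chi_{\{u<k\}} = |\nabla(k-u)_+|$) plus a multiple of $(k-u)_+^p|\nabla\zeta|^p$, matching the second term on the right. The combined form $|\nabla((k-u)_+\zeta)|^p$ on the left is then reconstituted via $|\nabla((k-u)_+\zeta)|^p \leq 2^{p-1}(|\nabla(k-u)_+|^p\zeta^p + (k-u)_+^p|\nabla\zeta|^p)$, after which one takes the essential supremum in $\tau\in (t_1,t_2)$.

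I expect the main obstacle to be the rigorous justification of the time integration-by-parts, since $\partial_t u^q$ need not be a locally integrable function for a mere supersolution. I would carry out the identity $(\partial_t u^q)(k-u)_+ = -\partial_t F(u,k)$ on the Steklov averages $[u^q]_h$ with the regularized truncation $(k - [u^q]_h^{1/q})_+\zeta^p$, then invoke strong $L^{q+1}_{\mathrm{loc}}$-convergence of $[u]_h$ together with weak convergence of $[|\nabla u|^{p-2}\nabla u]_h$ to pass to the limit $h \to 0^+$; the continuity of $\sigma \mapsto \sigma^{1/q}$ is used to identify the limit of the truncated objects.
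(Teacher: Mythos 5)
Your proposal is correct and takes essentially the same route the cited reference uses: test with $(k-u)_+\zeta^p$, regularize in time by Steklov averaging, absorb the time derivative via the primitive $F(u,k)=\int_u^k q\sigma^{q-1}(k-\sigma)\,d\sigma$ with its two-sided comparison to $(k-u)_+^{q+1}$ and $k^{q-1}(k-u)_+^2$, drop the favorable $c\,u^q\varphi\ge 0$ term, and Young-split the diffusion cross term. The only point to make explicit is that the stated form of the right-hand side (with $|\zeta_t|$ rather than $\zeta^{p-1}|\zeta_t|$) presumes the usual normalization $0\le\zeta\le 1$, which should be recorded alongside the support condition on $\zeta$.
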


We further recall the following crucial lemma.

\begin{lem}[{\cite[Lemma 4.2]{Kuusi-Misawa-Nakamura}}]\label{ex. crucial lemma}
Let $u$ be a nonnegative weak supersolution of~\eqref{pST}. Suppose further~\eqref{ex. propassumption}.
Let $Q_{4\rho}(z_{0}):=B_{4\rho}(x_{0}) \times (t_{0}, t_{0}+\delta L^{q+1-p}\rho^p) \subset \Omega_{T}$, where $\delta$ is selected in Proposition~\ref{ex. prop}. Then for any $\nu \in (0,1)$ there exists a positive number $\varepsilon_\nu$ depending only on $p,n,\alpha,\delta,\nu$ such that
\begin{equation*}
\big|Q_{4\rho}(z_{0}) \cap \{u<\varepsilon_\nu L\} \big| <\nu \big|Q_{4\rho}\big|.
\end{equation*}
\end{lem}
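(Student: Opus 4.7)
\smallskip

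\noindent\textbf{Proof proposal.} The plan is to run a De Giorgi style geometric iteration on shrinking levels $k_j = \varepsilon L / 2^j$, with $\varepsilon$ to be taken from Proposition~\ref{ex. prop}. The two essential inputs will be the local energy estimate~\eqref{local energy ineq} applied on the cylinder $Q_{4\rho}(z_0)$, which controls the $L^p$-norm of $\nabla (k_j-u)_+$, and the conclusion~\eqref{ex. propconc} of Proposition~\ref{ex. prop}, which guarantees that at every time slice $t\in(t_0,t_0+\delta L^{q+1-p}\rho^p)$ the set $B_\rho(x_0)\cap\{u(t)\ge \varepsilon L\}$ still has at least $\frac{\alpha}{2}|B_\rho|$ positive density. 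Since $B_\rho\subset B_{4\rho}$, this immediately produces a uniform-in-time lower bound
$$
|B_{4\rho}\setminus\{u(t)<k_1\}|\;\ge\;\tfrac{\alpha}{2\cdot 4^n}|B_{4\rho}|,
$$
which is precisely the density hypothesis needed to feed into the DiBenedetto type isoperimetric inequality.

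The key second step will be to apply, at each fixed $t$, the DiBenedetto--Poincar\'e inequality on $B_{4\rho}$ with the triple of levels $k_{j+1}<k_j\le\varepsilon L$, giving
$$
(k_j-k_{j+1})\,|B_{4\rho}\cap\{u(t)<k_{j+1}\}|
\;\le\;\frac{C(n)\,\rho}{\alpha}\int_{B_{4\rho}\cap\{k_{j+1}<u(t)<k_j\}}|\nabla u(t)|\,dx.
$$
Since $k_j-k_{j+1}=\varepsilon L/2^{j+1}$, I would then integrate in $t$ over $(t_0,t_0+\delta L^{q+1-p}\rho^p)$, apply H\"older in the space variable with exponents $p$ and $p/(p-1)$, and then H\"older in the time variable, obtaining
$$
|A_{j+1}|\;\le\;\frac{C(n,p)}{\alpha}\,\rho\,
\left(\int_{Q_{4\rho}}|\nabla(k_j-u)_+|^p\,dz\right)^{\!1/p}
|A_j\setminus A_{j+1}|^{(p-1)/p},
$$
with $A_j:=Q_{4\rho}\cap\{u<k_j\}$.

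The gradient integral is controlled by~\eqref{local energy ineq} applied with $k=k_j$ and a cutoff $\zeta$ equal to $1$ on $Q_{4\rho}$, vanishing outside $B_{8\rho}\times(t_0,t_0+\delta L^{q+1-p}\rho^p)$, and independent of $t$. Bounding $(k_j-u)_+\le k_j\le\varepsilon L$ and using $|\nabla\zeta|\le C/\rho$, the right-hand side of the energy estimate is dominated by $C(\alpha,\delta)\,k_j^p\,L^{q+1-p}\rho^n$, which has precisely the intrinsic scaling of $|Q_{4\rho}|$ so that after raising to the power $p/(p-1)$ the inequality simplifies to the canonical iterative form
$$
|A_{j+1}|^{p/(p-1)}\;\le\;C(\alpha,\delta,n,p)\,|Q_{4\rho}|^{1/(p-1)}\,(|A_j|-|A_{j+1}|).
$$

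Since the sequence $\{|A_j|\}$ is non-increasing, summing this inequality over $j=0,1,\dots,N-1$ produces a telescoping right-hand side bounded by $|Q_{4\rho}|^{p/(p-1)}$, whence $N|A_N|^{p/(p-1)}\le C(\alpha,\delta,n,p)|Q_{4\rho}|^{p/(p-1)}$ and therefore $|A_N|\le C(\alpha,\delta,n,p)\,N^{-(p-1)/p}|Q_{4\rho}|$. Given $\nu\in(0,1)$ it then suffices to pick $N=N(\nu,\alpha,\delta,n,p)$ large enough that $C\,N^{-(p-1)/p}<\nu$ and set $\varepsilon_\nu:=\varepsilon/2^{N}$. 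The main obstacle, in my view, is not any single step but rather the bookkeeping of the intrinsic scaling: making sure that the powers of $L$ generated by the local energy estimate cancel exactly against those produced by the DiBenedetto inequality and the definition of $k_j$, so that the iteration inequality is genuinely dimension-free. Once the two sides are balanced the rest is a standard geometric summation.
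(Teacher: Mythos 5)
Your proposal is correct and follows essentially the same route the paper indicates (the paper only cites \cite[Lemma 4.2]{Kuusi-Misawa-Nakamura} and summarizes the method as ``Caccioppoli type estimate and De Giorgi's inequality''): the uniform-in-time density bound from Proposition~\ref{ex. prop}, the De Giorgi isoperimetric inequality slice by slice, the Caccioppoli bound~\eqref{local energy ineq} with a time-independent cutoff, and a telescoping geometric summation; the intrinsic powers of $L$ and $\rho$ cancel exactly as you anticipated. Two small bookkeeping remarks: start the levels at $j\geq 1$ so that $k_j<\varepsilon L$ gives the strict density bound required by the isoperimetric lemma at the upper level, and the cutoff supported in $B_{8\rho}$ needs $B_{8\rho}\subset\Omega$ (the paper in fact works under $Q_{16\rho}\subset\Omega_T$ in the surrounding subsection, so this is consistent with its actual hypotheses even though the lemma's statement is phrased only with $Q_{4\rho}\subset\Omega_T$).
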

\begin{proof}
This lemma is also shown by the above Caccioppoli type estimate and De Giorgi's inequality. See \cite[Lemma 4.2]{Kuusi-Misawa-Nakamura} for detailed proof.
\end{proof}

\medskip

As a corollary of \cite[Theorem 4.4]{Kuusi-Misawa-Nakamura}, if a solution is positive at some time $t_0$, its positivity expands in space-time  without ''waiting time''. This corollary is used in the proof of Proposition~\ref{ex. cor prime}. See Appendix~\ref{two proofs}.

\begin{prop}[{\cite[Corollary 4.6]{Kuusi-Misawa-Nakamura}}]\label{key cor}
Let $u$ be a nonnegative weak supersolution of~\eqref{pST}. Assume  that $u(t_0)>0$ in $B_{4 \rho} (x_0) \subset \Omega$. Then there exist positive numbers $\eta_0$ and  $\tau_0$ such that \begin{equation*}
u \geq \eta_0 \quad \textrm{a.e.}\quad  \textrm{in}\quad  B_{2\rho}(x_0) \times (t_0, t_0+\tau_0).
\end{equation*}
\end{prop}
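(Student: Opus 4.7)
The plan is to derive Proposition~\ref{key cor} from the measure-theoretic expansion of positivity (Proposition~\ref{ex. prop}), the crucial lemma (Lemma~\ref{ex. crucial lemma}), and a De~Giorgi iteration fueled by the local energy inequality~\eqref{local energy ineq}. Compared with the waiting-time version, the pointwise hypothesis $u(\cdot,t_0)>0$ in $B_{4\rho}(x_0)$ is strong enough to make the measure density at time $t_0$ as close to $1$ as one wishes, and this is exactly what eliminates the waiting time.

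First, I would turn the pointwise positivity into a measure density: since $u(t_0)>0$ almost everywhere in $B_{4\rho}(x_0)$, for any $\alpha\in(0,1)$ one can choose $L_0>0$ small enough that
\[
\bigl|B_\rho(x_0)\cap\{u(t_0)\geq L_0\}\bigr|\geq \alpha\,|B_\rho(x_0)|.
\]
This is exactly the hypothesis~\eqref{ex. propassumption} required to invoke Proposition~\ref{ex. prop}, which produces $\delta_0,\varepsilon_0\in(0,1)$ such that
\[
\bigl|B_\rho(x_0)\cap\{u(t)\geq \varepsilon_0 L_0\}\bigr|\geq \tfrac{\alpha}{2}\,|B_\rho(x_0)| \quad \text{for all } t\in[t_0,\,t_0+\tau_0],
\]
with $\tau_0:=\delta_0\,L_0^{q+1-p}\rho^p$. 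Observe that this interval begins at $t_0$ itself, which is where the absence of a waiting time manifests.

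Next, this persistent measure density feeds Lemma~\ref{ex. crucial lemma} on the cylinder $Q_{4\rho}(z_0):=B_{4\rho}(x_0)\times(t_0,t_0+\tau_0)$: for any prescribed $\nu\in(0,1)$ one obtains $\varepsilon_\nu>0$ such that
\[
\bigl|Q_{4\rho}(z_0)\cap\{u<\varepsilon_\nu L_0\}\bigr|<\nu\,|Q_{4\rho}(z_0)|.
\]
To close the argument I would run a De~Giorgi iteration on nested cylinders shrinking towards $B_{2\rho}(x_0)\times(t_0,t_0+\tau_0)$, working with truncations $(k_j-u)_+$ for levels $k_j\searrow \eta_0:=\tfrac{1}{2}\varepsilon_\nu L_0$. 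The local Caccioppoli-type inequality~\eqref{local energy ineq}, combined with the parabolic Sobolev embedding, yields a recursive estimate of the form $Y_{j+1}\leq C\,b^{j} Y_j^{1+\kappa}$ for the normalized measures $Y_j$ of the sub-level sets; taking $\nu$ small enough makes $Y_0$ fall below the critical threshold, so $Y_j\to 0$, and $u\geq \eta_0$ almost everywhere in $B_{2\rho}(x_0)\times(t_0,t_0+\tau_0)$ follows.

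The main technical obstacle is the De~Giorgi iteration itself, where one must carefully track the $k_j^{q-1}$-dependence of the right-hand side of~\eqref{local energy ineq} so that the intrinsic time-scale $L_0^{q+1-p}$ is respected uniformly across all iteration levels, and verify that the $k$-dependent terms can be absorbed into the top-level quantities without destroying the geometric gain. Once that is in place, the improvement over the waiting-time version is purely qualitative: the freedom to push $\alpha$ close to $1$ at the outset removes the need for the density to spread before being useful, producing positivity starting from $t_0$ rather than from $t_0$ plus a waiting time of order $L_0^{q+1-p}\rho^p$.
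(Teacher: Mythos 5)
Your overall architecture (Proposition~\ref{ex. prop} $\to$ Lemma~\ref{ex. crucial lemma} $\to$ De~Giorgi iteration via~\eqref{local energy ineq}) is the paper's, but there is a genuine gap at the one point that actually matters here: the mechanism that removes the waiting time. You replace the hypothesis by ``for any $\alpha<1$ choose $L_0$ with $|B_\rho\cap\{u(t_0)\ge L_0\}|\ge\alpha|B_\rho|$'' and claim that pushing $\alpha$ close to $1$ is what eliminates the waiting time. It is not. To conclude $u\ge\eta_0$ on a cylinder whose time interval \emph{starts at} $t_0$, the nested cylinders in the De~Giorgi iteration must all be anchored at $t_0$, so the cut-off must be time-independent and the term
\begin{equation*}
C\int_{K\times\{t_0\}}k^{q-1}(k-u)_+^{2}\zeta^{p}\,dx
\end{equation*}
in~\eqref{local energy ineq} survives. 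With your choice of $L_0$ the set $B_{4\rho}\cap\{u(t_0)<\eta_0\}$ may have positive measure for \emph{every} positive level (e.g.\ if $\essinf u(t_0)=0$), so this term is bounded below by a fixed positive quantity independent of the iteration index. The recursion is then not $Y_{j+1}\le Cb^{j}Y_j^{1+\kappa}$ but $Y_{j+1}\le Cb^{j}Y_j^{1+\kappa}+Cb^{j}A$ with $A>0$ fixed, and the fast geometric convergence lemma fails; no smallness of $\nu$ or of $1-\alpha$ repairs this, because the additive term does not decay in $j$. Without killing that term you only recover the waiting-time statement (positivity on a later sub-box after subdividing the cylinder in time, as in Theorem~\ref{Expansion of positivity until end time}).

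The paper's proof avoids this precisely by setting $L:=\inf_{B_{4\rho}(x_0)}u(t_0)>0$, i.e.\ taking $\alpha=1$ with the \emph{correct} level: then $(k-u)_+(\cdot,t_0)\equiv 0$ in $B_{4\rho}(x_0)$ for every level $k\le L$ used in the iteration, the initial-slice term vanishes identically, the time-independent cut-off is admissible, and $Y_j\to0$ on cylinders starting at $t_0$. (This is also why, in the subsequent application in the proof of Proposition~\ref{ex. cor prime}, the quantities $L_i=\inf_{4B_i}u(t_0)$ appear.) So you should replace your first step by the full-measure pointwise lower bound $u(t_0)\ge L$ a.e.\ in $B_{4\rho}(x_0)$ with $L=\inf_{B_{4\rho}(x_0)}u(t_0)$; the rest of your outline then goes through as in the paper.
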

\begin{proof}
The proof is done by combination of letting $L=\inf_{B_{4\rho}(x_{0})} u(t_{0})$ in Proposition~\ref{ex. prop} and De Giorgi's iteration method. See \cite[Corollary 4.6]{Kuusi-Misawa-Nakamura} for detailed proof.
\end{proof}
%
%%%%%%%%%%%%%
\subsection{Expansion of Positivity via Transformation Stretching the Time-interval}
%%%%%%%%%%%%%%%%%%%%%%%%%%%%%%%%%%
We study the expansion of interior positivity under a changing of variables stretching the time-interval.
We choose $\rho>0$ such that 
\[
Q_{16\rho}(z_{0}):=B_{16\rho}(x_{0}) \times (t_{0},t_{0}+\delta L^{q+1-p}\rho^{p} ) \subset \Omega_{T},
\]
where the a positive $\delta \leq \delta_0$ is selected in Proposition~\ref{ex. prop}. By translation, we may assume $z_{0}=(x_{0},t_{0})=(0,0)$. Following \cite[Section 5.1, pp.73--78]{DiBenedetto2}, we consider the following changing of variables stretching the time-interval:
\begin{equation}\label{def. of transformation}
y=\frac{x}{\rho}, \quad -e^{-\tau}=\frac{t-\delta L^{q+1-p}\rho^{p}}{\delta L^{q+1-p}\rho^{p}}.
\end{equation}
\begin{figure}[h]
\begin{center}
\begin{tikzpicture}[domain=0:5, samples=100, very thick] % 定義域、点の数、線幅
\draw (0,0) node[below left]{\small O}; % 原点、0でも、above, below, left, rightで位置指定
  % 位置指定はanchor=north, south, east, westでも可能
\draw[thick, ->] (-0.5,0)--(5.2,0) node[right] {\small $\tau$}; % x軸、[->]で矢印、他に[-stealth]等
\draw[thick, ->] (0,-0.6)--(0,2.6) node[above] {\small $t$}; % y軸
\draw[dotted, thick] (0,2.15)--(5.2,2.15);
\draw (0,2.15) node[left] {\small $\delta L^{q+1-p}\rho^{p}$};
\draw [domain=0:4.5] plot(\x, {-2.0*exp(-\x)+2}); 
\node at(4,1) {$-e^{-\tau}=\frac{t-\delta L^{q+1-p}\rho^{p}}{\delta L^{q+1-p}\rho^{p}}$};
%\draw (1.8,-1.0) node[below] {\small Figure. Stretching the time-interval};
\end{tikzpicture}
\end{center}
\caption{Stretching the time-interval}
\end{figure}
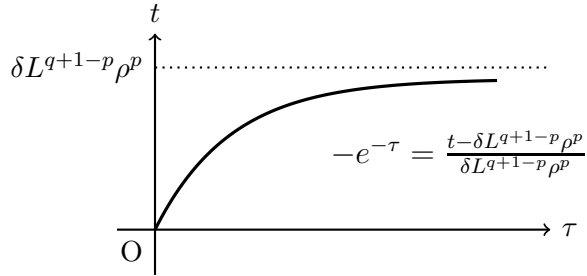

Note that this transformation $(x,t) \mapsto (y,\tau)$ maps $Q_{16\rho}(0)$ to $B_{16}\times (0,\infty)$. For a nonnegative weak solution $u$ to~\eqref{pST} we set 
\begin{equation}\label{def. of stretching v}
v(y,\tau):=\frac{1}{L}u(x,t)e^{\frac{\tau}{q+1-p}}
\end{equation}
and thus, by simple calculation, we find that $v$ satisfies the following equation in the weak sense:
\begin{equation}\label{eq1. of stretching v}
\partial_{\tau}v^{q}=\delta \Delta_{p}v+ \left(c\delta L^{q+1-p}\rho^{p}e^{-\tau}+\frac{q}{q+1-p} \right)v^{q}.
\end{equation}
Using~\eqref{def. of stretching v}, we write the conclusion~\eqref{ex. propconc} in Proposition~\ref{ex. prop} as 
\begin{equation}\label{eq2. of stretching v}
\big|B_{1} \cap \{ v(\tau) \geq \varepsilon e^{\frac{\tau}{q+1-p}}\}
 \big| \geq \frac{1}{2}\alpha |B_{1}|
\end{equation}
for every $\tau \in (0,\infty)$. Letting 
\[
k_{0}:=\varepsilon e^{\frac{\tau_{0}}{q+1-p}}, \qquad k_{j}:=\frac{k_{0}}{2^{j}}, \quad j=0,1,\ldots
\]
with the parameter $\tau_{0}>0$ determined later, 
%and $J \in \mathbb{N}$ is a natural number satisfying $J \geq \left(\frac{C(n,p)}{\nu \alpha \delta^{\frac{1}{p}}} \right)^{\frac{p}{p-1}}$ for every $\nu \in (0,1)$, 
%
we obtain from~\eqref{eq2. of stretching v} that 
\begin{equation}\label{eq3. of stretching v}
\big|B_{8} \cap \{v(\tau_{0}) \geq k_{j}\} \big| \geq \frac{\alpha}{2}8^{-n}|B_{8}|.
\end{equation}
We define the following two space-time cylinders:
\begin{align*}
Q&:=B_{8} \times (\tau_{0}+k_{0}^{q+1-p}, \tau_{0}+2k_{0}^{q+1-p}), \notag\\
Q^{\prime}&:=B_{16} \times (\tau_{0}, \tau_{0}+2k_{0}^{q+1-p}).
\end{align*}
From~\eqref{eq1. of stretching v} and similar calculation as \cite[Proposition 3.8]{Kuusi-Misawa-Nakamura}, we obtain the \emph{Caccioppoli type inequality} of $v$
\begin{align}\label{eq4. of stretching v}
\lefteqn{
\esssup_{\tau_{1}<\tau<\tau_{2}}\int_{K \times \{\tau\}}(k-v)_{+}^{q+1}\zeta^{p}\,dy+\int_{K_{\tau_1,\tau_2}}|\nabla (k-v)_{+}\zeta |^{p}dyd\tau
} \qquad &
\notag \\
&\leq \frac{C}{\delta}\int_{K \times \{\tau_{1}\}}k^{q-1}(k-v)_{+}^{2}\zeta^{p}\,dx+C\int_{K_{\tau_1,\tau_2}}(k-v)_{+}^{p}|\nabla \zeta|^{p}\,dyd\tau \notag \\
&\qquad +\frac{C}{\delta}\int_{K_{\tau_1,\tau_2}}k^{q-1}(k-v)_{+}^{2}|\zeta_{\tau}|\,dyd\tau,
\end{align}
where $k\geq 0$, $K_{\tau_1, t_2} = K \times (\tau_1, \tau_2)$ for a compact set $K \subset B_{16}$ and $\tau_{2} > \tau_{1} > 0$, and $\zeta$ is a smooth function such that $0 \leq \zeta \leq 1$ and $\zeta = 0$ outside $K_{\tau_1, \tau_2}$.
By the Caccioppoli type inequality~\eqref{eq4. of stretching v} and the very similar argument as the proof of \cite[Lemma 4.2]{Kuusi-Misawa-Nakamura}, for every $\nu>0$, there exists a natural number $J \geq \left(\frac{C(n,p)}{\nu \alpha \delta^{\frac{1}{p}}} \right)^{\frac{p}{p-1}}$ such that %
\begin{equation}\label{eq5. of stretching v}
\big|Q \cap \{v<k_{J}\} \big| <\nu |Q|.
\end{equation}
As mentioned in \cite[Remark 4.3]{Kuusi-Misawa-Nakamura} %\ref{choice of parameter}%
, we can choose $\varepsilon$ such that
\begin{equation}\label{choice of epsilon stretching v}
\varepsilon=\left(\frac{\delta}{2^{I_{1}}}\right)^{\frac{1}{q+1-p}}
\end{equation}
 for some large natural number $I_{1}$ depending only on $n,p$ and $\alpha$. We also choose $k_j$ as 
\begin{equation}\label{choice of k_j stretching v}
 k_j=\displaystyle \left(\frac{\delta e^{\tau_{0}}} {2^{I_{1}+j}} \right)^{\frac{1}{q+1-p}}\quad \textrm{for} \quad j=0,1,\ldots,J.
\end{equation}
Under such choice as above we note that $k_{0}^{q+1-p}/k_{J}^{q+1-p}=2^{J}$ is a positive integer. We divide $Q$ along time direction into parabolic cylinders of number $2^{J}$ with each time-length $k_{J}^{q+1-p}$, and set
\begin{equation*}
Q^{(\ell)}:=B_{8}\times \left(\tau_{0}+k_{0}^{q+1-p}+\ell k_{J}^{q+1-p},\,\,\tau_{0}+k_{0}^{q+1-p}+(\ell+1) k_{J}^{q+1-p}\right)
\end{equation*}
for $\ell=0,1,\ldots ,2^{J}-1$.
By~\eqref{eq5. of stretching v} there is a $Q^{(\ell)}$ such that
\begin{equation}\label{eq6. of stretching v}
\big|Q^{(\ell)} \cap \{v<k_{J}\} \big| <\nu |Q^{(\ell)}|.
\end{equation}
\begin{figure}[h]
\begin{center}
\begin{tikzpicture}[scale=1.6]
\draw[thin] (-1.0,0) -- (1.0,0);
\draw[thin] (-1.0,0.5) -- (1.0,0.5);
\draw (-1,-0.5) rectangle (1.0,1.0);
\draw[thick, densely dotted] (-1.5,1.0) -- (-1.0,1.0);
\draw[densely dotted] (-1.5,0.5) -- (-1.0,0.5);
\draw[densely dotted] (-1.5,0) -- (-1.0,0);
\draw[thick, densely dotted] (-1.5,-0.5) -- (-1.0,-0.5);
\draw[->] (-1.5,-1.0) -- (1.5, -1.0)node[right]{\footnotesize $y \in \mathbb{R}^{n}$};
\draw[->] (-1.50,-1.4)-- (-1.5,1.5)node[right]{\footnotesize $\tau$};
\draw[thick, <->] (1.5,0)-- (1.5,0.5);
\draw (2.8, 0.2)node{\footnotesize height: $k_{J}^{q+1-p}$};
\filldraw [pattern=north east lines, pattern color=blue,] (-0.99,0.01) rectangle (0.99,0.49);
\draw  (0.4,0.2)node[left]{\footnotesize $Q^{(\ell)}$};
\draw  (-1.5,1.0)node[left]{\footnotesize $\tau_{0}+2k_{0}^{q+1-p}$};
\draw  (-1.5,0.5)node[left]{\footnotesize $\tau_{0}+k_{0}^{q+1-p}+(\ell+1) k_{J}^{q+1-p}$};
\draw  (-1.5,0)node[left]{\footnotesize $\tau_{0}+k_{0}^{q+1-p}+\ell k_{J}^{q+1-p}$};
\draw  (-1.5,-0.5)node[left]{\footnotesize $\tau_{0}+k_{0}^{q+1-p}$};
\draw  (0,-0.5)node[below]{\footnotesize $B_{8}$};
%\draw (0,-1.3)node[below] {\footnotesize  Figure: Image of $Q^{(\ell)}$};
\end{tikzpicture}
\end{center}
\caption{Image of $Q^{(\ell)}$}
\end{figure}
As a result, we obtain the expansion of interior positivity of time-stretched solution $v$ as follows.

\begin{prop}\label{expansion of positivity of stretching v} \normalfont 
Let $v$ be a nonnegative weak solution of~\eqref{eq1. of stretching v} in $B_{16} \times (0,\infty)$ defined by~\eqref{def. of stretching v}. Then 
\[
v \geq \frac{k_{J}}{2} \quad \textrm{a.e.}\,\,\textrm{in}\,\,B_{4}\times \left(\tau_{0}+k_{0}^{q+1-p}+\ell k_{J}^{q+1-p},\,\,\tau_{0}+k_{0}^{q+1-p}+(\ell+1) k_{J}^{q+1-p}\right).
\]
\end{prop}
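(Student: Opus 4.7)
The plan is a De Giorgi–type iteration based on the Caccioppoli inequality~\eqref{eq4. of stretching v}, upgrading the measure-theoretic smallness~\eqref{eq6. of stretching v} into a pointwise lower bound on the interior subcylinder. I set decreasing truncation levels $h_j := \tfrac{k_J}{2}(1 + 2^{-j})$, so that $h_0 = k_J$ and $h_j \searrow k_J/2$, and shrinking spatial balls $B^{(j)}$ of radius $r_j := 4 + 4 \cdot 2^{-j}$, so that $B^{(0)} = B_8$ and $B^{(j)} \searrow B_4$. Writing $I^{(\ell)}$ for the time interval of $Q^{(\ell)}$, I take $Q_j := B^{(j)} \times I_j^{(\ell)}$, where $I_j^{(\ell)}$ is a slight lower-enlargement of $I^{(\ell)}$ contained in the lower half of $Q$; this adjustment allows the spatio-temporal cutoff to vanish on the lower time face while keeping the measure estimate~\eqref{eq5. of stretching v} (or a slight variant) applicable. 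Define $Y_j := |Q_j \cap \{v < h_j\}|/|Q_j|$.

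Next, I apply~\eqref{eq4. of stretching v} with $k = h_j$ and a smooth cutoff $\zeta_j$ supported in $Q_j$, equal to one on $Q_{j+1}$, vanishing on the lower parabolic boundary of $Q_j$, satisfying $|\nabla \zeta_j| \leq C 2^j$ and $|\partial_\tau \zeta_j| \leq C\,2^{jp}\,k_J^{-(q+1-p)}$. These scalings are chosen so that the $\zeta_\tau$-term is intrinsically balanced with the $|\nabla \zeta|^p$-term, thanks to the time length $k_J^{q+1-p}$ of $Q^{(\ell)}$. The pointwise bound $(h_j - v)_+ \leq k_J$ then allows the right-hand side of the Caccioppoli estimate to be absorbed into $|Q_j \cap \{v < h_j\}|$. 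Coupling this energy control with the parabolic Sobolev embedding and the elementary Chebyshev-type inequality
\begin{equation*}
(h_j - h_{j+1})^{p\,\frac{n+q+1}{n}}\, |Q_{j+1} \cap \{v < h_{j+1}\}|
\leq \int_{Q_{j+1}} (h_j - v)_+^{p\,\frac{n+q+1}{n}}\, dy\, d\tau,
\end{equation*}
together with $h_j - h_{j+1} = k_J/2^{j+2}$, I obtain the standard recursion
\begin{equation*}
Y_{j+1} \leq C\, b^{\,j}\, Y_j^{\,1 + \kappa}
\end{equation*}
for some constants $C,\,b > 1$ and $\kappa > 0$ depending only on $n,\,p,\,\delta$.

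Finally, the classical fast geometric convergence lemma yields $Y_j \to 0$ provided $Y_0 \leq C^{-1/\kappa} b^{-1/\kappa^2}$, which gives $v \geq k_J/2$ almost everywhere in the limit cylinder $B_4 \times I^{(\ell)}$. The required smallness of $Y_0$ is arranged by picking $\nu$ in Lemma~\ref{ex. crucial lemma} small enough relative to these universal constants; this in turn fixes the natural number $J$ through~\eqref{eq5. of stretching v} and consequently $k_J$ via~\eqref{choice of k_j stretching v}. The main obstacle will be the bookkeeping of intrinsic scalings: the level $k_J$ governs simultaneously the time length of $Q^{(\ell)}$ and the sublevel threshold, and these must conspire so that the recursion constants are universal rather than deteriorating with $k_J$. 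A secondary but benign issue is the $1/\delta$ factor in~\eqref{eq4. of stretching v}; since $\delta$ has already been fixed in Proposition~\ref{ex. prop} depending only on $n,\,p,\,\alpha$, this merely tightens the smallness requirement on $\nu$ without obstructing the iteration.
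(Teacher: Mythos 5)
Your plan is correct and coincides with the paper's intended argument: the paper proves this proposition by a one-line reference to the De Giorgi iteration of \cite[Theorem 4.4]{Kuusi-Misawa-Nakamura}, which is exactly the scheme you describe (shrinking balls $B_8\searrow B_4$, levels $k_J\searrow k_J/2$, the Caccioppoli inequality~\eqref{eq4. of stretching v} with intrinsically scaled cutoffs, parabolic Sobolev embedding, and fast geometric convergence seeded by~\eqref{eq6. of stretching v} with $\nu$ chosen small). Your observation that the powers of $k_J$ cancel because the time length of $Q^{(\ell)}$ is $k_J^{q+1-p}$ is precisely the point that makes the recursion constants universal, and your flagged adjustment of the time interval (a lower enlargement, handled by a pigeonhole variant of~\eqref{eq5. of stretching v} over consecutive subcylinders) is the standard way to obtain the bound on the full interval of $Q^{(\ell)}$.
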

\begin{proof}
The proof is done by the same argument as \cite[Theorem 4.4]{Kuusi-Misawa-Nakamura}.
\end{proof}
Under preliminaries as above, we are in position to state the main theorem in this subsection.
\begin{thm}[Expansion of interior positivity up to end time]\label{Expansion of positivity until end time} 
Let $u$ be a nonnegative weak solution to~\eqref{pST}. Let $B_{16 \rho} (x_0) \times (t_0, t_0 + \delta L^{q + 1 - p} \rho^p) \subset \Omega_T$ with $x_0 \in \Omega$, $\rho > 0$, and $t_0 \in (0, T]$. Suppose~\eqref{ex. propassumption}. Then there exist positive numbers $\eta<1$ and $\sigma<1$ depending only on $n,p, \alpha$ and independent of $L$ such that 
\begin{equation}
u \geq \eta L \quad \textrm{a.e.}\,\,\textrm{in}\,\,B_{2\rho}(x_{0})\times \left(t_{0}+(1-\sigma) \delta L^{q+1-p}\rho^{p}, t_{0}+\delta L^{q+1-p}\rho^{p} \right),
\end{equation}
where the $\delta=\delta(n,p,\alpha)>0$ is selected in Proposition~\ref{ex. prop}.
\end{thm}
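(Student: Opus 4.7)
The strategy is to work in the time-stretched variables $(y,\tau)$ from~\eqref{def. of transformation}, where the endpoint $t = t_0 + \delta L^{q+1-p}\rho^p$ is mapped to $\tau = +\infty$, so that a final $t$-slice of the form $((1-\sigma)\delta L^{q+1-p}\rho^p,\delta L^{q+1-p}\rho^p)$ corresponds to a half-line $\tau \in (\tau_*,\infty)$ with $\sigma = e^{-\tau_*}$. By translation I may assume $(x_0,t_0)=(0,0)$. First, Proposition~\ref{ex. prop} propagates the measure-density hypothesis~\eqref{ex. propassumption} uniformly to every $t \in (0,\delta L^{q+1-p}\rho^p)$; under the transformation~\eqref{def. of transformation}--\eqref{def. of stretching v} the function $v$ satisfies~\eqref{eq1. of stretching v} on $B_{16}\times(0,\infty)$ and inherits the measure density~\eqref{eq2. of stretching v} for every $\tau>0$. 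Since $u = Lve^{-\tau/(q+1-p)}$, the desired conclusion reduces to proving that $v(\cdot,\tau)\geq \tilde{\eta}\, e^{\tau/(q+1-p)}$ on $B_4$ for all $\tau\in(\tau_*,\infty)$ with $\tilde{\eta},\tau_*$ depending only on $n,p,\alpha$.

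To produce such a positivity tail, I would iterate Proposition~\ref{expansion of positivity of stretching v}. A first application at $\tau_0=0$ yields $v\geq k_J/2$ on $B_4\times Q^{(\ell_0)}$ for some index $\ell_0$. At any $\tau_1\in Q^{(\ell_0)}$ the slice $v(\cdot,\tau_1)$ is pointwise positive on $B_4$, so reapplying Proposition~\ref{expansion of positivity of stretching v} with $\tau_0=\tau_1$ is legitimate and produces a further subinterval $Q^{(\ell_1)}$ of positivity. Iterating generates a sequence $\{\tau_0^{(k)}\}$ and a chain $\{Q^{(\ell_k)}\}$; short gaps between consecutive intervals can be bridged by Proposition~\ref{key cor}, which converts instantaneous pointwise positivity into positivity on a small forward $\tau$-slab. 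In this way $B_4\times(\tau_*,\infty)$ gets covered by positivity intervals for $v$, and un-stretching then delivers the claim on $B_{2\rho}\times((1-\sigma)\delta L^{q+1-p}\rho^p,\delta L^{q+1-p}\rho^p)$ as above.

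The crux, and main obstacle, is to ensure that the positivity constant $\eta$ for $u$ stays uniform across the iteration. In the stretched variable the level $k_J^{(k)}/2$ produced at step $k$ scales like $e^{\tau_0^{(k)}/(q+1-p)}$, matching exactly the growth of the measure-density level $\varepsilon e^{\tau/(q+1-p)}$ in~\eqref{eq2. of stretching v}, and therefore is intended to cancel against the $e^{-\tau/(q+1-p)}$ factor in the un-stretching $u = Lve^{-\tau/(q+1-p)}$. However, $Q^{(\ell_k)}$ sits up to distance $k_0^{q+1-p} \sim e^{\tau_0^{(k)}}$ beyond $\tau_0^{(k)}$ on the $\tau$-axis, so the exponential cancellation $e^{(\tau_0^{(k)}-\tau)/(q+1-p)}$ is only clean when $\tau-\tau_0^{(k)}$ stays uniformly bounded. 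Achieving this requires careful bookkeeping: one should advance $\tau_0^{(k)}$ by $O(1)$ increments, relying on Proposition~\ref{key cor} in preference to the full length of a single $Q^{(\ell_k)}$, so that $\tau - \tau_0^{(k)}$ is controlled throughout. Once such a sequence is constructed, the resulting lower bound $u \geq \eta L$ holds with $\eta$ and $\sigma = e^{-\tau_*}$ in $(0,1)$ depending only on $n,p,\alpha$, completing the proof.
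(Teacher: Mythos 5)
Your setup — the stretched variables, the reduction to a lower bound for $v$ of the form $v(\cdot,\tau)\geq\tilde\eta e^{\tau/(q+1-p)}$ on $B_4$ for $\tau>\tau_*$, and the identification $\sigma=e^{-\tau_*}$ — is correct, and you have correctly located where the difficulty lies. But the iterative scheme you propose does not close, and the difficulty you name is in fact fatal to it as formulated. Proposition~\ref{expansion of positivity of stretching v} produces positivity only on a single uncontrolled subinterval $Q^{(\ell)}$ sitting somewhere inside $(\tau_0+k_0^{q+1-p},\,\tau_0+2k_0^{q+1-p})$, whose left endpoint is already a distance $k_0^{q+1-p}=\varepsilon^{q+1-p}e^{\tau_0}$ beyond $\tau_0$. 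With the forced choice~\eqref{choice of epsilon stretching v} this distance is proportional to $e^{\tau_0}$, so the increment $\tau_0^{(k+1)}-\tau_0^{(k)}$ cannot be made $O(1)$: the waiting time in stretched variables grows exponentially in $\tau_0$, and that is a structural feature of the equation, not bookkeeping one can tighten. Bridging with Proposition~\ref{key cor} doesn't rescue this, because its forward slab is produced by the same De Giorgi machinery and carries the same intrinsic scaling in $\tau$. So neither building block advances $\tau_0$ by a bounded amount, and there is no visible way to extract a uniform $\tilde\eta$ from an infinite chain of such steps.

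The paper avoids the iteration entirely. It applies the stretched proposition once, then immediately scales back to the physical variable at the time $t_1$ coming from $\tau_1\in Q^{(\ell)}$, obtaining pointwise positivity $u(t_1)\geq L_0$ on the full ball $B_{4\rho}$ with $L_0\sim e^{-(\tau_1-\tau_0)/(q+1-p)}L$; it then runs the physical-variable De Giorgi machinery (Proposition~\ref{ex. prop}, Lemma~\ref{ex. crucial lemma}, and the fast geometric convergence iteration) to get $u\gtrsim L_0$ on the entire finite interval $(t_1,\,t_1+\tilde\delta L_0^{q+1-p}\rho^p)$. The key observation you miss is that $\tau_0$ is still a free parameter after all of this, and it is chosen a posteriori so that $t_1+\tilde\delta L_0^{q+1-p}\rho^p=t_0+\delta L^{q+1-p}\rho^p$ — one algebraic equation, solved in~\eqref{eq.6 of pf of expansion of positivity until end time} to give $e^{\tau_0}=2^{I_1+J+q+1-p}/\tilde\delta$, hence $\tau_0$ depending only on $n,p,\alpha$. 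The bound $\tau_1\leq\tau_0+2k_0^{q+1-p}\leq\tau_0+2e^{\tau_0}$ then controls $t_1$ from above, producing $\sigma=e^{-(\tau_0+2e^{\tau_0})}$ and $\eta=\tfrac{\tilde\varepsilon}{4}(\delta e^{-2\tau_0}/2^{I+J})^{1/(q+1-p)}$ in a single pass. So the right way to think of the stretching transformation here is not as a new universe to iterate in, but as a one-time device that places a pointwise positivity slice at a good physical time, after which one returns to physical variables and lets the adjustable $\tau_0$ make the finite physical positivity interval reach the terminal time.
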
 
\begin{proof}
The assertion is verified by Proposition~\ref{expansion of positivity of stretching v}, the scaling back $(x,t) \leftrightarrow (y,\tau)$ and De Giorgi's iteration method as in \cite[Theorem 4.4]{Kuusi-Misawa-Nakamura}. Again, by translation, we may consider $(x_{0},t_{0})=(0,0)$. It follows from Proposition~\ref{expansion of positivity of stretching v} and~\eqref{choice of k_j stretching v} that for almost every time $\tau_{1}$, $\tau_{0}+k_{0}^{q+1-p}+\ell k_{J}^{q+1-p}<\tau_{1}<\tau_{0}+k_{0}^{q+1-p}+(\ell+1) k_{J}^{q+1-p}$
\[
v(y,\tau_{1}) \geq \frac{1}{2}\left(\frac{\delta e^{\tau_{0}}}{2^{I_{1}+J}} \right)^{\frac{1}{q+1-p}} \quad \textrm{a.e.}\,\,\textrm{in}\,\,B_{4},
\]
which,  letting $t_1$ by~\eqref{def. of transformation} with $\tau_{1}$ as
\begin{equation}\label{t1}
-e^{-\tau_{1}}=\frac{t_{1}-\delta L^{q+1-p}\rho^{p}}{\delta L^{q+1-p}\rho^{p}},
\end{equation}
%the 2nd formula with $\tau = \tau_{1}$ and $t = t_{1}$,~\eqref{ex. propconc}%
%by Eq.~\eqref{def. of stretching v}, leads to
%
leads to 
\begin{equation}\label{eq.1 of pf of expansion of positivity until end time}
u(x,t_{1}) \geq \frac{1}{2} \left(\frac{\delta e^{-(\tau_{1}-\tau_{0})}}{2^{I_{1}+J}} \right)^{\frac{1}{q+1-p}}L \quad \textrm{a.e.}\,\,\textrm{in}\,\,B_{4\rho}.
\end{equation}
For brevity, we set 
\begin{equation}\label{L0}
L_{0}:=\frac{1}{2} \left(\frac{\delta e^{-(\tau_{1}-\tau_{0})}}{2^{I_{1}+J}} \right)^{\frac{1}{q+1-p}}L.
\end{equation}
Since it follows from~\eqref{eq.1 of pf of expansion of positivity until end time} that $\big|B_{4\rho} \cap \{u(t_{1}) \geq L_{0}\big|=|B_{4\rho}|$, by Proposition~\ref{ex. prop}, there exist positive numbers $\tilde{\delta}<1$ and $\tilde{\varepsilon}<1$ depending only on $n$ and $p$ and independent of $L_{0}$ such that 
\begin{equation}\label{eq.2 of pf of expansion of positivity until end time}
\big|B_{4\rho} \cap \{u(t) \geq \tilde{\varepsilon}L_{0}\}\big| \geq \frac{1}{2}\big|B_{4\rho}\big|
\end{equation}
holds for every $t \in [t_{1}, t_{1}+\tilde{\delta}L_{0}^{q+1-p}\rho^{p}]$. For a positive $\theta \leq \tilde{\delta}L_{0}^{q+1-p}$ let $Q^{\theta}_{4\rho}:=B_{4\rho} \times (t_{1}, t_{1}+\theta \rho^{p})$. By Lemma~\ref{ex. crucial lemma}, for every $\tilde{\nu} \in (0,1)$ there exists a positive $\tilde{\varepsilon}_{\tilde{\nu}}$ such that 
\begin{equation}\label{eq.3 of pf of expansion of positivity until end time}
\big|Q^{\theta}_{4\rho} \cap \{u<\tilde{\varepsilon}_{\tilde{\nu}}L_{0} \big| <\tilde{\nu}|Q^{\theta}_{4\rho} \big|.
\end{equation}
As in the proof of \cite[Theorem 4.4]{Kuusi-Misawa-Nakamura}, let
\begin{align*}
&\rho_{m}:=\left(2+\frac{1}{2^{m-1}} \right)\rho, \quad \quad Q_{m}:=B_{\rho_{m}}\times (t_{1}, t_{1}+\theta \rho^{p}), \notag\\ &\theta:=\tilde{\delta} L_{0}^{q+1-p}, \quad \quad \kappa_{m}:=\left(\frac{1}{2}+\frac{1}{2^{m+1}} \right)\tilde{k}_{I_{2}}.
\end{align*}
%and
%
%\begin{equation*}
%\theta=\tilde{\delta}\tilde{k}_{I_{2}}^{q+1-p}<\tilde{\delta} L_{0}^{q+1-p},
%\end{equation*}
%
where $\displaystyle \tilde{k}_{I_{2}}:= \frac{\tilde{\varepsilon}_{\tilde{\nu}}L_{0}}{2^{\frac{I_{2}}{q+1-p}}}$ with $\tilde{\varepsilon}_{\tilde{\nu}}$ in~\eqref{eq.3 of pf of expansion of positivity until end time} and a natural number $I_{2}$ satisfying $I_{2} \geq \left(\frac{C(n,p)}{\tilde{\nu}\tilde{\delta}^{\frac{1}{p}}} \right)^{\frac{p}{p-1}}$. 
%
%Put
%
%\begin{equation*}
%\kappa_{m}:=\left(\frac{1}{2}+\frac{1}{2^{m+1}} \right)\tilde{k}_{I_{2}}.
%\end{equation*}
%
It then plainly holds that
\begin{equation*}
\begin{cases}
4\rho =\rho_{0}\geq \rho_m \searrow \rho_{\infty}=2\rho \\[2mm]
Q^{\theta}_{4\rho}=Q_0
\supset Q_m \searrow Q_{\infty}=B_{2\rho} \times (t_{1}, t_{1}+\theta\rho^{p}) \\[2mm]
\tilde{k}_{I_{2}}=\kappa_{0} \geq \kappa_m \searrow \kappa_{\infty}=\tilde{k}_{I_{2}}/2.
\end{cases}
\end{equation*}
Letting $\displaystyle Y_m:=\dashint_{Q_m} \chi_{\{(\kappa_m-u)_+>0\}}\,dz$ and using the Caccioppoli type inequality~\eqref{local energy ineq} with the cut-off function $\zeta=\zeta (x)$ in $Q_{m}$  as the proof of \cite[Corollary 4.6]{Kuusi-Misawa-Nakamura}, we obtain
\begin{equation*}
 Y_{m+1} \leq Cb^m Y_m^{1+\frac{p}{n}},\quad m=0,1,\ldots,
 \end{equation*}
 where $b:=2^{p(1+\frac{p}{n})+p\frac{n+q+1}{n}}>1$.
 By the fast geometric convergence lemma (see \cite[Lemma 2.3]{Kuusi-Misawa-Nakamura} and also \cite[Lemma 4.1, p12]{DiBenedetto1}), if
 \begin{equation}\label{condition2 on Y_{0} end time}
 Y_0 \leq C^{-\frac{n}{p}}b^{-(\frac{n}{p})^2}=:\tilde{\nu}_0,
 \end{equation}
 then
 \begin{equation}\label{limit2 Y_{n} end time}
 Y_m \to 0\quad \textrm{as}\quad  m \to \infty.
 \end{equation}
 Eq.~\eqref{condition2 on Y_{0} end time} follows from~\eqref{eq.3 of pf of expansion of positivity until end time}
 with $\tilde{\nu}=\tilde{\nu_0}$ and thus,~\eqref{limit2 Y_{n} end time} gives that
\[
u \geq \frac{\tilde{k}_{I_{2}}}{2} \quad \,\,\textrm{in}\,\,B_{2\rho} \times (t_{1}, t_{1}+\theta \rho^{p}),
\]
that is, 
\begin{equation}\label{eq.4 of pf of expansion of positivity until end time}
u \geq \frac{\tilde{\varepsilon}}{4} \left(\frac{\delta e^{-(\tau_{1}-\tau_{0})}}{2^{I+J}} \right)^{\frac{1}{q+1-p}}L\quad \,\,\textrm{in}\,\,B_{2\rho}\times (t_{1},t_{1}+\tilde{\delta}L_{0}^{q+1-p}\rho^{p}),
\end{equation}
where we put $I:=I_{1}+I_{2} \in \mathbb{N}$, which depends only on $n,p,\alpha, \varepsilon$ and $\delta$ and independent of $L$ and $\tilde{\varepsilon}=\tilde{\varepsilon}_{\tilde{\nu}_{0}}$. Here we note that 
\[
{\tilde \delta} L_{0}^{q+1-p}= {\tilde \delta}\frac{\delta e^{- (\tau_{1}-\tau_{0})}}{2^{I_{1}+J}}\left(\frac{L}{2}\right)^{q+1-p}.
\]
%
%
%
%\[
%\tilde{\delta}\tilde{k}_{I_{2}}^{q+1-p}=\tilde{\delta}\left(\frac{\tilde{\varepsilon}L_{0}}{2^{\frac{I_{2}}{q+1-p}}} \right)^{q+1-p}=\tilde{\delta}\tilde{\varepsilon}^{q+1-p} \left(\frac{\delta e^{-(\tau_{1}-\tau_{0})}}{2^{I+J}} \right)\frac{L^{q+1-p}}{2^{q+1-p}}.
%\]
 %
Thus, the inequality~\eqref{eq.4 of pf of expansion of positivity until end time} is written as 
\begin{equation}\label{eq.5 of pf of expansion of positivity until end time}
u(x,t) \geq \frac{\tilde{\varepsilon}}{4} \left(\frac{\delta e^{-(\tau_{1}-\tau_{0})}}{2^{I+J}} \right)^{\frac{1}{q+1-p}}L\quad \,\,\textrm{in}\,\,B_{2\rho}
\end{equation}
for all times $t_{1} <t< t_{1}+\tilde \delta\frac{\delta e^{-(\tau_{1}-\tau_{0})}}{2^{I_{1} + J}}
\left(\frac{L}{2}\right)^{q+1-p}\rho^p$.
The transformed $\tau_{0}$ is still free of choice, and it will be selected as follows. By the change of variables~\eqref{def. of transformation} with~\eqref{t1}, $\tau_{0}$ is chosen as
\begin{align}\label{eq.6 of pf of expansion of positivity until end time}
&\delta L^{q+1-p}\rho^{p}-t_{1}=\tilde \delta \frac{\delta e^{-(\tau_{1}-\tau_{0})}}{2^{I_{1}+J}}\left(\frac{L}{2}\right)^{q+1-p}\rho^p \notag\\[1mm]
&\!\!\!\!\iff \tau_{0}=\log \left(\frac{2^{I_{1}+J+q+1-p}}{\tilde{\delta}} \right).
\end{align}
This $\tau_{0}$ depends only on $n,p, \varepsilon, \delta$ and $\alpha$ because $\tilde{\delta}$ and $\tilde{\varepsilon}$ depend only on $n, p$. Therefore,~\eqref{eq.5 of pf of expansion of positivity until end time} holds for all times 
\begin{equation}\label{eq.7 of pf of expansion of positivity until end time}
t_{1}=\delta L^{q+1-p}\rho^{p}-\tilde \delta \frac{\delta e^{-(\tau_{1}-\tau_{0})}}{2^{I_{1}+J}}\left(\frac{L}{2}\right)^{q+1-p}\rho^p<t \leq \delta L^{q+1-p}\rho^{p}.
\end{equation}
Lastly, we will estimate the above ''left edge time'' $t_{1}$. Since 
\begin{align}\label{eq.8 of pf of expansion of positivity until end time}
\tau_{1}\leq \tau_{0}+2k_{0}^{q+1-p} \iff e^{-\tau_{1}} \geq e^{-(\tau_{0}+2k_{0}^{q+1-p})}, \notag\\
k_{0}^{q+1-p}=\varepsilon^{q+1-p}e^{\tau_{0}}\leq e^{\tau_{0}} \iff e^{-2k_{0}^{q+1-p}}\geq e^{-2e^{\tau_{0}}},
\end{align}
it follows from~\eqref{eq.6 of pf of expansion of positivity until end time} and~\eqref{eq.8 of pf of expansion of positivity until end time} that 
\begin{align*}
t_{1}&=\delta L^{q+1-p}\rho^{p}-\tilde \delta \frac{\delta e^{-(\tau_{1}-\tau_{0})}}{2^{I_{1}+J}}\left(\frac{L}{2}\right)^{q+1-p}\rho^p\notag\\
&=(1-e^{-\tau_{1}})\delta L^{q+1-p}\rho^{p}\notag\\[1mm]
&\leq \left(1-e^{-(\tau_{0}+2k_{0}^{q+1-p})}\right)\delta L^{q+1-p}\rho^{p} \notag\\
&\leq \left(1-e^{-(\tau_{0}+2e^{\tau_{0}})}\right)\delta L^{q+1-p}\rho^{p}. 
\end{align*}
Here we set $\sigma:=e^{-(\tau_{0}+2e^{\tau_{0}})} \in (0,1)$. This together with~\eqref{eq.5 of pf of expansion of positivity until end time}
%and~\eqref{eq.8 of pf of expansion of positivity until end time}%
implies that
\[
u \geq \frac{\tilde{\varepsilon}}{4} \left(\frac{\delta e^{-2\tau_{0}}}{2^{I+J}} \right)^{\frac{1}{q+1-p}}L\quad \,\,\textrm{in}\,\,B_{2\rho}\times ((1-\sigma)\delta L^{q+1-p},\,\delta L^{q+1-p}\rho^{p})
\]
and thus, letting $\eta:=\frac{\tilde{\varepsilon}}{4} \left(\frac{\delta e^{-2\tau_{0}}}{2^{I+J}} \right)^{\frac{1}{q+1-p}}$, we complete the proof.
\end{proof}

%%%%%%%%%%%%%%%%%%%%%%%
%%%%%%%%%%%%%%%%%%%%%%%
%%%%%%%%%%%%%%%%%%%%%%%
%%%%%%%%%%%%%%%%%%%%%%%

\subsection{Proof of Theorem~\ref{ex. thm prime} and Proposition~\ref{ex. cor prime}}\label{two proofs}
%%%%%%%%
%%%%%%%%%%%
%%%%%%%%%%%%
This subsection is devoted to the detailed proof of Theorem~\ref{ex. thm prime} and Proposition~\ref{ex. cor prime}.
\medskip

We shall prove Theorem~\ref{ex. thm prime} by using Theorem~\ref{Expansion of positivity until end time} and a method of chain of finitely many balls as used in Harnack's inequality for harmonic functions, which is so-called \textit{Harnack chain} (see \cite[Theorem 11, pp.32--33]{Evans} and~\cite{AKN,KMN} in the $p$-parabolic setting). Here we use the special choice of parameters, as explained in Theorem~\ref{Expansion of positivity until end time} (see also \cite[Theorem 4.4]{Kuusi-Misawa-Nakamura}).

\bigskip

\noindent

\begin{proof}[\normalfont \textbf{Proof of Theorem~\ref{ex. thm prime}}]
We follow a similar argument as \cite[Section 4.3]{Kuusi-Misawa-Nakamura}.
\medskip

We will prove the assertion in four steps.

\medskip

\textbf{Step 1}: \,Since $\overline{\Omega^\prime}$ is compact, it is covered by finitely many balls $\{B_{\rho}(x_j)\}_{j=1}^N\,\,(x_j \in \Omega^\prime,\,j=1,2,\ldots,N)$ with $N=N(\Omega^\prime, \rho)$, such that
\begin{equation*}
\Omega^\prime \subset \bigcup_{j=1}^N B_{\rho}(x_j), \quad  \rho <|x_i- x_{i+1}|<2\rho,\, \,\,B_{16\rho}(x_{i}) \subset \Omega,\,\,\textrm{for all}\,\,  1 \leq i \leq N,
\end{equation*}
where we put $x_{N+1}=x_1$.
For brevity we denote $B_{\rho}(x_j)$ by $B_j$ for each $j=1,2,\ldots, N$.
\bigskip

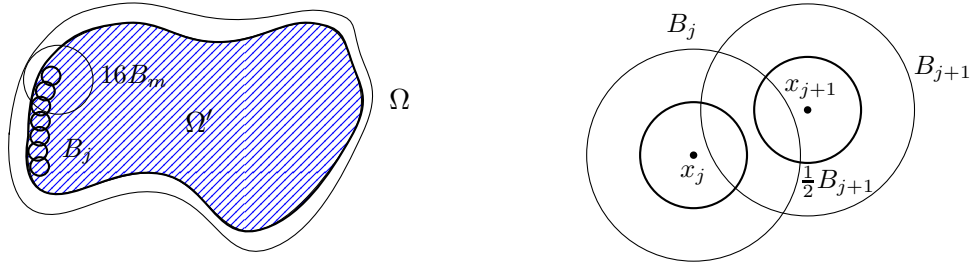
\begin{figure}[h]
\begin{center}
\begin{tikzpicture}[scale=0.50]
            \draw  plot[draw=black, smooth, tension=.8] coordinates {(-4.0,0.55) (-3.3,2.8) (-1.2,4.0) (1.8,3.6) (4.4,3.8) (5.2,2.8) (5.3,0.6) (2.8,-2.5) (0,-1.0) (-3.3,-1.5) (-4.0,0.6)};
            \filldraw[pattern=north east lines, pattern color=blue, thick]  plot[smooth, tension=.8] coordinates {(-3.5,0.5) (-3,2.5) (-1,3.5) (1.5,3) (4,3.5) (5,2.5) (5,0.5) (2.5,-2) (0,-0.5) (-3,-1.0) (-3.5,0.5)};
            \draw plot[smooth, tension=.7] coordinates {(-3.5,0.5) (-3,2.5) (-1,3.5) (1.5,3) (4,3.5) (5,2.5) (5,0.5) (2.5,-2) (0,-0.5) (-3,-1.0) (-3.5,0.5)};
           \draw (1,1.5)node[below]{\small $\Omega'$};
           \draw (5.7,1.5)node[right]{\small $\Omega$};
           \draw (-1.9,2.1)node[right]{\footnotesize $16B_{m}$};
           \draw[thin] (-2.71,2.0) circle (26pt);
            \draw (-1.5,0.1)node[left]{\footnotesize $B_{j}$};
            \draw[thick] (-3.2,-0.3) circle (7pt);
            \draw[thick] (-3.25,0.1) circle (7pt);
            \draw[thick] (-3.2,0.5) circle (7pt);
            \draw[thick] (-3.18,0.9) circle (7pt);
            \draw[thick] (-3.17,1.3) circle (7pt);
             \draw[thick] (-3.05,1.7) circle (7pt);
             \draw[thick] (-2.9,2.1) circle (7pt);
             \draw[thin] (14,0) circle (80pt);
            \draw[thick] (14,0) circle (40pt);
            \draw[thick] (17,1.2) circle (40pt);
            \draw[thin] (17,1.2) circle (80pt);
            \draw (13,3.4)node[right]{\footnotesize $B_j$};
            \draw (16.5,-0.7)node[right]{\footnotesize $\frac{1}{2}B_{j+1}$};
            \draw (19.5,2.3)node[right]{\footnotesize $B_{j+1}$};
            \draw (14,0)node[below]{\footnotesize $x_j$};
            \draw (17.1,1.2)node[above]{\footnotesize $x_{j+1}$};
            \filldraw[thick] (14,0) circle (2pt);
            \filldraw[thick] (17,1.2) circle (2pt);
            %\draw (0.2,-2.8)node[below] {\footnotesize  Figure:  Harnack's chain argument};
            %\draw (16.2,-2.8)node[below] {\footnotesize  Figure:  Relations of two balls};
 \end{tikzpicture}
\end{center}
\caption{Harnack's chain argument}
\end{figure}
By~\eqref{ex. propassumption prime}, there exists at least one $B_j=B_\rho (x_j)$, denoted by  $x_1=x_j$ and $B_1=B_j$, such that
\begin{equation*}
|B_1 \cap \{u(t_0) \geq L\}| \geq \frac{\alpha}{2^n} |B_1|.
\end{equation*}
Thus, by Theorem~\ref{Expansion of  positivity until end time}, there exists positive numbers $\delta_0, \varepsilon_0, \sigma_{0}, \eta_{1} \in (0,1)$ depending only on $p, n$ and $\alpha_{0}=\alpha$ and independent of $L$ such that
\begin{equation}\label{eq.1 of revised ex. prime}
u \geq \eta_{1}L\quad \textrm{a.e.}\,\,\textrm{in} \quad B_{1}\times \big( t_{0}+(1-\sigma_{0})\delta_{0}L^{q+1-p}\rho^{p},\,t_{0}+\delta_{0}L^{q+1-p}\rho^{p}\big), 
\end{equation}
where $\sigma_0:=e^{-(\tau_0+2e^{\tau_0})}$, $e^{\tau_0} = C(n,p)2^{I_0+J_0}$ for some $I_{0},\,J_{0} \in \mathbb{N}$ depending only on $n,p$ and $\alpha_{0}$, and $\displaystyle \eta_{1}=\frac{\tilde{\varepsilon}_{0}}{4} \left(\frac{\delta_{0}e^{-2\tau_{0}}}{2^{I_{0}+J_{0}}}\right)^{\frac{1}{q+1-p}}$ for some $\tilde{\varepsilon}_{0}=\tilde{\varepsilon}_{0}(n,p) \in (0,1)$. We put $\eta_0=1$ for later reference.
\medskip

\textbf{Step 2}: \,  By $\rho<|x_1-x_2|<2\rho$,
\begin{equation*}
D_1:=B_1 \cap B_2 \neq \varnothing.
\end{equation*}
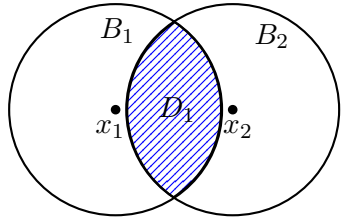
\begin{figure}[h]
\begin{center}
\begin{tikzpicture}[scale=0.7]
\filldraw[pattern=north east lines, pattern color=blue,thick] (0.9,-1.65) arc(-56:56:2);
\filldraw[pattern=north east lines, pattern color=blue,thick] (0.9,1.65) arc(124:236:2);
\draw[thick](-0.2,0) circle(2) (2,0) circle (2);
 \draw (0.4,0)node[right]{\small $D_1$};
\draw (-0.7,1.5)node[right]{\small $B_1$};
  \draw (2.2,1.4)node[right]{\small $B_2$};
   \draw (-0.3,0)node[below]{\small $x_1$};
  \draw (2.1,0)node[below]{\small $x_2$};
    \filldraw[thick] (-0.2,0) circle (2pt);
  \filldraw[thick] (2,0) circle (2pt);
  %\draw (0.8,-2.8)node[below] {\footnotesize  Figure:  Intersection of two balls};
\end{tikzpicture}
\end{center}
\caption{Intersection of two balls}
\end{figure}
\bigskip

Via~\eqref{eq.1 of revised ex. prime}, we have
\begin{equation}\label{eq.2 of revised ex. prime}
u \geq \eta_1L \quad \textrm{a.e.}\,\,D_1 \times \mathcal{I}_0,
\end{equation}
where let $\mathcal{I}_0:=\big(t_{0}+(1-\sigma_{0})\delta_{0}L^{q+1-p}\rho^{p},\,t_{0}+\delta_{0}L^{q+1-p}\rho^{p}\big)$. By~\eqref{eq.2 of revised ex. prime}, for any $t_1 \in \mathcal{I}_0$, 
\begin{equation*}
|D_1 \cap \{u(t_1) \geq \eta_1 L\}|=|D_1|,
\end{equation*}
which is, setting $\displaystyle \alpha_1:=\frac{|D_1|}{|B_2|} \in (0,1)$,
\begin{equation*}
\big|B_2 \cap \{u(t_1) \geq \eta_1 L\}\big| \geq \alpha_1 \big|B_2\big|.
\end{equation*}
By the very same argument as Step 1, there exist positive numbers $\delta_{1}, \sigma_{1},\eta_{2}\in (0,1)$ depending only on $p,\,n$ and $\alpha_1$ and independent of $L$ such that
\begin{equation}\label{eq.3 of revised ex. prime}
u \geq \eta_{2}L\quad \textrm{a.e.}\,\,\textrm{in} \quad B_{2}\times \big( t_{1}+(1-\sigma_{1})\delta_{1}(\eta_{1}L)^{q+1-p}\rho^{p},\,t_{1}+\delta_{1}(\eta_{1}L)^{q+1-p}\rho^{p}\big), 
\end{equation}
where $\sigma_1:=e^{-(\tau_1+2e^{\tau_1})}$, $e^{\tau_1} = C(n,p)2^{I_1+J_1}$ for some large $I_{1},\,J_{1} \in \mathbb{N}$ depending only on $n,p$ and $\alpha_{1}$, and $\displaystyle \eta_{2}=\frac{\tilde{\varepsilon}_{1}}{4} \left(\frac{\delta_{1}e^{-2\tau_{1}}}{2^{I_{1}+J_{1}}}\right)^{\frac{1}{q+1-p}}\eta_{1}$ for some $\tilde{\varepsilon}_{1}=\tilde{\varepsilon}_{1}(n,p) \in (0,1)$. Here we choose $t_1 \in \mathcal{I}_{0}$ as 
\begin{align*}
&t_1:=t_0+(\delta_{0}-\delta_{1}\eta_1^{q+1-p})L^{q+1-p}\rho^p > 0 \notag\\[2mm]
&\!\!\!\!\iff t_{1}+\delta_{1}(\eta_1L)^{q+1-p}\rho^{p}=t_{0}+\delta_{0}L^{q+1-p}\rho^{p}.
\end{align*}
Note that this choice of $t_{1} \in \mathcal{I}_{0}$ is admissible by
\begin{align*}
&t_{1}-\left(t_{0}+(1-\sigma_{0})\delta_{0}L^{q+1-p}\rho^{p}\right)\notag\\
&=\left(\sigma_0-\delta_{1}\left(\frac{\tilde{\varepsilon_{0}}}{4}\right)^{q+1-p}\frac{e^{-2\tau_{0}}}{2^{I_{0}+J_{0}}}\right)\delta_{0}L^{q+1-p}\rho^{p}> 0
\end{align*}
and $\delta_1$ can be chosen as small.
\smallskip

\textbf{Step 3}:\, We will proceed by induction on $m$. Suppose that for some $m \in \{1,2.\ldots ,N\}$
\begin{equation}\label{eq.4 of revised ex. prime}
u \geq \eta_mL \quad \textrm{a.e.}\,\,\textrm{in}\,\,B_m\times \mathcal{I}_{m-1}.
\end{equation}
Here let
\begin{equation*}
\mathcal{I}_{m-1}:=\big( t_{m-1}+(1-\sigma_{m-1})\delta_{m-1}(\eta_{m-1}L)^{q+1-p}\rho^{p},\,t_{m-1}+\delta_{m-1}(\eta_{m-1}L)^{q+1-p}\rho^{p}\big)
\end{equation*}
with $t_{m-1}:=t_{0}+(\delta_{0}-\delta_{m-1}\eta_{m-1}^{q+1-p})L^{q+1-p}\rho^{p}$,
where $\delta_{m-1}, \sigma_{m-1} \in (0,1), \tau_{m-1}>0$ are determined inductively as follows: $\sigma_{m-1}:=e^{-(\tau_{m-1}+2e^{\tau_{m-1}})}$, $e^{\tau_{m-1}} = C(n,p)2^{I_{m-1}+J_{m-1}}$ for some large $I_{m-1},\,J_{m-1} \in \mathbb{N}$ depending only on $n, p$ and $\alpha_{m-1} = \tfrac{|D_{m-1}|}{|B_{m}|}=\tfrac{|D_{1}|}{|B_{2}|}=\alpha_{1}$ as before, and $\displaystyle \eta_{m}=\frac{\tilde{\varepsilon}_{m-1}}{4} \left(\frac{\delta_{m-1}e^{-2\tau_{m-1}}}{2^{I_{m-1}+J_{m-1}}}\right)^{\frac{1}{q+1-p}}\eta_{m-1}$ for some $\tilde{\varepsilon}_{m-1}=\tilde{\varepsilon}_{m-1}(n,p) \in (0,1)$.

By $\rho <|x_m-x_{m+1}|<2\rho$ again,
\begin{equation*}
D_m:=B_m \cap B_{m+1}\neq \varnothing
\end{equation*}
and thus,~\eqref{eq.4 of revised ex. prime} yields that
\begin{equation}\label{eq.5 of revised ex. prime}
u \geq \eta_mL \quad \textrm{a.e.}\,\,\textrm{in}\,\,D_m \times \mathcal{I}_{m-1}.
\end{equation}
By~\eqref{eq.5 of revised ex. prime}, for any $t_m \in \mathcal{I}_{m-1}$,
\begin{equation*}
\big|B_{m+1} \cap \{u(t_m) \geq \eta_mL\}\big| \geq \alpha_m\big|B_{m+1}\big|,
\end{equation*}
where let $\displaystyle \alpha_m:=\frac{|D_m|}{|B_{m+1}|}=\frac{|D_{1}|}{|B_{2}|}=\alpha_{1} \in (0,1)$. Again, similarly as in Steps 1 and 2, 
there exist positive numbers $\delta_{m}, \sigma_{m},\eta_{m+1}\in (0,1)$ depending only on $p,\,n$ and $\alpha_1$ and independent of $L$ such that
\begin{equation}\label{eq.6 of revised ex. prime}
u \geq \eta_{m+1}L\quad \textrm{a.e.}\,\,\textrm{in} \quad B_{m+1}\times \big( t_{m}+(1-\sigma_{m})(\delta_{m}L)^{q+1-p}\rho^{p},\,t_{m}+(\delta_{m}L)^{q+1-p}\rho^{p}\big), 
\end{equation}
where $\sigma_m:=e^{-(\tau_{m}+2e^{\tau_{m}})}$, $e^{\tau_m} = C(n,p)2^{I_m+J_m}$ for some large $I_{m},\,J_{m} \in \mathbb{N}$ depending only on $n,p$ and $\alpha_{1}$, and $\displaystyle \eta_{m+1}=\frac{\tilde{\varepsilon}_{m}}{4} \left(\frac{\delta_{m}e^{-2\tau_{m}}}{2^{I_{m}+J_{m}}}\right)^{\frac{1}{q+1-p}}\eta_{m}$ for some $\tilde{\varepsilon}_{m}=\tilde{\varepsilon}_{m}(n,p) \in (0,1)$. Again, we choose $t_m \in \mathcal{I}_{m-1}$ as 
\begin{align*}
&t_m:=t_0+(\delta_{0}-\delta_{m}\eta_m^{q+1-p})L^{q+1-p}\rho^p > 0 \notag\\[2mm]
&\!\!\!\!\iff t_{m}+\delta_{m}(\eta_{m}L)^{q+1-p}\rho^{p}=t_{0}+\delta_{0}L^{q+1-p}\rho^{p},
\end{align*}
because
\begin{align*}
&t_{m}-\left(t_{m-1}+(1-\sigma_{m-1})\delta_{m-1}(\eta_{m-1}L)^{q+1-p}\rho^{p}\right)\notag\\
&=\left(\sigma_{m-1}-\delta_{m}\left(\frac{\tilde{\varepsilon}_{m-1}}{4}\right)^{q+1-p}\frac{e^{-2\tau_{m-1}}}{2^{I_{m-1}+J_{m-1}}}\right)\delta_{m-1}(\eta_{m-1}L)^{q+1-p}\rho^{p}> 0
\end{align*}
and $\delta_m$ can be taken to be small enough. Thus our induction on $m$ is done.
\smallskip

\textbf{Step 4}:\, By Step 3, we have, for all $m=1,2,\ldots,N$,
\begin{equation}\label{eq.7 of revised ex. prime}
u \geq \eta_{m+1}L \quad \textrm{a.e.}\,\,\textrm{in}\,\,B_{m+1}\times \mathcal{I}_{m},
\end{equation}
where let $B_{N+1}:=B_1$. Since, by construction, $\{\eta_m\}_{m=1}^{N+1}$ is decreasing, it follows from~\eqref{eq.7 of revised ex. prime} that, for all $m=1,2,\ldots,N$,
\begin{equation*}
u \geq \eta_{N+1}L \quad \textrm{a.e.}\,\,\textrm{in}\,\,B_{m+1}\times \mathcal{I}_{N},
\end{equation*}
where we set 
\[
\mathcal{I}_{N}:=\big( t_{N}+(1-\sigma_{N})(\delta_{N}L)^{q+1-p}\rho^{p},\,t_{N}+(\delta_{N}L)^{q+1-p}\rho^{p}\big)
\]
with $t_{N}=t_{0}+(\delta_{0}-\delta_{N}\eta_{N}^{q+1-p})L^{q+1-p}\rho^{p}$.
Therefore we complete the proof. 
\end{proof}
Lastly, we will prove Proposition~\ref{ex. cor prime}.

\begin{proof}[\normalfont \textbf{Proof of Proposition~\ref{ex. cor prime}}]
Since $\overline{\Omega^\prime}$ is compact, it is covered by finitely many balls $\{B_{\rho}(x_j)\}_{j=1}^N\,\,(x_j \in \Omega^\prime,\,j=1,2,\ldots,N)$, where $N=N(\Omega^\prime, \rho)$, such that
\begin{equation*}
\Omega^\prime \subset \bigcup_{j=1}^N B_{\rho}(x_j), \quad  \rho <|x_i- x_{i+1}|<2\rho,\, \,\,B_{16\rho}(x_{i}) \subset \Omega,\,\,\textrm{for all}\,\,  1 \leq i \leq N,
\end{equation*}
where we put $x_{N+1}=x_1$.
For brevity we denote $B_{\rho}(x_j)$ by $B_j$ for each $j=1,2,\ldots, N$ and let $4B_{j}:=B_{4\rho}(x_{j})$. By assumption, $u(t_{0})>0$ in each ball $4B_{j}$, $j=1,\ldots, N$.  Let $L_{1} = \inf_{4B_{1}} u (t_{0})$. By Corollary~\ref{key cor} with $L=L_{1}$ (see the proof of \cite[Corollary 4.6]{Kuusi-Misawa-Nakamura}) there exist positive numbers $\eta_{1}$ and $\tau_{1}$ depending on $n$ and $p$ such that
\begin{equation*}
u \geq \eta_{1}L_{1}\quad \textrm{a.e.}\,\,\textrm{in}\,\,B_{1}\times (t_{0},t_{0}+\tau_{1} L_{1}^{q +1-p} \rho^p).
\end{equation*}
Letting $L_{2}:=\inf_{4B_{2}} u (t_{0})$ and applying Corollary~\ref{key cor} with $L=L_{2}$, there exists positive numbers $\eta_{2}$ and $\tau_{2}$ depending on $n$ and $p$
 such that
\begin{equation*}
u \geq \eta_{2} L_{2}\quad \textrm{a.e.}\,\,\textrm{in}\,\,B_{2}\times (t_{0},t_{0}+\tau_{2}L_{2}^{q+1-p} \rho^p).
\end{equation*}
Repeating this argument finitely, there exist positive numbers $\eta_{N}$ and $\tau_{N}$ depending on $n$ and $p$ such that, letting $L_{N}= \inf_{4 B_{N}} u (t_{0})$,
\begin{equation*}
u \geq \eta_{N} L_{N}\quad \textrm{a.e.}\,\,\textrm{in}\,\,B_{j}\times (t_{0},t_{0}+\tau_{N} L_{N}^{q+1-p} \rho^p).
\end{equation*}
for all $j=1,\ldots, N$. 

Finally, we define the subdomain
of $\Omega$ as $\Omega^{\prime\prime}:=\{x \in \Omega :
\mathrm{dist}(x, \Omega^\prime) < 4 \rho \}$,
and put $L :=\inf_{\Omega^{\prime\prime}} u (t_{0}) > 0$.
Then $L \leq L_{i}$ for  $i=1, \ldots, N$. Thus, putting
$\eta_{0}:= \min \limits_{i=1, \ldots, N}\eta_i L$
and
$\tau_{0}:=\min \limits_{i =1, \ldots, N}\tau_{i} L^{q+1-p}\rho^p$, we complete the proof.

\end{proof}

%%%%%%%%%%%%%%%%%%

%%%%%%%%%%%%%%%%%

\subsection{Proof of Proposition~\ref{Interior positivity by the volume constraint}}\label{another proof}

We will prove Proposition~\ref{Interior positivity by the volume constraint} here.

\begin{proof}[\normalfont \textbf{Proof of Proposition~\ref{Interior positivity by the volume constraint}}]
Following \cite[Proposition 5.4]{Kuusi-Misawa-Nakamura} with a minor change, we give the proof. 

Note that a nonnegative weak solution $u$ of~\eqref{pS} is a weak supersolution to~\eqref{pST} with $c=0$. 
By the volume constraint together with the boundedness, 
%
%with Proposition~\ref{Boundedness of the p-Sobolev flow}%
letting $M:=e^{\frac{1}{q}\int_0^T\|\nabla u(t)\|_p^p\,dt}\|u_0\|_{\infty}$, we have, for any $t \in [0,T]$
\begin{align}
1=\int_\Omega u^{q+1}(t)\,dx&=\int_{\Omega' \cap \{u(t) \geq L\} }u^{q+1}(t)\,dx+\int_{\Omega' \cap \{u(t) < L\} } u^{q+1}(t)\,dx+\int_{\Omega\,\backslash \Omega'} u^{q+1}(t)\,dx \notag \\
&\leq M^{q+1}\big|\Omega' \cap \{u(t) \geq L\} \big|+L^{q+1}|\Omega'|+M^{q+1}|\Omega\,\backslash \Omega'|\,;\, \notag
\end{align}
i.e.,
\[
\frac{1-L^{q+1}|\Omega'|-M^{q+1}|\Omega\,\backslash \Omega'|}{M^{q+1}} \leq \big|\Omega' \cap \{u(t) \geq L\} \big|.
\]
%
%Choose $\Omega^{\prime}$ such that $|\Omega\,\backslash \Omega'| \leq \frac{1}{4M^{q+1}}$ and $L>0$ satisfying $\displaystyle L \leq \left\{\frac{1}{(4|\Omega^{\prime}|)^{\frac{1}{q+1}}}, \inf_{\Omega^{\prime}}u_{0} \right\}$. %
Under the choice of $\Omega^\prime$ and $L$ in Proposition~\ref{Interior positivity by the volume constraint}, we find that, for any $t \in [0,T]$,
\begin{equation}\label{positivity with volume constraint eq1}
\alpha|\Omega'| \leq \big|\Omega' \cap \{u(t) \geq L\} \big|,
\end{equation}
where $\displaystyle \alpha:=\frac{1}{2 M^{q+1}|\Omega'|}$. 
Let $\rho>0$ be arbitrarily taken and fixed, satisfying $\rho \leq \mathrm{dist}(\Omega^\prime, \partial \Omega)/16$. We choose subdomain $\Omega^{\prime\prime}$ as $\Omega^{\prime\prime} =\left\{x \in \Omega : \mathrm{dist}(x, \Omega^\prime) < 4\rho\right\}$.
By Theorem~\ref{ex. thm prime}, there are positive integer $N=N(\Omega^\prime, \rho)$, positive real number families $\delta_{0}, \delta_{N},\,\eta_{N}, \eta_{N+1}, \sigma_{N} \in (0,1),\,J_{N},\,I_{N}\in \mathbb{N}$ depending on $\alpha, N, n, p$ and independent of $L$ and a time $t_N>t_0$ such that
\begin{equation*}
u \geq \eta_{N+1} L \quad \textrm{a.e.}\,\,\textrm{in}\quad \Omega^{\prime}\times \mathcal{I}_{N}(t),
\end{equation*}
where
\begin{equation*} % \label{e.}
\mathcal{I}_{N}(t):=\left(t_{N}+ (1-\sigma_N) \delta_N (\eta_N L)^{q+1-p} \rho^p,\,t_{N}+\delta_N (\eta_N L)^{q+1-p} \rho^p \right), 
\end{equation*}
with $\sigma_N=e^{-(\tau_N + 2e^{\tau_N})}$, $e^{\tau_N} = C(n,p)2^{I_N+J_N}$ and $t_{N}$ given by
\begin{equation*}
t_N=t+(\delta_0-\delta_N\eta_N^{q+1-p})L^{q+1-p}\rho^p.
\end{equation*}
Notice that the terminal time of $\mathcal{I}_{N}(t)$ is $t+\delta_{0}L^{q+1-p}\rho^p$.
Meanwhile, it follows from $u_0>0$ in $\Omega$ and Proposition~\ref{ex. cor prime} with $t_{0}=0$ that, there exist positive number $\eta$ and $\tau$ depending only on $N, n$ and $p$ such that
\[
u \geq \eta L \quad \textrm{a.e.}\,\,\textrm{in}\quad \Omega^\prime \times (0,\tau L^{q+1-p} \rho^p).
\]
Furthermore, if $t$ is very close to $T$, then we can choose $\delta_0 > 0$ so small that
$t+\delta_0 L^{q+1-p}\rho^p = T$. Since $t \in [0, T]$ is arbitrary, choosing $\bar{\eta}:=\min\{\eta_{N+1}, \eta\}$, we have that
\[
u(x,t)\geq \bar{\eta}L  \quad \textrm{a.e.}\,\,\textrm{in}\quad \Omega^{\prime } \times [0,T],
\]
which is our assertion of Proposition~\ref{Interior positivity by the volume constraint}.
\end{proof}

%%%%%%%%%%%%%%%%%
%%%%%%%%%%%%%%%%%
%%%%%%%%%%%%%%%
\section{Convergence result}\label{Sec. Convergence result}

We will recall the fundamental convergence result, used in Appendix~\ref{Sec. rigorous argument}. A weak convergent sequence in $L^{r},\,1<r<\infty$, satisfying the norm convergence is strong convergent in $L^r$, that was originally proved by Clarkson (\cite{Clarkson}) and Hanner (\cite{Hanner}).
\begin{lem}\label{strong conv. lemma}
Let $r \in (1,\infty)$. Let $\{f_{j}\}_{j=1}^{\infty}$ be a sequence in $L^{r}(\Omega)$. Assume that $f_{j} \to f$ weakly in $L^{r}(\Omega)$ and $\|f_{j}\|_{r} \to \|f\|_{r}$. Then we have $f_{j} \to f$ strongly in $L^{r}(\Omega)$.
 \end{lem}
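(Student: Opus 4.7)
The plan is to exploit the uniform convexity of $L^r(\Omega)$ for $1<r<\infty$ via Clarkson's inequalities, which is the classical route to this statement (it is the Radon--Riesz, or Kadec--Klee, property of $L^r$). The strategy splits into the easier case $r\geq 2$ and the more delicate case $1<r<2$, in each of which one combines a Clarkson-type inequality with weak lower semicontinuity of the norm.

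First I would dispose of $r\geq 2$. Applying Clarkson's first inequality to the pair $f_j,f$ gives
\[
\left\|\frac{f_j+f}{2}\right\|_r^r + \left\|\frac{f_j-f}{2}\right\|_r^r \leq \frac12\bigl(\|f_j\|_r^r + \|f\|_r^r\bigr).
\]
The hypothesis $\|f_j\|_r\to \|f\|_r$ makes the right-hand side tend to $\|f\|_r^r$. Since $\frac{f_j+f}{2}\rightharpoonup f$ weakly in $L^r(\Omega)$, the weak lower semicontinuity of the norm yields $\liminf_{j\to\infty}\|\frac{f_j+f}{2}\|_r \geq \|f\|_r$. Passing to $\limsup$ in the inequality above therefore forces $\limsup_{j\to\infty}\|\frac{f_j-f}{2}\|_r^r \leq 0$, i.e., $f_j\to f$ strongly in $L^r(\Omega)$.

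For $1<r<2$ I would replace Clarkson's first inequality by Hanner's inequality (equivalently Clarkson's second inequality), which with $r'=r/(r-1)$ takes a form such as
\[
\left\|\frac{f_j+f}{2}\right\|_r^{r'} + \left\|\frac{f_j-f}{2}\right\|_r^{r'} \leq \left(\frac12\|f_j\|_r^r + \frac12\|f\|_r^r\right)^{r'-1},
\]
and then run the identical weak lower semicontinuity argument on the left-hand side to conclude $\|f_j-f\|_r\to 0$.

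The only genuinely delicate point is invoking the correct Clarkson/Hanner-type inequality in the subquadratic range $1<r<2$, where the natural duality exponent $r'$ intervenes; the weak lower semicontinuity step that closes the argument is identical in both ranges and entails no serious difficulty. An alternative, equally clean route would be to appeal abstractly to the Radon--Riesz property for uniformly convex Banach spaces and cite the uniform convexity of $L^r(\Omega)$ for $1<r<\infty$, bypassing the case distinction entirely.
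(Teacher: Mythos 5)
Your argument is correct and follows exactly the route the paper itself indicates: the paper simply states that the lemma ``is a consequence of the uniform convexity of $L^p$-space'' and omits the details, which are precisely the Clarkson/Hanner inequalities combined with weak lower semicontinuity of the norm that you supply. In effect you have written out the standard Radon--Riesz argument that the authors chose to leave implicit, so there is nothing to correct.
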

\begin{proof}
It is a consequence of the uniform convexity of $L^p$-space. We omit the details of proof.
\end{proof}
%%%%%%%%%%%%%%%%%%%%%%%%%%%%%%%%
%%%%%%%%%%%%%%%%%%%%%%%%%%%%%%%%%
%%%%%%%%%%%%%%%%%%%%%%%%%%%%%%%%
%%%%%%%%%%%%%%%%%%%%%%%%%%%%%%%%%
\section{Proof of Proposition~\ref{Nonlinear intrinsic scaling}}\label{Sec. rigorous argument}

In this appendix, we shall prove Proposition~\ref{Nonlinear intrinsic scaling}. From now on, we will show that the function $u$ defined by~\eqref{def. of u} satisfies the conditions (D1)--(D4) in Definition~\ref{def of weak sol.}. Firstly, we introduce the mollifier, which is used later. 
For a function $f \in L^{1}(\mathbb{R}^{n+1})$, we denote the \textit{mollifier} of $f$ by
\[
f_{\varepsilon}(z):=(f \ast \rho_{\varepsilon})(z)=\int_{\mathbb{R}^{n+1}}f(w)\rho_{\varepsilon}(z-w)\,dw.
\]
Here, $\varepsilon>0$ and $z=(x,t), w=(y,s) \in \mathbb{R}^{n+1}$ are space-time points, and let $\displaystyle \rho_{\varepsilon}(z):=\frac{1}{\varepsilon^{n+1}}\rho\left(\frac{z}{\varepsilon} \right)$, where $\rho(z)$ is a smooth symmetric function in the following sense:
\[
\rho(x,t)=\rho(|x|,|t|) \quad \textrm{for}\quad z=(x,t) \in \mathbb{R}^{n+1}
\]
and satisfies 
\[
\int_{\mathbb{R}^{n+1}}\rho(z)\,dz=1,\quad \mathrm{supp} \,\rho \subset \{(x,t): |x|<1, |t|<1\}, \quad \rho \geq 0.
\]
In what follows, let $v$ be a weak solution of~\eqref{bpS} with the initial data $v_0=u_0$, obtained from Theorem~\ref{our theorem}. 
We extend $v$ as $v=u_0$ for $t \leq 0$, and $v=0$ outside $\Omega$ and then the extended function is also written by the same notation. Note that this extension is Lipschitz extension of $v$. Let us denote the mollification of $v$ by $v_\varepsilon$.

\begin{lem}\label{crucial uniform convergence}
We have the following uniform convergences:
As $\varepsilon \searrow 0$,

\begin{equation}\label{convergence of v1}
v_{\varepsilon} \to v \quad \textrm{locally uniformly in} \quad C([0,\infty)\,;\,L^{q+1}(\Omega))
\end{equation}
and
\begin{equation}\label{convergence of v2}
(v^q)_{\varepsilon} \to v^q \quad \textrm{locally uniformly in} \quad C([0,\infty)\,;\,L^{\frac{q+1}{q}}(\Omega)).
\end{equation}
\end{lem}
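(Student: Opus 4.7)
The strategy is to reduce both mollification convergences to the classical fact that a function which is continuous into an $L^{r}$-space on a compact time interval can be uniformly approximated by its space--time mollifications in that norm. Thus the core task is to verify
\[
v \in C([0,\infty); L^{q+1}(\Omega)), \qquad v^q \in C([0,\infty); L^{(q+1)/q}(\Omega)).
\]

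For the second of these, Definition~\ref{def of v}(i) gives $\partial_s(v^q) \in L^2(\Omega_T)$, hence $v^q \in C([0,T]; L^2(\Omega))$ on every finite time interval. Since $q \geq 1$ under the standing assumption $p \in [2,n)$, the embedding $L^2(\Omega) \hookrightarrow L^{(q+1)/q}(\Omega)$ on the bounded domain $\Omega$ upgrades this to $v^q \in C([0,T]; L^{(q+1)/q}(\Omega))$. For $v$ itself, I would combine the a priori bound $0 \leq v \leq \|v_0\|_\infty$ from~\eqref{MP} with the elementary concavity inequality $|a^{1/q} - b^{1/q}| \leq |a-b|^{1/q}$ (valid for $a, b \geq 0$, $q \geq 1$), which yields
\[
\|v(s_2) - v(s_1)\|_{L^{q+1}(\Omega)}^{q+1} \leq \|v^q(s_2) - v^q(s_1)\|_{L^{(q+1)/q}(\Omega)}^{(q+1)/q},
\]
transporting continuity from $v^q$ to $v$. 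Continuity at the endpoint $s = 0$ is handled by Definition~\ref{def of v}(iii) together with Sobolev's embedding $W^{1,p}_0(\Omega) \hookrightarrow L^{q+1}(\Omega)$, and the corresponding endpoint statement for $v^q$ follows from the mean value estimate $|a^q - b^q| \leq q\|v_0\|_\infty^{q-1}|a-b|$ on $[0, \|v_0\|_\infty]$.

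With these continuity statements in hand, I would extend $v$ to a function on $\mathbb{R}^{n+1}$ by setting $v(\cdot, s) = u_0$ for $s \leq 0$ and $v = 0$ outside $\Omega$; the extension preserves continuity both through $s = 0$ and through $\partial \Omega$. Fixing $T < \infty$, the image $\{v(\cdot, t) : t \in [-1, T+1]\}$ is compact in $L^{q+1}(\mathbb{R}^n)$, hence an equicontinuity argument for translations on this compact family, combined with Minkowski's integral inequality applied to the representation
\[
v_\varepsilon(x, t) - v(x, t) = \int_{\mathbb{R}^{n+1}} \bigl[v(x-y, t-s) - v(x, t)\bigr]\, \rho_\varepsilon(y, s)\, dy\, ds,
\]
yields $\sup_{t \in [0,T]} \|v_\varepsilon(t) - v(t)\|_{L^{q+1}(\Omega)} \to 0$, establishing~\eqref{convergence of v1}. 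Running the same argument with $v$ replaced by $v^q$ and the $L^{q+1}(\Omega)$-norm replaced by the $L^{(q+1)/q}(\Omega)$-norm gives~\eqref{convergence of v2}. I expect the only delicate point to be continuity across $s = 0$ after the constant-in-time extension; this is where the initial trace property of Definition~\ref{def of v}(iii) is essential.
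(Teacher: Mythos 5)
Your proof is correct, but it reaches the key continuity statements by a genuinely different route than the paper. The paper first shows that $s\mapsto\|v(s)\|_{q+1}$ is (locally absolutely) continuous via the energy equality~\eqref{energy eq1 of v}, and then upgrades weak continuity plus norm continuity to strong continuity in $L^{q+1}(\Omega)$ by invoking the uniform-convexity (Radon--Riesz) Lemma~\ref{strong conv. lemma} — indeed that lemma is recorded in Appendix~\ref{Sec. Convergence result} precisely for this purpose — and then concludes by the ``fundamental property of the mollifier,'' treating~\eqref{convergence of v2} as analogous. You instead start from $\partial_s(v^q)\in L^2(\Omega_S)$ in Definition~\ref{def of v}(i), which together with the boundedness~\eqref{MP} gives $v^q\in W^{1,2}((0,T);L^2(\Omega))\hookrightarrow C([0,T];L^2(\Omega))\hookrightarrow C([0,T];L^{(q+1)/q}(\Omega))$ (using $q\ge 1$ and $|\Omega|<\infty$), and you transfer this to $v$ via the subadditivity inequality $|a^{1/q}-b^{1/q}|\le|a-b|^{1/q}$, which is legitimate since $v\ge 0$ by~\eqref{MP}. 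Your route buys several things: it is self-contained (no appeal to Lemma~\ref{strong conv. lemma}, and no need for the weak time-continuity of $v(s)$ in $L^{q+1}$, which the paper's application of that lemma tacitly requires), it is quantitative (a H\"older-$1/2$ modulus in time for $v^q$ into $L^2$), and it establishes the continuity of $v^q$ directly rather than ``by the same argument.'' The paper's route, in exchange, ties the continuity to the energy identity, which is the mechanism reused elsewhere (e.g.\ in Lemma~\ref{composite, chain rule}). Your final mollification step — Minkowski's integral inequality plus equicontinuity of translations on the compact set $\{v(\cdot,t):t\in[-1,T+1]\}\subset L^{q+1}(\mathbb{R}^n)$ after the constant-in-time and zero-in-space extensions — is exactly the content the paper compresses into one sentence, and your treatment of the endpoint $s=0$ via Definition~\ref{def of v}(iii) and the Sobolev embedding $W^{1,p}_0(\Omega)\hookrightarrow L^{q+1}(\Omega)$ is the right way to make the extension continuous across $s=0$.
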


\begin{proof}
From the energy equality~\eqref{energy eq1 of v} and the continuity of $x^{\frac{1}{q+1}}$ for $x \ge 0$,  we find that $\|v(s)\|_{q+1}$ is locally continuous in $s \in [0, \infty)$, in fact, locally absolutely continuous on $[0,\infty)$. This together with Lemma~\ref{strong conv. lemma} and~\eqref{energy eq1 of v}
implies that $v \in C([0,\infty)\,;\,L^{q+1}(\Omega))$. Via the fundamental property of mollifier,~\eqref{convergence of v1} immediately follows. By the same argument as above,~\eqref{convergence of v2} readily follows.
\end{proof}
\medskip

Let $S^\ast$ be the extinction time of this solution $v$ of~\eqref{bpS+}. Define $\Lambda(\tau)$ by a $C^{1}$-solution to the following ODE
\begin{equation}\label{def. of Lambda'}
\begin{cases}
\displaystyle \frac{d}{d\tau}\Lambda(\tau)=\left( \displaystyle \int_{\Omega} v^{q+1}(x,\Lambda(\tau)) \, dx \right)^{\frac{p}{n}}\\[5mm]
\Lambda(0)=0.
\end{cases}
\end{equation}
Let $g(t)$ be a $C^1$-solution of the ODE on $[0,\infty)$
\begin{equation}\label{def. of g'}
\begin{cases}
g^\prime (t)=e^{\Lambda(g(t))}\\[2mm]
g (0)=0.
\end{cases}
\end{equation}
For unique $\Lambda \in C^1(0,\infty)$ solving~\eqref{def. of Lambda'} and, subsequently, $g \in C^1(0, \infty)$ solving~\eqref{def. of g'}, let 
\begin{equation*}
s(t)=S^\ast \left(1-e^{-\Lambda(g(t))}\right)
\end{equation*}
and set
\begin{equation}\label{def. of u'}
u(x,t) := \frac{v\left(x,s(t)\right)}{\gamma(t)}, \quad \gamma(t) := \left(\,\displaystyle \int_{\Omega} v^{q+1}\left(x,s(t)\right) \, dx \right)^{\frac{1}{q+1}}.
\end{equation}
The ODE~\eqref{def. of Lambda'} is actually solvable, since the integral of the right hand side of the ODE~\eqref{def. of Lambda} is (locally absolutely) continuous on $t$ in $[0, \infty)$, by the energy equality~\eqref{energy eq1 of v}. We further remark that the following relation holds:
\[
s\nearrow S^\ast \iff t \nearrow \infty.
\]
Let $t_0<\infty$ be any positive number and set $s_0:=S^\ast\left(1-e^{-\Lambda(g(t_0))}\right)$.
We now deduce the positivity of $\|v (s)\|_{q+1}$ for any nonnegative $s\leq s_0$. 
\begin{lem}\label{bounded from below lem}
There exists a positive number $c_{0}$ such that, for every nonnegative $s \leq s_{0}$,
\begin{equation}\label{c0}
\|v(s)\|_{q+1}\geq c_{0}.
\end{equation}  

\end{lem}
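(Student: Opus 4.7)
My plan is to read the lower bound off directly from two ingredients already at hand: the monotonicity of $s \mapsto \|v(s)\|_{q+1}$ coming from the energy equality \eqref{energy eq1 of v}, and the strict inequality $s_0 < S^{\ast}$ which is built into the construction of $s(t)$.

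First I would verify that $s_0 < S^{\ast}$. Since $\Lambda \in C^1[0,\infty)$ and $g \in C^1[0,\infty)$, both are finite at $t_0$, so $\Lambda(g(t_0))<\infty$ and hence $e^{-\Lambda(g(t_0))}>0$. The definition $s_0 = S^{\ast}(1-e^{-\Lambda(g(t_0))})$ then gives $s_0 < S^{\ast}$.

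Second, applying the energy equality \eqref{energy eq1 of v} to any pair $s \leq s_0$ yields
\[
\|v(s_0)\|_{q+1}^{q+1} + \frac{q+1}{q}\int_{s}^{s_0}\|\nabla v(\sigma)\|_p^p\,d\sigma = \|v(s)\|_{q+1}^{q+1},
\]
so $s \mapsto \|v(s)\|_{q+1}^{q+1}$ is non-increasing on $[0,\infty)$ and in particular
\[
\|v(s)\|_{q+1} \geq \|v(s_0)\|_{q+1} \quad \text{for every } s \in [0, s_0].
\]
It therefore suffices to show $\|v(s_0)\|_{q+1} > 0$, and I would then set $c_0 := \|v(s_0)\|_{q+1}$.

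Third, I would invoke Definition~\ref{def. of the  extinction time} together with the monotonicity above to rule out $\|v(s_0)\|_{q+1}=0$: were $v(\cdot, s_0)\equiv 0$, the monotonicity would force $v \equiv 0$ on $\Omega \times [s_0, \infty)$, which (combined with $s_0 < S^{\ast}$) contradicts property (i) of Definition~\ref{def. of the  extinction time}, because then $v$ would vanish identically past the strictly smaller time $s_0$, making $S^{\ast}$ not the extinction time of $v$. The only genuinely subtle point — and it is more of a bookkeeping issue than an obstacle — is this last implication, which requires one to align the definition of extinction time with the monotonicity of the $L^{q+1}$-norm; once that is in place the proof is immediate.
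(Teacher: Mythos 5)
Your proof is correct, and it reaches the same conclusion as the paper by a slightly more explicit route. The paper's own proof is a single sentence: $s\mapsto\|v(s)\|_{q+1}$ is continuous and positive on the compact interval $[0,s_0]$, hence its minimum $c_0$ is positive. You replace the continuity-plus-compactness step by the monotonicity of $\|v(s)\|_{q+1}^{q+1}$ coming from the energy equality \eqref{energy eq1 of v}, which locates the minimum at the endpoint $s_0$, and you then justify the positivity there via $s_0<S^\ast$ and the definition of the extinction time --- a justification the paper leaves implicit. This buys you a proof that does not need continuity of the norm at all, and it makes visible the one fact both arguments secretly rely on, namely that the norm cannot vanish strictly before $S^\ast$.

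One small point of bookkeeping: the contradiction you derive is not literally with property (i) of Definition~\ref{def. of the  extinction time} as stated, since (i) only asserts that $v$ is not identically zero on the whole cylinder $\Omega\times(0,S^\ast)$, and that would still hold if $v$ were nonzero on $(0,s_0)$ but vanished on $[s_0,\infty)$. What you are really using is that $S^\ast$ is \emph{the} (i.e., the minimal) time after which $v$ vanishes; if $v\equiv 0$ on $[s_0,\infty)$ with $s_0<S^\ast$, then $s_0$ would already satisfy (i)--(ii), contradicting minimality. You flag this yourself, and it is indeed only a matter of aligning the wording of the definition with its intended meaning, so the argument stands.
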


\begin{proof}
Since $\|v(s)\|_{q+1}$ is continuous and positive on $[0,s_{0}]$ there exists a positive number $c_{0}$  satisfying $c_{0}:=\min \limits_{0 \leq s \leq s_{0}}\|v(s)\|_{q+1}>0$, yielding~\eqref{c0}. The proof is done.
\end{proof}

\medskip

We have to verify the regularity of the scaled solution $u$, defined by~\eqref{def. of Lambda} and~\eqref{def. of u'}.
\begin{lem}[The regularity of a composite function and its chain rule]\label{composite, chain rule}
 Let $u$ be defined by~\eqref{def. of u'}. Then there holds that 
 \begin{equation}
 u \in C ([0, \infty); L^{q +1} (\Omega))
 ,\quad \nabla u \in L^\infty_{\mathrm{loc}}([0, \infty); L^p (\Omega)).
 \end{equation}
Furthermore, the function $\gamma(t)$ is Lipschitz on $[0, t_0]$ and the weak derivative on time $\partial_t v^q (x, S^\ast\left(1-e^{-\Lambda(g(t))}\right))$ is in $L^2 (\Omega_\infty)$.
In addition, there exists $\partial_t u^q \in L^2 (\Omega_{t_0})$ for any $t_0<\infty$ such that 
\begin{equation}\label{weak sense eq.}
\partial_t u^q=\partial_t v^q \cdot \gamma^{-q}-qv^q\gamma^{-q-1}\gamma^\prime(t)
\end{equation}
in a weak sense.
\end{lem}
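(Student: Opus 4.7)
The plan is to exploit the $C^1$-regularity of $t \mapsto s(t)$ with derivative $s_t(t) = \gamma(t)^{(q+1)p/n}$ (from~\eqref{nis eq.0}), together with the two-sided bounds $c_0 \leq \gamma(t) \leq \|u_0\|_{q+1} = 1$ on any fixed interval $[0, t_0]$, supplied respectively by Lemma~\ref{bounded from below lem} and~\eqref{energy ineq1 of v}. The upper bound yields $s_t \leq 1$ globally in $t$, while the lower bound gives $s_t \geq c_0^{(q+1)p/n} > 0$ and makes $\gamma^{-q}$ Lipschitz on $[0, t_0]$; both facts are indispensable for the chain-rule computations.

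The continuity $u \in C([0, \infty); L^{q+1}(\Omega))$ follows from $v \in C([0, \infty); L^{q+1}(\Omega))$, which was shown in the proof of Lemma~\ref{crucial uniform convergence}, together with continuity of $s(t)$ and strict positivity of $\gamma(t)$. The bound $\nabla u \in L^\infty_{\mathrm{loc}}([0, \infty); L^p(\Omega))$ is exactly~\eqref{lambda bounds}: combine~\eqref{energy ineq2 of v} with $\gamma \geq c_0$ on $[0, t_0]$. For the Lipschitz continuity of $\gamma$, the energy equality~\eqref{energy eq1 of v} gives $\frac{d}{ds}\|v(s)\|_{q+1}^{q+1} = -\frac{q+1}{q}\|\nabla v(s)\|_p^p$, and~\eqref{energy ineq2 of v} bounds the right-hand side uniformly by $\frac{q+1}{q}\|\nabla u_0\|_p^p$. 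Hence $s \mapsto \|v(s)\|_{q+1}^{q+1}$ is Lipschitz on $[0, s_0]$, and since $x \mapsto x^{1/(q+1)}$ is Lipschitz on the interval $[c_0^{q+1}, 1]$, the map $s \mapsto \|v(s)\|_{q+1}$ is Lipschitz there as well. Post-composition with the $C^1$ map $t \mapsto s(t)$ then gives Lipschitz continuity of $\gamma$ on $[0, t_0]$.

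For the time-derivative statements I would mollify $v$ in time, apply the classical chain rule $\partial_t[(v^q)_\varepsilon(x, s(t))] = \partial_s(v^q)_\varepsilon(x, s(t)) \cdot s_t(t)$ to the smooth approximants, and pass to the limit in a weak formulation tested against functions in $C_c^\infty(\Omega \times (0, \infty))$. The passage is justified because $\partial_s(v^q)_\varepsilon = (\partial_s v^q)_\varepsilon \to \partial_s v^q$ in $L^2(\Omega_\infty)$ thanks to the global bound~\eqref{energy ineq3 of v}, while the uniform bound $|s_t| \leq 1$ together with the change of variable $s = s(t)$ turns this into convergence of the composed expressions in $L^2(\Omega_\infty)$. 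This identifies the weak time derivative of $t \mapsto v^q(x, s(t))$ as $\partial_s v^q(x, s(t))\cdot s_t(t)$, and the same change of variable gives
\begin{equation*}
\int_0^\infty\!\!\int_\Omega \big(\partial_s v^q(x, s(t))\, s_t\big)^2\, dx\, dt \;=\; \int_0^{S^\ast}\!\!\int_\Omega (\partial_s v^q)^2\, s_t\, dx\, ds \;\leq\; \|\partial_s v^q\|_{L^2(\Omega_\infty)}^2 \;<\; \infty.
\end{equation*}
Finally, $u^q = v^q(x, s(t)) \cdot \gamma(t)^{-q}$ is, on $\Omega_{t_0}$, a product of a function with $L^2$ weak time derivative and a function in $W^{1,\infty}(0, t_0)$ (the Lipschitz continuity of $\gamma^{-q}$ follows from the previous paragraph together with $\gamma \geq c_0 > 0$), so the Leibniz rule produces exactly~\eqref{weak sense eq.}. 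The resulting $L^2(\Omega_{t_0})$ bound is then immediate from~\eqref{MP}, which controls $v^q$, the Lipschitz bound on $\gamma'$, and the $L^2$ estimate just derived. The main subtlety throughout is the chain-rule identification via mollification in the third paragraph; once that is in place, the rest is routine calculus combined with the estimates already contained in Theorem~\ref{our theorem}.
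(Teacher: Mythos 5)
Your proposal is correct and follows essentially the same route as the paper: mollification in time, the classical chain rule for the smooth approximants $\partial_t[(v^q)_\varepsilon(x,s(t))]=\partial_s(v^q)_\varepsilon(x,s(t))\,s_t$, passage to the limit in the weak formulation using $\partial_s(v^q)_\varepsilon\to\partial_s v^q$ in $L^2(\Omega_\infty)$ and the change of variables $s=s(t)$ with $s_t=\gamma^{(q+1)p/n}\in[c_0^{(q+1)p/n},1]$, followed by the Leibniz rule with the Lipschitz function $\gamma^{-q}$. Your derivation of the Lipschitz continuity of $\gamma$ directly from the absolute continuity of $s\mapsto\|v(s)\|_{q+1}^{q+1}$ is a slightly more streamlined version of the paper's mollification argument for $f(s(t))$, but rests on the same ingredients (\eqref{energy eq1 of v}, \eqref{energy ineq2 of v}, \eqref{c0}).
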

\begin{proof}
The first part of this proof deals with $u \in C ([0,\infty); L^{q+1} (\Omega))$. 
From~\eqref{energy eq1 of v} $v\left(x,S^\ast\left(1-e^{-\Lambda(g(t))}\right)\right) \in C([0,\infty); L^{q+1}(\Omega))$ and $\gamma(t) \in C([0,\infty))$  and thus, by the very definition~\eqref{def. of u'}, $u(x,t)\in C([0,\infty); L^{q+1}(\Omega))$.

Note that 
\[
s(t)=S^\ast\left(1-e^{-\Lambda(g(t))}\right) \quad \iff \quad t=(\Lambda\circ g)^{-1}\left(\log\left(\dfrac{S^\ast}{S^\ast-s}\right)\right)
\]
and, by~\eqref{def. of Lambda'} and~\eqref{def. of g'},
\begin{align*}
s_t=\dfrac{ds}{dt}&=S^\ast e^{-\Lambda(g(t))}\dfrac{d}{dt} \Lambda(g(t))\\[2mm]
&=S^\ast e^{-\Lambda(g(t))}\Lambda^\prime(g(t))g^\prime(t)=S^\ast\Lambda^\prime(g(t))=\|v(s)\|_{q+1}^{(q+1)\frac{p}{n}}.
\end{align*}
Thus, by the changing of variable $s=s(t)=S^\ast\left(1-e^{-\Lambda(g(t))})\right)$ and integration by parts, one has for any $\varphi \in C^\infty_0(\Omega_\infty)$, 
\begin{align*}
&\int_0^\infty\!\!\!\int_\Omega u(x,t)\nabla \varphi(x,t)\,dxdt \notag\\[2mm]
&=\int_0^{S^\ast}\frac{v(x,s)}{\|v(s)\|_{q+1}}\nabla \varphi\left(x, (\Lambda\circ g)^{-1}\left(\log\left(\tfrac{S^\ast}{S^\ast-s}\right)\right)\right)\,dx\,\|v(s)\|_{q+1}^{-(q+1)\frac{p}{n}}\,ds\notag\\[2mm]
&=\int_0^{S^\ast}\|v(s)\|_{q+1}^{-1-(q+1)\frac{p}{n}}\left(-\int_\Omega \nabla v(x,s)\varphi\left(x,(\Lambda\circ g)^{-1}\left(\log\left(\tfrac{S^\ast}{S^\ast-s}\right)\right)\right)\,dx\right)\,ds \notag\\[2mm]
&=\int_0^{S^\ast}\!\!\!\int_\Omega \frac{\nabla v(x,s)\varphi\left(x,(\Lambda\circ g)^{-1}\left(\log\left(\tfrac{S^\ast}{S^\ast-s}\right)\right)\right)}{\|v(s)\|_{q+1}}\,dx\,\|v(s)\|_{q+1}^{-(q+1)\frac{p}{n}}ds,
\end{align*}
since $\nabla v \in L^\infty((0,\infty); L^p(\Omega))$ by~\eqref{energy ineq2 of v} and $\supp \varphi \left(x, (\Lambda\circ g)^{-1}\left(\log\left(\frac{S^\ast}{S^\ast-s}\right)\right)\right) \Subset \Omega_{S^\ast}$.
Again, the changing of variable $t=(\Lambda\circ g)^{-1}\left(\log\left(\dfrac{S^\ast}{S^\ast-s}\right)\right)$ yields
\begin{equation*}
\int_0^\infty\!\!\!\int_\Omega u(x,t)\nabla \varphi(x,t)\,dxdt=-\int_0^\infty\!\!\!\int_\Omega  \frac{\nabla v\left(x,s(t)\right)}{\gamma(t)}\varphi(x,t)\,dxdt
\end{equation*}
and thus, there exists a weak derivative $\nabla u(x,t)$ such that, for any nonnegative $t_0< \infty$,
\begin{equation}\label{nabla u weak sense}
\nabla u(x,t) =\frac{\nabla v(x,s(t))}{\gamma(t)} \in L^\infty([0,\infty); L^p(\Omega)).
\end{equation}
\smallskip

In order to prove~\eqref{weak sense eq.} being valid in the weak sense, we verify that $\gamma(t)$ and $v^q\left(x,s(t)\right)$ are weak differentiable in $t$ and their weak derivatives are integrable on $(0, t_0)$ and $\Omega_\infty$, respectively.

Firstly, we show $\gamma(t)$ is weak differentiable in $(0, \infty)$.
Now, set $f(s):=\|v(s)\|_{q+1}^{q+1}$. From~\eqref{energy eq1 of v} $f(s)$ is a locally absolutely continuous function on $s\in [0, \infty)$ and for a.e. $s \in [0,\infty)$, 
\begin{equation}\label{composite, chain rule eq.0}
\frac{d}{ds}f(s)=\frac{d}{ds}\|v(s)\|_{q+1}^{q+1}=-\frac{q+1}{q}\|\nabla v(s)\|_p^p
\end{equation}
and $\frac{d}{ds}f(s)$ is bounded on $(0,\infty)$ by~\eqref{energy ineq2 of v} and thus, $f(s)$ is actually Lipschitz function. Now, $f_h(s)$ and $\frac{d}{ds}f_h(s)$ denote the mollification with respect to time variable $s$ of $f(s)$ and $\frac{d}{ds}f(s)$, respectively.
According to the fundamental property of mollifier, we have, as $h \searrow 0$,
\begin{equation}\label{f convergence}
f_h(s) \to f(s)\quad \textrm{locally uniformly}\quad \textrm{on}\,\,[0,\infty)
\end{equation}
and, for all $r \geq 1$,
\begin{equation}\label{f' convergence}
\frac{d}{ds}f_h(s) \to \frac{d}{ds}f(s) \quad \textrm{strongly}\quad \textrm{in}\,\,L^r(0,\infty).
\end{equation}
By integration by parts we see that, for every $\phi(t) \in C_0^\infty((0,\infty))$, 
\begin{equation}\label{composite, chain rule eq.1}
\int_0^\infty\phi(t)\big[f_h\left(s(t)\right)\big]^\prime\,dt=-\int_0^\infty\phi^\prime(t)f_h\left(s(t)\right)\,dt,
\end{equation}
where $^\prime:=\frac{d}{dt}$. By changing of the variable $s=s(t)=S^\ast\left(1-e^{-\Lambda(g(t))}\right)$, the integration in the left hand side of~\eqref{composite, chain rule eq.1} is computed as
\begin{align*}
\int_0^{\infty}\phi(t)\big[f_h(s(t))\big]^\prime\,dt&=\int_0^\infty\phi(t)\frac{d}{ds}f_h\left(s(t)\right)\underbrace{s_t\,dt}_{=ds} \notag\\
&=\int_0^{S^\ast}\phi\left((\Lambda\circ g)^{-1}\left(\log\left(\tfrac{S^\ast}{S^\ast-s}\right)\right)\right)\frac{d}{ds}f_h(s)\,ds, 
\end{align*}
which converges to 
\begin{equation}\label{composite, chain rule eq.2}
\int_0^{S^\ast}\phi\left((\Lambda\circ g)^{-1}\left(\log\left(\tfrac{S^\ast}{S^\ast-s}\right)\right)\right)\frac{d}{ds}f(s)\,ds =\int_0^\infty\phi(t)\frac{d}{ds}f\left(s(t)\right)s_t\,dt
\end{equation}
as $h \searrow 0$, by~\eqref{f' convergence}.
On the other hand, the integration in the right hand side of~\eqref{composite, chain rule eq.1} converges to
\begin{equation}\label{composite, chain rule eq.3}
-\int_0^\infty\phi^\prime(t)f_h\left(s(t)\right)\,dt\,\,\to\,\, -\int_0^\infty\phi^\prime(t)f\left(s(t)\right)\,dt
\end{equation}
as $h \searrow 0$, by~\eqref{f convergence}. 
From~\eqref{composite, chain rule eq.1},~\eqref{composite, chain rule eq.2} and~\eqref{composite, chain rule eq.3} we obtain
\begin{equation*}
\int_0^\infty\phi(t)\,s_t\frac{d}{ds}f\left(s(t)\right)\,dt=-\int_0^\infty\phi^\prime(t)f\left(s(t)\right)\,dt
\end{equation*}
and thus, there exists a weak derivative $\frac{d}{dt}f(s(t))$ in $(0,\infty)$ such that 
\begin{equation}\label{composite, chain rule eq.4-1}
\frac{d}{dt}f\left(s(t)\right)=s_t\frac{d}{ds}f\left(s(t)\right)
\end{equation}
and therefore, by~\eqref{energy ineq1 of v} and~\eqref{def. of Lambda}, $f\left(s(t)\right)$ is Lipschitz on $[0, \infty)$.
From~\eqref{composite, chain rule eq.4-1}, $\gamma(t)$ is weak differentiable in $(0, t_0)$ and 
\begin{align}\label{composite, chain rule eq.4-2}
\gamma^\prime(t)&=\frac{d}{dt}\left\|v\left(s(t)\right)\right\|_{q+1} \notag\\[2mm]
&=\frac{1}{q+1}\left\|v\left(s(t)\right)\right\|_{q+1}^{-q}\frac{d}{dt}f\left(s(t)\right) \notag\\[2mm]
&=\frac{1}{q+1}\left\|v\left(s(t)\right)\right\|_{q+1}^{-q}\,s_t\frac{d}{ds}f\left(s(t)\right),
\end{align}
because $\gamma^{\frac{1}{q+1}}$ is Lipschitz for $\gamma \geq c_0$.
From~\eqref{energy ineq1 of v},~\eqref{energy ineq2 of v},~\eqref{def. of Lambda},~\eqref{c0},~\eqref{composite, chain rule eq.0} and~\eqref{composite, chain rule eq.4-2}, it follows that, for every $t \in (0, t_0)$,
\begin{align}\label{composite, chain rule eq.4-3}
|\gamma^\prime(t)| %&\leq \frac{c_0^{-q}}{q+1}\|v_0\|_{q+1}^{(q+1)\frac{p}{n}}\bigg|\frac{d}{ds}\|v(s)\|_{q+1}^{q+1} \Big|_{s=\Lambda(t)} \bigg| \notag\\
 \leq \frac{c_0^{-q}}{q}\|u_0\|_{q+1}^{(q+1)\frac{p}{n}}\|\nabla u_0\|_p^p
\end{align}
and thus, $\gamma (t)$ is surely Lipschitz on $[0, t_0]$.
\medskip

Next, we will verify that the weak derivative on time of $v^q\left(x,S^\ast\left(1-e^{-\Lambda(g(t))}\right)\right)$ is in $L^2 (\Omega_\infty)$. From Definition~\ref{def of v} it follows that $\partial_sv^q(x,s) \in L^2(\Omega_\infty)$ and thus, 
\begin{equation}\label{der. vq convergence}
\partial_s(v^q)_\varepsilon \to \partial_s v^q \quad \textrm{strongly}\quad \textrm{in}\,\,L^2(\Omega_\infty).
\end{equation}
Again, by integration by parts we see that, for every $\varphi\in C_0^\infty(\Omega_\infty)$, 
\begin{equation}\label{composite, chain rule eq.6}
\int_0^\infty\!\!\!\int_\Omega\varphi \, \partial_t(v^q)_\varepsilon\,dxdt=-\int_0^\infty\!\!\!\int_\Omega\partial_t\varphi \,(v^q)_\varepsilon\,dxdt.
\end{equation}
By $s=s(t)=S^\ast\left(1-e^{-\Lambda(g(t))}\right)$ and~\eqref{der. vq convergence}, the integration in the left hand side of~\eqref{composite, chain rule eq.6} is computed as
\begin{align*}
\int_0^\infty\!\!\!\int_\Omega \varphi \,\partial_t (v^q)_\varepsilon\,dxdt&=\int_0^\infty\!\!\!\int_\Omega \varphi(x,t)\partial_s(v^q)_\varepsilon\left(x, f\left(s(t)\right)\right) s_t\,dxdt \notag\\
&=\int_0^{S^\ast}\!\!\!\int_\Omega\varphi\left(x,(\Lambda\circ g)^{-1}\left(\log\left(\tfrac{S^\ast}{S^\ast-s}\right)\right)\right)\,\partial_s(v^q)_\varepsilon (x,s)\,dxds,
\end{align*}
which converges to
\begin{align}\label{composite, chain rule eq.7}
&\int_0^{S^\ast}\!\!\!\int_\Omega \varphi\left(x,(\Lambda\circ g)^{-1}\left(\log\left(\tfrac{S^\ast}{S^\ast-s}\right)\right)\right)\,\partial_s(v^q)(x,s)\,dxds \notag\\[2mm]
&\quad \quad \quad =\int_0^\infty\!\!\!\int_\Omega\varphi(x,t)\,\partial_sv^q\left(x, s(t)\right) s_t\,dxdt
\end{align}
as $\varepsilon \searrow 0$.
The integration in the right hand side of~\eqref{composite, chain rule eq.6} converges as 
\begin{equation}\label{composite, chain rule eq.8}
-\int_0^{\infty}\!\!\!\int_\Omega\partial_t\varphi \, (v^q)_\varepsilon\,dxdt\,\,\to\,\, -\int_0^\infty\!\!\!\int_\Omega\partial_t \varphi \,v^q\,dxdt
\end{equation}
as $\varepsilon \searrow 0$, by~\eqref{convergence of v2}. 
By~\eqref{composite, chain rule eq.6},~\eqref{composite, chain rule eq.7} and~\eqref{composite, chain rule eq.8} we have
\begin{equation*}
\int_0^\infty\!\!\!\int_\Omega\varphi(x,t)\partial_sv^q\left(x, s(t)\right) s_t\,dxdt=-\int_0^\infty\!\!\!\int_\Omega\partial_t \varphi \,v^q\,dxdt
\end{equation*}
and thus, there exists a weak derivative $\partial_tv^q \left(x, s(t)\right)$ in $\Omega \times (0, \infty)$ such that 
\begin{equation}\label{composite, chain rule eq.9-1}
\partial_tv^q=\partial_sv^q\left(x, s(t)\right) s_t.
\end{equation}
By~\eqref{energy ineq1 of v},~\eqref{def. of Lambda} and~\eqref{composite, chain rule eq.9-1}
\begin{equation}\label{composite, chain rule eq.9-2}
|\partial_t v^q| \leq \|u_0\|_{q+1}^{(q+1)\frac{p}{n}} \Big|\partial_sv^q\left(x, S^\ast\left(1-e^{-\Lambda(g(t))}\right)\right)\Big|
\end{equation}
and thus, the weak derivative on time of $v^q \left(x, s(t)\right)$ is in $L^2 (\Omega_\infty)$.
\medskip

Note that $\gamma \mapsto \gamma^{-q}$ is locally Lipschitz on $\{\gamma \geq c_0\}$ and $\gamma(t)^{-q}$ is the composite function of $\gamma^{-q}$ with $\gamma(t)$. This together with~\eqref{def. of u},~\eqref{composite, chain rule eq.4-2},~\eqref{composite, chain rule eq.4-3},~\eqref{composite, chain rule eq.9-1} and~\eqref{composite, chain rule eq.9-2} yields that there exists $\partial_t u^q \in L^2 (\Omega_{t_0})$ for any $t_0<\infty$ such that 
\begin{equation*}
\partial_t u^q=\partial_t v^q \cdot \gamma^{-q}-qv^q\gamma^{-q-1}\gamma^\prime(t)
\end{equation*}
in a weak sense, which is the desired result~\eqref{weak sense eq.}.

\end{proof}
\begin{rmk}\normalfont
Let $t_0<\infty$ be any positive number. By the very definition~\eqref{def. of u} of $u$, one has $\|u(t)\|_{q+1}=1$ for $t\in [0,t_0]$ and $0 \leq u\leq c_0^{-1}\|u_0\|_\infty$ for every $(x,t) \in \Omega \times [0,t_0]$ via~\eqref{c0},~\eqref{MP} and the nonnegativity of $v$. Applying the same argument as the proof of~\cite[Proposition 5.2]{Kuusi-Misawa-Nakamura}, we readily get $\lambda(t)=\|\nabla u(t)\|_p^p$ for every $t \in [0,t_0]$ and thus, by~\eqref{c0},~\eqref{nabla u weak sense} and~\eqref{energy ineq2 of v}, $\lambda(t) \leq c_0^{-p}\|\nabla u_0\|_p^p <\infty$. Therefore the following equation holds true in a weak sense,
and almost everywhere in $\Omega_{t_0}$:
\begin{equation}\label{pSt0}
\partial_t u^q-\Delta_pu=\lambda(t)u^q \quad \textrm{in}\,\,\,\Omega_{t_0}
\end{equation}
and thus, it plainly holds that $\Delta_p u \in L^2(\Omega_{t_0})$.
\end{rmk}
\medskip

Here we obtain the interior positivity with $c_0$ in~\eqref{c0}  by the volume constraint.

\begin{prop}[Interior positivity with $c_0$ in~\eqref{c0} by the volume constraint]\label{Interior positivity c0 by the volume constraint}
Let the initial data $u_0 \in W^{1,p}_{0}(\Omega)$ be positive, bounded in $\Omega$ and satisfy $\|u_{0}\|_{q+1}=1$. Let $u$ be a nonnegative weak solution of~\eqref{pSt0}  in $\Omega_{t_0}$ with any positive $t_0<\infty$. Put $\widetilde{M}:=e^{c_0^{-p}\|\nabla u_{0}\|_{p}^{p}t_0/q} \|u_{0}\|_{\infty}$ and let $\Omega^{\prime}$ be a subdomain compactly contained in $\Omega$ satisfying $|\Omega \setminus \Omega^{\prime}| \leq \frac{1}{4\widetilde{M}^{q+1}}$. Then there exists a positive constant $\widetilde{\eta}$ such that %
\[
u(x,t)\geq \widetilde{\eta} L \quad \textrm{in}\quad \Omega^{\prime} \times [0,t_0].
\]
Here $0<L\leq \min \left\{ \left(\frac{1}{4|\Omega^{\prime}|}\right)^{\frac{1}{q+1}}, \inf \limits_{\Omega^{\prime\prime}} u_{0} \right\}$, 
  where $\Omega^{\prime\prime}$ is compactly contained in $\Omega$ and compactly containing $\Omega^\prime$, and  the positive constant $\widetilde{\eta}$ depends only on $p, n, \Omega^{\prime}, M$ and $N$, where $N$ is the number of chain balls of $\Omega^{\prime}$. The constant $\widetilde{\eta}$ also depends on the positive constant $c_0$.
\end{prop}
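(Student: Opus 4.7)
The plan is to follow the proof of Proposition~\ref{Interior positivity by the volume constraint} verbatim, the only difference being that the global upper bound $M$ used there is replaced by the refined bound $\widetilde M$, which is justified thanks to the quantitative lower bound $c_{0}$ of~\eqref{c0} on $\|v(s)\|_{q+1}$.

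First, I would establish the pointwise upper bound $0\le u\le \widetilde M$ on $\Omega_{t_0}$. By the chain rule~\eqref{nabla u weak sense} one has $\nabla u(x,t)=\nabla v(x,s(t))/\gamma(t)$, so combining $\gamma(t)\ge c_0$ from~\eqref{c0} with the energy estimate~\eqref{energy ineq2 of v} gives $\|\nabla u(t)\|_{p}^{p}\le c_{0}^{-p}\|\nabla u_{0}\|_{p}^{p}$ for every $t\in[0,t_0]$. Integrating over $[0,t_0]$ and inserting into~\eqref{eq. Boundedness of pS} of Proposition~\ref{Boundedness of pS} (which applies to~\eqref{pSt0} by the identification $\lambda(t)=\|\nabla u(t)\|_p^p$ in Proposition~\ref{lambda equality}) yields
$$
\|u(t)\|_{\infty}\le e^{c_{0}^{-p}\|\nabla u_{0}\|_{p}^{p}t_{0}/q}\|u_{0}\|_{\infty}=\widetilde M.
$$

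Second, I would use the volume constraint to obtain a density estimate at every $t\in[0,t_0]$. Splitting $\Omega=(\Omega'\cap\{u(t)\ge L\})\cup(\Omega'\cap\{u(t)<L\})\cup(\Omega\setminus\Omega')$ and using the upper bound just derived,
$$
1=\int_{\Omega}u^{q+1}(t)\,dx\le \widetilde M^{q+1}\bigl|\Omega'\cap\{u(t)\ge L\}\bigr|+L^{q+1}|\Omega'|+\widetilde M^{q+1}|\Omega\setminus\Omega'|.
$$
The hypotheses $L^{q+1}|\Omega'|\le 1/4$ and $\widetilde M^{q+1}|\Omega\setminus\Omega'|\le 1/4$ force
$$
\bigl|\Omega'\cap\{u(t)\ge L\}\bigr|\ge \alpha|\Omega'|,\qquad \alpha:=\frac{1}{2\widetilde M^{q+1}|\Omega'|},
$$
which is the starting density needed for the expansion of positivity. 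Next, for $t\in(0,t_0]$ I would cover $\overline{\Omega'}$ by a Harnack chain of balls of radius $\rho\le\mathrm{dist}(\Omega',\partial\Omega)/16$ inside $\Omega''$ and feed this density estimate into Theorem~\ref{ex. thm prime}; by choosing the waiting parameter $\delta_0$ small when $t$ is close to $t_0$ so that the terminal time $t+\delta_0 L^{q+1-p}\rho^p$ equals $t_0$, we obtain $u\ge \eta_{N+1}L$ on $\Omega'$ over a time slab reaching $t_0$. For the initial portion near $t=0$ I would invoke Proposition~\ref{ex. cor prime}, applicable since $u_0>0$ on $\Omega''$, producing a constant $\eta L$ on $\Omega'\times(0,\tau L^{q+1-p}\rho^p)$. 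Taking $\widetilde\eta:=\min\{\eta_{N+1},\eta\}$ covers the whole of $[0,t_0]$.

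The argument is entirely parallel to the existing proof of Proposition~\ref{Interior positivity by the volume constraint}; the only genuinely new ingredient is the derivation of the $\widetilde M$ bound through the $c_{0}$-controlled gradient estimate. The main subtlety to be careful about is making sure $\widetilde\eta$ does not deteriorate as $t$ ranges over $[0,t_0]$: this is precisely the role of the flexible choice of $\delta_{0}$ in Theorem~\ref{ex. thm prime} that allows the expansion-of-positivity time interval to be aligned to end exactly at $t_{0}$, combined with the compactness of $[0,t_0]$ and the $t$-independence of the density $\alpha$ and the ingredients in Theorem~\ref{ex. thm prime}.
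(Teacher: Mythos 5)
Your proposal is correct and follows essentially the same route as the paper, which simply states that the proof is done by the same argument as Proposition~\ref{Interior positivity by the volume constraint}; your derivation of the refined bound $\widetilde{M}$ via $\nabla u = \nabla v/\gamma$, $\gamma \geq c_0$, and~\eqref{energy ineq2 of v} matches the bounds already recorded in the remark preceding the statement. No gaps.
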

%%%
%
%
\begin{proof}
The proof of this proposition is done by the same argument as Proposition~\ref{Interior positivity by the volume constraint}.
\end{proof}
%

%%%%%%%%%%%%%%%%%%%%%%%%%%%%%
%%%%%%%%%%%%%%%%%%%%%%%%%%
%%%%%%%%%%%%%%%%%%%%%%%%%%
\end{appendices}

%%%%%%%%%%%%%%%%%%%%%%%%%%%%

%%%%%%%%%%%%%%%%%%%%%%%%%%%%%%
%%%%%%%%%%%%%%%%%%%%%%%%%%%%%%%%%%%%
%%%%%%%%%%%%%%%%%%%%%%%%%%%%%%%%%%%%

%%%%%address%%%%%%

%%%%%%
 \end{document}